\newtheorem{df}{Definition}
\newtheorem{thm}{Theorem}
\newtheorem{lem}{Lemma}
\newtheorem{rem}{Remark}
\newtheorem{ex}{Example}
\newcommand{\relmiddle}[1]{\mathrel{}\middle#1\mathrel{}}
\numberwithin{equation}{section}
\numberwithin{figure}{section}
\numberwithin{prp}{section}
\numberwithin{df}{section}
\numberwithin{thm}{section}
\numberwithin{lem}{section}
\numberwithin{rem}{section}
\numberwithin{ex}{section}
\begin{document}

\title[primed tableaux and signed unimodal factorizations]
{$\mathfrak{q}$-crystal structure on primed tableaux and on signed unimodal factorizations of reduced words of type $B$}
\author{Toya Hiroshima}
\address{Department of Pure and Applied Mathematics,
Graduate School of Information Science and Technology,
Osaka University,
1-5 Yamadaoka, Suita, Osaka 565-0871, Japan}
\email{t-hiroshima@ist.osaka-u.ac.jp}
\date{}

\begin{abstract}
Crystal basis theory for the queer Lie superalgebra was developed in \cite{GJKKK2,GJKKK3}, where it was shown that semistandard decomposition tableaux admit the structure of crystals for the queer Lie superalgebra or simply $\mathfrak{q}$-crystal structure.
In this paper, we explore the $\mathfrak{q}$-crystal structure of primed tableaux~\cite{HPS} (semistandard marked shifted tableaux~\cite{Cho}) and that of signed unimodal factorizations of reduced words of type $B$~\cite{HPS}.
We give the explicit odd Kashiwara operators on primed tableaux and the forms of the highest and lowest weight vectors.
We clarify the relation between signed unimodal factorizations and the type $B$ Coxeter-Knuth relation of reduced words.
We also give the explicit algorithms for odd Kashiwara operators on signed unimodal factorizations of reduced words of type $B$.
\end{abstract}

\subjclass[2010]{Primary~05E10; Secondary~20G42}
\keywords{queer Lie superalgebras, Kashiwara crystals, primed tableaux, reduced words of type $B$, signed unimodal factorizations}

\maketitle

\section{Introduction}
Several combinatorial models have been proposed and investigated in the study of Schur $P$- or $Q$-functions~\cite{Cho,CK,GJKKK2,GJKKK3,Sag,Ser,Stem}.
Among them, semistandard decomposition tableaux were shown to admit the structure of crystals for the queer Lie superalgebra or simply $\mathfrak{q}$-crystal structure by Grantcharov et al.~\cite{GJKKK2,GJKKK3}.
On the other hand, Hawkes et al.~\cite{HPS} gave a bijection between the set of semistandard unimodal tableaux and that of primed tableaux of the same shape.
The semistandard unimodal tableaux are the same as the semistandard decomposition tableaux originally introduced by Serrano~\cite{Ser} and a bijection between the set of semistandard decomposition tableaux defined in \cite{GJKKK2,GJKKK3} and that defined in \cite{Ser} was established by Choi et al.~\cite{CNO}.
Combining these results, it follows that primed tableaux form a $\mathfrak{q}$-crystal.
Recently, Assaf and O\u{g}uz~\cite{AO1,AO2} constructed the even (or ordinary) Kashiwara operators on semistandard marked shifted tableaux by using Stembridge's local axioms.
They further augmented the even Kashiwara operators they gave to the odd Kashiwara operators and constructed a $\mathfrak{q}$-crystal structure on semistandard marked shifted tableaux~\cite{AO1,AO2}.
We construct the odd Kashiwara operators on primed tableaux along the lines of \cite{HPS}.
That is, we give the odd Kashiwara operators on primed tableaux by translating the $\mathfrak{q}$-crystal structure of words through Serrano's semistandard mixed shifted insertion~\cite{Ser}.
The method of \cite{AO1,AO2} and the one in this paper are completely different but the results are the same. 
By exploiting the $\mathfrak{q}$-crystal structure of primed tableaux thus obtained, we  impose a $\mathfrak{q}$-crystal structure on signed unimodal factorizations~\cite{HPS} (with the fixed number of factors) of reduced words of type $B$.

In more detail, the main results of the present paper are summarized as follows.
Firstly, as in \cite{HPS}, we employ the semistandard mixed shifted insertion to show that the set of primed tableaux forms a $\mathfrak{q}$-crystal and that the odd Kashiwara operators on a primed tableau are independent of a choice of a recording tableau in the above insertion scheme (Theorem~\ref{thm:primed}). 
Secondly, we calculate the highest and lowest weight vectors of primed tableaux by using the rules of ordinary Kashiwara operators on a primed tableau developed in \cite{HPS} (Theorem~\ref{thm:highlow}).
Thirdly, by the $\mathfrak{q}$-crystal structure of primed tableaux thus obtained and by the primed Kra\'{s}kiewicz insertion introduced in \cite{HPS}, 
we show that the set of signed unimodal factorizations (with the fixed number of factors) of type $B$ Coxeter-Knuth related reduced words admits a $\mathfrak{q}$-crystal structure.
Namely, the primed Kra\'{s}kiewicz insertion gives a $\mathfrak{q}$-crystal isomorphism (Theorem~\ref{thm:factorization}).
We also give the explicit rules of odd Kashiwara operators on a signed unimodal factorization of reduced words of type $B$ 
(Theorem~\ref{thm:eF} and \ref{thm:fF}).

The paper is organized as follows.
In Section~\ref{sec:queer}, we review $\mathfrak{q}$-crystals and the model of semistandard decomposition tableaux as a realization of the $\mathfrak{q}$-crystal.
The main combinatorial objects in Section~\ref{sec:primed} are primed tableaux or semistandard marked shifted tableaux.
We give the odd Kashiwara operators on a primed tableau by using the semistandard mixed shifted insertion  
and also give the forms of the highest and lowest weight vectors of primed tableaux.
Section~\ref{sec:unimodal} concerns signed unimodal factorizations of reduced words of type $B$.
We point out that such factorizations with the fixed number of factors form a $\mathfrak{q}$-crystal and give the explicit algorithms for odd Kashiwara operators.

\section{Crystals for the queer Lie superalgebra} \label{sec:queer}

Let us start by briefly sketching crystals for the general linear Lie algebra $\mathfrak{gl}(n)$ or simply $\mathfrak{gl}(n)$-crystals~\cite{BS,HK,Kas2}.
Let 
$P=
\bigoplus _{i=1}^{n}
\mathbb{Z\epsilon}_{i}$ 
be the weight lattice and 
$P^{\vee}=
\bigoplus _{i=1}^{n}
\mathbb{Z}k_{i}$ 
be the dual weight lattice with 
$\left\langle \epsilon_{i},k_{j}\right\rangle =\delta_{ij}$ for ($i,j=1,\ldots,n$).
Let 
$\left\{  \alpha_{i}=\epsilon_{i}-\epsilon_{i+1} \mid 1\leq i <n \right\}  $ be the set of simple roots and 
$\left\{  h_{i}=k_{i}-k_{i+1} \mid 1\leq i <n \right\}  $ be the set of simple coroots.
Let
\[
P^{+}=\left\{  \lambda \mid \lambda\in P,\left\langle \lambda,h_{i}\right\rangle \geq0 \; (i=1,\ldots,n-1) \right\}
\] 
be the set of dominant integral weights.

\begin{df} \label{df:crystal}
A set $B$ together with maps
$\mathrm{wt}:B\rightarrow P$ and 
$\Tilde{e_{i}},\Tilde{f_{i}}:B\rightarrow B\sqcup\{\boldsymbol{0}\}$
is called a $\mathfrak{gl}(n)$-crystal if the following properties are satisfied for $i=1,\ldots,n-1$:
when we define 
\[
\varepsilon_{i}(b)=\max\left\{  k\geq0 \relmiddle| \Tilde{e_{i}}^{k}b\in B\right\},
\]
and
\[
\varphi_{i}(b)=\max\left\{  k\geq0 \relmiddle| \Tilde{f_{i}}^{k}b\in B\right\},
\]
for $b\in B$, then
\begin{itemize}
\item[(1)]
$\varepsilon_{i},\varphi_{i}:B\rightarrow\mathbb{Z}_{\geq0}$ 
and 
$\varphi_{i}(b)=\varepsilon_{i}(b)+
\left\langle \mathrm{wt}(b), h_{i} \right\rangle $,
\item[(2)]
if $\Tilde{e_{i}}b\neq \boldsymbol{0}$, then 
$\mathrm{wt}(\Tilde{e_{i}}b)=\mathrm{wt}(b)+\alpha_{i}$, 
$\varepsilon_{i}(\Tilde{e_{i}}b)=\varepsilon_{i}(b)-1$, and 
$\varphi_{i}(\Tilde{e_{i}}b)=\varphi_{i}(b)+1$,
\item[(3)]
if $\Tilde{f_{i}}b\neq \boldsymbol{0}$, then 
$\mathrm{wt}(\Tilde{f_{i}}b)=\mathrm{wt}(b)-\alpha_{i}$, 
$\varepsilon_{i}(\Tilde{f_{i}}b)=\varepsilon_{i}(b)+1$, and 
$\varphi_{i}(\Tilde{f_{i}}b)=\varphi_{i}(b)-1$,
\item[(4)]
for $b,b^{\prime}\in B$, 
$\Tilde{f_{i}}b=b^{\prime}\Longleftrightarrow\Tilde{e_{i}}b^{\prime}=b$.
\end{itemize}
\end{df}
The maps $\Tilde{e_{i}}$ and $\Tilde{f_{i}}$ are called Kashiwara operators 
and $\mathrm{wt}(b)$ is called the weight of $b$.
In Definition~\ref{df:crystal}, $\boldsymbol{0}$ is a formal symbol; $b=\boldsymbol{0}$ implies $b\notin B$.

\begin{rem}
More precisely, crystals in Definition~\ref{df:crystal} are called semiregular~\cite{HK} or seminormal~\cite{BS}.
Throughout this paper, we consider only semiregular or seminormal crystals.
\end{rem}

\begin{df}[tensor product rule] \label{df:tensor}
Let $B_{1}$ and $B_{2}$ be $\mathfrak{gl}(n)$-crystals.
The tensor product $B_{1}\otimes B_{2}$ is defined to be 
the set 
$B_{1}\times B_{2} = 
\left\{  b_{1}\otimes b_{2} \mid b_{1}\in B_{1},b_{2}\in B_{2}\right\}  $
whose crystal structure is defined by
\begin{itemize}
\item[(1)]
$\mathrm{wt}(b_{1}\otimes b_{2})=\mathrm{wt}(b_{1})+\mathrm{wt}(b_{2})$,
\item[(2)]
$\varepsilon_{i}(b_{1}\otimes b_{2})=
\max\left\{  \varepsilon_{i}(b_{2}),\varepsilon _{i}(b_{1})-\left\langle \mathrm{wt}(b_{2}),h_{i} \right\rangle \right\}$,
\item[(3)]
$\varphi_{i}(b_{1}\otimes b_{2})=
\max\left\{ \varphi_{i}(b_{1}), \varphi_{i}(b_{2})+\left\langle \mathrm{wt}(b_{1}), h_{i} \right\rangle \right\}$,
\item[(4)]
$\Tilde{e_{i}}(b_{1}\otimes b_{2})=
\begin{cases}
\Tilde{e_{i}}b_{1}\otimes b_{2} &  if \varphi_{i}(b_{2})<\varepsilon_{i}(b_{1}), \\
b_{1}\otimes\Tilde{e_{i}}b_{2} &  if \varphi_{i}(b_{2})\geq\varepsilon_{i}(b_{1}),
\end{cases}
$
\item[(5)]
$\Tilde{f_{i}}(b_{1}\otimes b_{2})=
\begin{cases}
\Tilde{f_{i}}b_{1}\otimes b_{2} & if \varphi_{i}(b_{2})\leq\varepsilon_{i}(b_{1}), \\
b_{1}\otimes\Tilde{f_{i}}b_{2} & if \varphi_{i}(b_{2})>\varepsilon_{i}(b_{1}),
\end{cases}
$
\end{itemize}
for $i=1,\ldots,n-1$.
\end{df}

\begin{rem}
Note that this definition is different from the one by the convention of Kashiwara.
Since we use Kashiwara operators on primed tableaux which are constructed by the anti-Kashiwara convention for the tensor product rule~\cite{HPS}, we adopt Definition~\ref{df:tensor}.
\end{rem}

Next, let us describe crystals for the queer Lie superalgebra $\mathfrak{q}(n)$ or simply $\mathfrak{q}(n)$-crystals introduced in \cite{GJKKK2,GJKKK3}.

\begin{df} \label{df:queer}
A $\mathfrak{q}(n)$-crystal is a set $B$ together with the maps $\mathrm{wt}:B\rightarrow P$, 
$\varepsilon_{i}, \varphi_{i}:B\rightarrow\mathbb{Z}_{\geq 0}$ and 
$\Tilde{e}_{i},\Tilde{f}_{i}:B\rightarrow B\sqcup\{\boldsymbol{0}\}$ for 
$i\in I:=\{1,\ldots,n-1,\Bar{1}\}$ 
satisfying the following conditions:

\begin{itemize}
\item[(1)]
$B$ is a $\mathfrak{gl}(n)$-crystal with respect to $\mathrm{wt}$, $\varepsilon_{i}$, $\varphi_{i}$, $\Tilde{e}_{i}$ 
and $\Tilde{f}_{i}$ for $i=1,\ldots,n-1$,
\item[(2)]
$\mathrm{wt}(b)\in
\bigoplus _{i=1}^{n}
\mathbb{Z}_{\geq 0}\epsilon_{i}$ 
for $b\in B,$
\item[(3)] $\mathrm{wt}(\Tilde{e}_{\Bar{1}}b)=\mathrm{wt}(b)+\alpha_{1}$, 
$\mathrm{wt}(\Tilde{f}_{\Bar{1}}b)=\mathrm{wt}(b)-\alpha_{1}$ 
for $b\in B$,
\item[(4)]
$\Tilde{f}_{\Bar{1}}b=b^{\prime}$ if and only if 
$b=\Tilde{e}_{\Bar{1}}b^{\prime}$ for all $b,b^{\prime}\in B$,
\item[(5)] for $3\leq i\leq n-1$, we have 
\begin{itemize}
\item[(i)] the operators $\Tilde{e}_{\Bar{1}}$ and $\Tilde{f}_{\Bar{1}}$ commute with $\Tilde{e}_{i}$ and 
$\Tilde{f}_{i}$,
\item[(ii)] if $\Tilde{e}_{\Bar{1}}b\in B$, then 
$\varepsilon_{i}(\Tilde{e}_{\Bar{1}}b)=\varepsilon_{i}(b)$ and 
$\varphi_{i}(\Tilde{e}_{\Bar{1}}b)=\varphi_{i}(b)$.
\end{itemize}
\end{itemize}
\end{df}

The crystal associated with an irreducible highest weight module $V(\lambda)$ in the category $\mathcal{O}_{int}^{\geq 0}$~\cite{GJKKK1,GJKK} of tensor representations over the $\mathfrak{q}(n)$-quantum group $U_{q}(\mathfrak{q}(n))$ is denoted by $B_{n}(\lambda)$.
We identify the partition $\lambda$ with $\lambda=\sum_{i=1}^{n}\lambda_{i}\epsilon_{i}\in P^{+}$ in a usual way~\cite{HK}.
In $B_{n}(\lambda)$, the relevant partition $\lambda$ is a strict partition $\lambda=(\lambda_{1} > \lambda_{2} > \ldots > \lambda_{l} > \lambda_{l+1}= 0)$, where $l\leq n$.
A crystal $B$ can be viewed as an oriented colored graph with colors $i\in I$ 
when we define $b\stackrel{i}{\longrightarrow} b^{\prime}$ if 
$\Tilde{f_{i}}b=b^{\prime}\; (b,b^{\prime}\in B)$.
This graph is called a crystal graph.
The crystal graph of $B_{n}(\square)$, i.e., $B_{n}(\lambda)$ for $\lambda=\epsilon_{1}\in P^{+}$, is given by
\setlength{\unitlength}{12pt}

\begin{center}
\begin{picture}(11,3)

\put(1.1,1.3){\vector(1,0){1.8}}
\put(1.1,1.7){\vector(1,0){1.8}}
\put(4.1,1.5){\vector(1,0){1.8}}
\put(8.1,1.5){\vector(1,0){1.8}}

\put(6,1){\makebox(2,1){$\cdots$}}

\put(0,1){\framebox(1,1){$1$}}
\put(3,1){\framebox(1,1){$2$}}
\put(10,1){\framebox(1,1){$n$}}

\put(1,0){\makebox(2,1){\small $\Bar{1}$}}
\put(1,2){\makebox(2,1){\small $1$}}
\put(4,2){\makebox(2,1){\small $2$}}
\put(8,2){\makebox(2,1){\small $n-1$}}

\end{picture}.
\end{center}

For $\mathfrak{q}(n)$-crystals $B_{1}$ and $B_{2}$, the tensor product $B_{1}\otimes B_{2}$ is a $\mathfrak{gl}(n)$-crystal, where actions of $\Tilde{e}_{\Bar{1}}$ and $\Tilde{f}_{\Bar{1}}$ on $b_{1}\otimes b_{2}$ ($b_{1}\in B_{1}$, $b_{2}\in B_{2}$) are prescribed as
\[
\Tilde{e}_{\Bar{1}}(b_{1}\otimes b_{2})=
\begin{cases}
b_{1} \otimes \Tilde{e}_{\Bar{1}}b_{2}, & 
if \left\langle \mathrm{wt}(b_{1}), k_{1}\right\rangle =\left\langle \mathrm{wt}(b_{1}), k_{2}\right\rangle=0, \\
\Tilde{e}_{\Bar{1}}b_{1} \otimes b_{2}, & otherwise,
\end{cases}
\]
\[
\Tilde{f}_{\Bar{1}}(b_{1}\otimes b_{2})=
\begin{cases}
b_{1} \otimes \Tilde{f}_{\Bar{1}}b_{2}, & 
if \left\langle \mathrm{wt}(b_{1}), k_{1}\right\rangle =\left\langle \mathrm{wt}(b_{1}), k_{2}\right\rangle=0, \\
\Tilde{f}_{\Bar{1}}b_{1} \otimes b_{2}, & otherwise.
\end{cases}
\]
Under these rules,  $B_{1}\otimes B_{2}$ is also a $\mathfrak{q}(n)$-crystal.

\begin{rem}
These rules are different from the ones given in \cite{CK,GJKKK2,GJKKK3} because we adopt the anti-Kashiwara convention for the tensor product rule.
\end{rem}

Let $B$ be a $\mathfrak{q}(n)$-crystal and suppose that $B$ is in a class of normal $\mathfrak{gl}(n)$-crystal~\cite{BS}, i.e., every connected component in $B$ has a unique highest weight element.
Here, the connected components in $B$ are referred to as the maximal subcrystals of $B$ where all the elements are connected by even Kashiwara operators.
We define the automorphism $S_{i}$ on $B$ by
\[
S_{i}(b)=
\begin{cases}
\Tilde{f}_{i}^{\left\langle \mathrm{wt}(b),h_{i}\right\rangle }b, & 
if \left\langle \mathrm{wt}(b),h_{i}\right\rangle \geq0, \\
\Tilde{e}_{i}^{-\left\langle \mathrm{wt}(b),h_{i}\right\rangle }b, & 
if \left\langle \mathrm{wt}(b),h_{i}\right\rangle <0,
\end{cases}
\]
for $b\in B$ and $i=1,2,\ldots,n-1$.
Let $w$ be an element of Weyl group $W$ of $\mathfrak{gl}(n)$ which is generated by simple reflections $s_{i}$ for $i=1,\ldots,n-1$.
Then, there exists a unique action $S_{w}:B\rightarrow B$ of $W$ on $B$ such that $S_{s_{i}}=S_{i}$ 
for $i=1.\ldots,n-1$~\cite{Kas2}.
Let us set $w_{i}=s_{2}\cdots s_{i}s_{1}\cdots s_{i-1}$.
Then $w_{i}$ is the shortest element in $W$ such that $w_{i}(\alpha_{i})=\alpha_{1}$ 
for $i=2,\ldots,n-1$.
We define new operators by
\[
\Tilde{e}_{\Bar{\imath}} =S_{w_{i}^{-1}}\Tilde{e}_{\Bar{1}}S_{w_{i}},\quad  
\Tilde{f}_{\Bar{\imath}} =S_{w_{i}^{-1}}\Tilde{f}_{\Bar{1}}S_{w_{i}},
\]
where $S_{w_{i}}=S_{2}\cdots S_{i}S_{1}\cdots S_{i-1}$ and similar for $S_{w_{i}^{-1}}$.
These operators together with $\Tilde{e}_{\Bar{1}}$ and $\Tilde{f}_{\Bar{1}}$ are called \emph{odd} Kashiwara operators, while $\Tilde{e}_{i}$ and $\Tilde{f}_{i}$ ($i=1,\ldots,n-1$) are called \emph{even} Kashiwara operators.

\begin{thm}[\cite{GJKKK2,GJKKK3}] 
Let $B_{n}(\lambda)$ be a $\mathfrak{q}(n)$-crystal.
There is a unique element $b\in B_{n}(\lambda)$ such that
$\Tilde{e}_{i}b=\Tilde{e}_{\Bar{\imath}}b=\boldsymbol{0}$ for all $i=1,\ldots,n-1$, 
which is called a \emph{$\mathfrak{q}(n)$-highest weight vector} and 
there is a unique element $b\in B_{n}(\lambda)$ called a \emph{$\mathfrak{q}(n)$-lowest weight vector} such that 
$S_{w_{0}}b$ is a $\mathfrak{q}(n)$-highest weight vector, where $w_{0}$ is the longest element of $W$.
\end{thm}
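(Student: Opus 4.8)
The plan is to produce the highest and lowest weight vectors explicitly and to verify the characterizing properties by a weight argument, isolating the single global fact about $B_{n}(\lambda)$ that is not formal.

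\emph{Highest weight vector.} Since $B_{n}(\lambda)$ is the crystal of the irreducible highest weight $U_{q}(\mathfrak{q}(n))$-module $V(\lambda)$ in $\mathcal{O}_{int}^{\geq 0}$, the weight $\lambda$ occurs in $B_{n}(\lambda)$ with multiplicity one and every weight of $B_{n}(\lambda)$ lies in $\lambda-\sum_{i=1}^{n-1}\mathbb{Z}_{\geq 0}\alpha_{i}$. I would let $b_{\lambda}$ be the unique element with $\mathrm{wt}(b_{\lambda})=\lambda$ and check that every raising operator annihilates it. For $1\leq i\leq n-1$, a nonzero $\Tilde{e}_{i}b_{\lambda}$ would have weight $\lambda+\alpha_{i}$ by Definition~\ref{df:crystal}(2), which does not lie below $\lambda$; the same applies to $\Tilde{e}_{\Bar{1}}b_{\lambda}$ via Definition~\ref{df:queer}(3). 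For $\Tilde{e}_{\Bar{\imath}}=S_{w_{i}^{-1}}\Tilde{e}_{\Bar{1}}S_{w_{i}}$ with $2\leq i\leq n-1$, I would trace the weight through the bijections $S_{w_{i}},S_{w_{i}^{-1}}$: since $S_{w}$ acts on weights as $w$ (a direct consequence of the definition of $S_{i}$) and $w_{i}^{-1}(\alpha_{1})=\alpha_{i}$, a nonzero value would again produce an element of $B_{n}(\lambda)$ of weight $\lambda+\alpha_{i}$, which is impossible. Hence $b_{\lambda}$ is a $\mathfrak{q}(n)$-highest weight vector.

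\emph{Uniqueness (the main obstacle).} Here one must use that $B_{n}(\lambda)$ is the crystal of the \emph{irreducible} module $V(\lambda)=U_{q}(\mathfrak{q}(n))^{-}v_{\lambda}$, equivalently that under the reading-word embedding $B_{n}(\lambda)\hookrightarrow B_{n}(\square)^{\otimes|\lambda|}$ it is realized as the connected component of the image of $b_{\lambda}$; this forces $B_{n}(\lambda)$ to be generated from $b_{\lambda}$ by the lowering operators $\Tilde{f}_{i},\Tilde{f}_{\Bar{\imath}}$ $(1\leq i\leq n-1)$, as in \cite{GJKKK2,GJKKK3}. Granting this, I would take a $\mathfrak{q}(n)$-highest weight vector $b$ and write $b=\Tilde{x}_{j_{m}}\cdots\Tilde{x}_{j_{1}}b_{\lambda}$ with each $\Tilde{x}_{j_{k}}$ a lowering operator and $m$ minimal. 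If $m\geq 1$, then $b=\Tilde{x}_{j_{m}}b'$ with $b'=\Tilde{x}_{j_{m-1}}\cdots\Tilde{x}_{j_{1}}b_{\lambda}\in B_{n}(\lambda)$, and the inversion relations --- Definition~\ref{df:crystal}(4) for even indices, Definition~\ref{df:queer}(4) for $\Bar{1}$, and that relation conjugated by the bijection $S_{w_{i}}$ for $\Bar{\imath}$ --- give a raising operator sending $b$ to $b'\neq\boldsymbol{0}$, contradicting that $b$ is a highest weight vector. Hence $m=0$ and $b=b_{\lambda}$.

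\emph{Lowest weight vector.} Because $B_{n}(\lambda)$ is a normal $\mathfrak{gl}(n)$-crystal (each of its $\mathfrak{gl}(n)$-components is a highest weight crystal), the $W$-action $S_{w}$ is defined on it; in particular each $S_{w}$, and hence $S_{w_{0}}$, is a bijection, and $w_{0}=w_{0}^{-1}$ gives $S_{w_{0}}^{-1}=S_{w_{0}}$. I would set $b^{\mathrm{low}}:=S_{w_{0}}b_{\lambda}$, so that $S_{w_{0}}b^{\mathrm{low}}=b_{\lambda}$ is the $\mathfrak{q}(n)$-highest weight vector; conversely, if $S_{w_{0}}b$ is a $\mathfrak{q}(n)$-highest weight vector then by the uniqueness just proved $S_{w_{0}}b=b_{\lambda}$, whence $b=S_{w_{0}}b_{\lambda}=b^{\mathrm{low}}$. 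This yields existence and uniqueness of the $\mathfrak{q}(n)$-lowest weight vector, whose weight is $w_{0}\lambda$. The only nonformal ingredient in the argument is the generation of $B_{n}(\lambda)$ by $b_{\lambda}$ under lowering operators used in the uniqueness step; an alternative would be to run that step inside the explicit model of semistandard decomposition tableaux reviewed below --- where $b_{\lambda}$ is the tableau whose $k$-th row is $k^{\lambda_{k}}$ --- directly from the explicit even and odd Kashiwara operators, but the abstract route above is shorter.
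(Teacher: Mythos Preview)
The paper does not supply its own proof of this theorem; it is quoted from \cite{GJKKK2,GJKKK3} as background, so there is no in-paper argument to compare against.

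Your outline for existence of the highest weight vector and for the lowest weight vector (via the involution $S_{w_{0}}$) is correct and standard. The weak point is the uniqueness step. You pass from ``$B_{n}(\lambda)$ is the connected component of $b_{\lambda}$ in $B_{n}(\square)^{\otimes|\lambda|}$'' to ``$B_{n}(\lambda)$ is generated from $b_{\lambda}$ by the lowering operators $\Tilde{f}_{i},\Tilde{f}_{\Bar{\imath}}$,'' but connectedness only gives a zigzag path of raisings and lowerings between any two vertices, not a monotone descending path from $b_{\lambda}$. That monotone-generation property is precisely what the argument in \cite{GJKKK2,GJKKK3} establishes (through the tensor-power model and an analysis of the $\mathfrak{gl}(n)$-highest weight elements, the so-called fake highest weight vectors), so citing it here is, as you acknowledge, essentially citing the theorem. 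Your candor about this being the single nonformal input is appropriate; just do not present it as a formal consequence of connectedness.

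One small correction to your closing remark: in the semistandard decomposition tableau model used in this paper (Definition~\ref{df:SSDT}), the $\mathfrak{q}(n)$-highest weight element is \emph{not} the tableau whose $k$-th row is filled with $k$'s. That description is the highest weight in the \emph{primed} tableau model (Theorem~\ref{thm:highlow}); for $\mathrm{SSDT}_{n}(\lambda)$ the highest weight element is the border-strip filling described immediately after the theorem you are proving, and illustrated in the example there.
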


One of the combinatorial models of $\mathfrak{q}(n)$-crystals is the model of semistandard decomposition tableaux~\cite{GJKKK2,GJKKK3}, which we will describe in the rest of this section.
To begin with, we introduce necessary notation and definitions, some of which will be used in subsequent sections.

Let $\mathcal{P}^{+}$ denote the set of strict partitions, 
$\lambda=(\lambda_{1}>\lambda_{2}>\cdots>\lambda_{l}>\lambda_{l+1}=0)$.
For $\lambda\in\mathcal{P}^{+}$, the length $l(\lambda)$ of $\lambda$ is defined as the number of positive parts of $\lambda$ 
and the size of $\lambda$ is defined to be $\left\vert \lambda\right\vert =\sum_{i=1}^{l(\lambda)}\lambda_{i}$.
For $m=\left\vert \lambda \right\vert$, we write $\lambda\vdash m$.
The \emph{shifted diagram} of shape $\lambda\in\mathcal{P}^{+}$ is defined to be the set 
\[
S(\lambda)=\{(i,j)\in\mathbb{N}^{2} \mid i\leq j\leq\lambda_{i}+i-1,1\leq i\leq
l(\lambda)\}.
\]
A filling $T$ of $S(\lambda)$ with letters is called a \emph{shifted tableau} where the entry at $(i,j)$-position is denoted by $T_{i,j}$.

\begin{df} \label{df:SSDT}
\begin{itemize}
\item[(1)]
A word $\boldsymbol{a}=a_{1}\cdots a_{s}$ 
is called a \emph{hook word} if it satisfies 
$a_{1}\geq a_{2}\geq\cdots\geq a_{k}<a_{k+1}<\cdots<a_{s}$ for some $1\leq k\leq s$.
\item[(2)]
For $\lambda\in \mathcal{P}^{+}$, let $T$ be a filling of $S(\lambda)$ with letters from the alphabet $X_{n}:=\{1,2,\ldots,n\}$.
Then, $T$ is called a \emph{semistandard decomposition tableau} of shape $\lambda$ if 
\begin{itemize}
\item[(i)]
$v^{(i)}=T_{i,i}T_{i,i+1}\cdots T_{i,i+\lambda_{i}-1}$ is a hook word of length $\lambda_{i}$ for $i=1,\ldots, l(\lambda)$.
\item[(ii)]
$v^{(i)}$ is a hook subword of maximal length in $v^{(i+1)}v^{(i)}$, the concatenation of $v^{(i+1)}$ and $v^{(i)}$, for 
$i=1,\ldots,l(\lambda)-1$.
\end{itemize}
\end{itemize}
We denote by $\mathrm{SSDT}_{n}(\lambda)$  the set of semistandard decomposition tableaux of shape $\lambda$.
\end{df}

\begin{rem} \label{rem:plethora}
Definition~\ref{df:SSDT} is due to Grantcharov, Jung, Kang, Kashiwara, and Kim~\cite{GJKKK2} and 
is different from the one originally introduced in \cite{Ser} and the one used in \cite{Cho,HPS}.
However, the bijection among these two models is established in \cite{CNO}, where the word and the tableau defined in Definition~\ref{df:SSDT} are called the reverse hook word and the reverse semistandard decomposition tableau, respectively.
The hook word and the semistandard decomposition tableau defined in \cite{Cho,Ser} are also called the weakly unimodal word and the semistandard unimodal tableau, respectively~\cite{HPS}. 
The definition here is more adequate to describe the $\mathfrak{q}(n)$-crystal structure because the $\mathfrak{q}(n)$-highest and lowest weight vectors take simpler forms. 
\end{rem}

We define the \emph{reading word} of $T\in \mathrm{SSDT}_{n}(\lambda)$ as follows:
We read entries through each row from right to left starting from the topmost row to the bottommost one.
We denote by $\mathrm{rw}(T)$ the reading word of $T$.
This definition is opposite to that given in \cite{CK,GJKKK2} because we follow the anti-Kashiwara convention for the tensor product rule.
The weight of $T$ is defined to be $\mathrm{wt}(T)=\sum _{i=1}^{n}a_{i}\epsilon _{i}$, 
where $a_{i}$ is the number of letters $i$ in $\mathrm{rw}(T)$.

Let $\mathrm{rw}(T)=a_{1}a_{2}\cdots a_{m}$ be the reading word of $T\in \mathrm{SSDT}_{n}(\lambda)$.
We identify $T$ with the tensor product,
\begin{center}
\begin{picture}(6,1)

\put(0,0){\framebox(1,1){$a_{1}$}}
\put(5,0){\framebox(1,1){$a_{m}$}}

\put(1,0){\makebox(1,1){$\otimes$}}
\put(4,0){\makebox(1,1){$\otimes$}}
\put(2,0){\makebox(2,1){$\cdots$}}

\end{picture}
\end{center}
by embedding 
\[
\mathrm{SSDT}_{n}(\lambda)\rightarrow B_{n}(\square)^{\otimes \left\vert \lambda \right\vert}.
\]
The $\mathfrak{q}(n)$-crystal structure of $B_{n}(\square)^{\otimes \left\vert \lambda \right\vert}$ is given by the crystal graph of $B_{n}(\square)$ equipped with the tensor product rule.
Thus, $\mathrm{SSDT}_{n}(\lambda)$ is endowed with the $\mathfrak{q}(n)$-crystal structure (Fig.~\ref{fig:SSDT}).
The actual computation of the crystal graph can be done by the bracketing rule on $\mathrm{rw}(T)$ (see Section~\ref{sec:primed}).

\setlength{\unitlength}{10pt}

\begin{figure}

\begin{center}
\begin{picture}(33,26)

\put(1.5,11.8){\vector(0,-1){1.6}}
\put(1.5,15.8){\vector(0,-1){1.6}}

\put(7.5,11.8){\vector(0,-1){1.6}}
\put(7.5,15.8){\vector(0,-1){1.6}}
\put(7.5,19.8){\vector(0,-1){1.6}}

\put(13.5,3.8){\vector(0,-1){1.6}}
\put(13.5,7.8){\vector(0,-1){1.6}}
\put(13.3,11.8){\vector(0,-1){1.6}}
\put(13.7,11.8){\vector(0,-1){1.6}}
\put(13.5,15.8){\vector(0,-1){1.6}}
\put(13.5,19.8){\vector(0,-1){1.6}}
\put(13.5,23.8){\vector(0,-1){1.6}}

\put(19.3,7.8){\vector(0,-1){1.6}}
\put(19.7,7.8){\vector(0,-1){1.6}}
\put(19.5,11.8){\vector(0,-1){1.6}}
\put(19.5,15.8){\vector(0,-1){1.6}}
\put(19.5,19.8){\vector(0,-1){1.6}}

\put(31.5,11.8){\vector(0,-1){1.6}}
\put(31.3,15.8){\vector(0,-1){1.6}}
\put(31.7,15.8){\vector(0,-1){1.6}}

\put(3,8){\vector(3,-2){2.5}}
\put(6,12){\vector(-3,-2){2,5}}
\put(6,15.8){\vector(-3,-2){2,5}}
\put(6,16.2){\vector(-3,-2){2,5}}
\put(6,19.8){\vector(-3,-2){2,5}}
\put(6,20.2){\vector(-3,-2){2,5}}

\put(9,3.8){\vector(3,-2){2.5}}
\put(9,4.2){\vector(3,-2){2.5}}
\put(9,7.8){\vector(3,-2){2.5}}
\put(9,8.2){\vector(3,-2){2.5}}
\put(12,12){\vector(-3,-2){2,5}}
\put(12,24){\vector(-3,-2){2,5}}

\put(15,16){\vector(3,-2){2.5}}
\put(15,20){\vector(3,-2){2.5}}
\put(15,24){\vector(3,-2){2.5}}

\put(21,16){\vector(3,-2){2.5}}
\put(24,12){\vector(-3,-2){2,5}}

\put(27,12){\vector(3,-2){2.5}}

\put(0.5,10.5){\makebox(1,1){\scriptsize $2$}}
\put(0.5,14.5){\makebox(1,1){\scriptsize $2$}}

\put(6.5,10.5){\makebox(1,1){\scriptsize $1$}}
\put(7.5,14.5){\makebox(1,1){\scriptsize $2$}}
\put(7.5,18.5){\makebox(1,1){\scriptsize $2$}}

\put(13.5,2.5){\makebox(1,1){\scriptsize $2$}}
\put(13.5,6.5){\makebox(1,1){\scriptsize $2$}}
\put(12.3,10.5){\makebox(1,1){\scriptsize $1$}}
\put(13.7,10.5){\makebox(1,1){\scriptsize $\Bar{1}$}}
\put(13.5,14.5){\makebox(1,1){\scriptsize $1$}}
\put(13.5,18.5){\makebox(1,1){\scriptsize $1$}}
\put(13.5,22.5){\makebox(1,1){\scriptsize $2$}}

\put(18.3,6.5){\makebox(1,1){\scriptsize $1$}}
\put(19.7,6.5){\makebox(1,1){\scriptsize $\Bar{1}$}}
\put(18.5,10.5){\makebox(1,1){\scriptsize $2$}}
\put(19.5,14.5){\makebox(1,1){\scriptsize $1$}}
\put(19.5,18.5){\makebox(1,1){\scriptsize $2$}}

\put(31.5,10.5){\makebox(1,1){\scriptsize $2$}}
\put(30.3,14.5){\makebox(1,1){\scriptsize $1$}}
\put(31.7,14.5){\makebox(1,1){\scriptsize $\Bar{1}$}}

\put(3.5,5.8){\makebox(1,1){\scriptsize $2$}}
\put(4.5,9.8){\makebox(1,1){\scriptsize $\Bar{1}$}}
\put(4.5,13.6){\makebox(1,1){\scriptsize $\Bar{1}$}}
\put(4.5,15.7){\makebox(1,1){\scriptsize $1$}}
\put(4.5,17.6){\makebox(1,1){\scriptsize $\Bar{1}$}}
\put(4.5,19.7){\makebox(1,1){\scriptsize $1$}}

\put(9.5,1.8){\makebox(1,1){\scriptsize $\Bar{1}$}}
\put(9.5,3.8){\makebox(1,1){\scriptsize $1$}}
\put(9.5,5.8){\makebox(1,1){\scriptsize $\Bar{1}$}}
\put(9.5,7.8){\makebox(1,1){\scriptsize $1$}}
\put(10,11.5){\makebox(1,1){\scriptsize $2$}}
\put(10,23.5){\makebox(1,1){\scriptsize $1$}}

\put(16,15.5){\makebox(1,1){\scriptsize $\Bar{1}$}}
\put(16,19.5){\makebox(1,1){\scriptsize $\Bar{1}$}}
\put(16,23.5){\makebox(1,1){\scriptsize $\Bar{1}$}}

\put(22.5,9.7){\makebox(1,1){\scriptsize $1$}}
\put(22,15.5){\makebox(1,1){\scriptsize $2$}}

\put(28,11.5){\makebox(1,1){\scriptsize $\Bar{1}$}}

\put(1,8){\makebox(1,1){\small $1$}}
\put(0,9){\makebox(1,1){\small $3$}}
\put(1,9){\makebox(1,1){\small $3$}}
\put(2,9){\makebox(1,1){\small $2$}}

\put(1,12){\makebox(1,1){\small $1$}}
\put(0,13){\makebox(1,1){\small $3$}}
\put(1,13){\makebox(1,1){\small $2$}}
\put(2,13){\makebox(1,1){\small $2$}}

\put(1,16){\makebox(1,1){\small $1$}}
\put(0,17){\makebox(1,1){\small $2$}}
\put(1,17){\makebox(1,1){\small $2$}}
\put(2,17){\makebox(1,1){\small $2$}}

\put(7,4){\makebox(1,1){\small $1$}}
\put(6,5){\makebox(1,1){\small $3$}}
\put(7,5){\makebox(1,1){\small $3$}}
\put(8,5){\makebox(1,1){\small $3$}}

\put(7,8){\makebox(1,1){\small $2$}}
\put(6,9){\makebox(1,1){\small $3$}}
\put(7,9){\makebox(1,1){\small $3$}}
\put(8,9){\makebox(1,1){\small $1$}}

\put(7,12){\makebox(1,1){\small $1$}}
\put(6,13){\makebox(1,1){\small $3$}}
\put(7,13){\makebox(1,1){\small $3$}}
\put(8,13){\makebox(1,1){\small $1$}}

\put(7,16){\makebox(1,1){\small $1$}}
\put(6,17){\makebox(1,1){\small $3$}}
\put(7,17){\makebox(1,1){\small $2$}}
\put(8,17){\makebox(1,1){\small $1$}}

\put(7,20){\makebox(1,1){\small $1$}}
\put(6,21){\makebox(1,1){\small $2$}}
\put(7,21){\makebox(1,1){\small $2$}}
\put(8,21){\makebox(1,1){\small $1$}}

\put(13,0){\makebox(1,1){\small $2$}}
\put(12,1){\makebox(1,1){\small $3$}}
\put(13,1){\makebox(1,1){\small $3$}}
\put(14,1){\makebox(1,1){\small $3$}}

\put(13,4){\makebox(1,1){\small $2$}}
\put(12,5){\makebox(1,1){\small $3$}}
\put(13,5){\makebox(1,1){\small $3$}}
\put(14,5){\makebox(1,1){\small $2$}}

\put(13,8){\makebox(1,1){\small $2$}}
\put(12,9){\makebox(1,1){\small $3$}}
\put(13,9){\makebox(1,1){\small $2$}}
\put(14,9){\makebox(1,1){\small $2$}}

\put(13,12){\makebox(1,1){\small $2$}}
\put(12,13){\makebox(1,1){\small $3$}}
\put(13,13){\makebox(1,1){\small $2$}}
\put(14,13){\makebox(1,1){\small $1$}}

\put(13,16){\makebox(1,1){\small $2$}}
\put(12,17){\makebox(1,1){\small $3$}}
\put(13,17){\makebox(1,1){\small $1$}}
\put(14,17){\makebox(1,1){\small $1$}}

\put(13,20){\makebox(1,1){\small $1$}}
\put(12,21){\makebox(1,1){\small $3$}}
\put(13,21){\makebox(1,1){\small $1$}}
\put(14,21){\makebox(1,1){\small $1$}}

\put(13,24){\makebox(1,1){\small $1$}}
\put(12,25){\makebox(1,1){\small $2$}}
\put(13,25){\makebox(1,1){\small $1$}}
\put(14,25){\makebox(1,1){\small $1$}}

\put(19,4){\makebox(1,1){\small $2$}}
\put(18,5){\makebox(1,1){\small $3$}}
\put(19,5){\makebox(1,1){\small $2$}}
\put(20,5){\makebox(1,1){\small $3$}}

\put(19,8){\makebox(1,1){\small $2$}}
\put(18,9){\makebox(1,1){\small $3$}}
\put(19,9){\makebox(1,1){\small $1$}}
\put(20,9){\makebox(1,1){\small $3$}}

\put(19,12){\makebox(1,1){\small $2$}}
\put(18,13){\makebox(1,1){\small $3$}}
\put(19,13){\makebox(1,1){\small $1$}}
\put(20,13){\makebox(1,1){\small $2$}}

\put(19,16){\makebox(1,1){\small $1$}}
\put(18,17){\makebox(1,1){\small $3$}}
\put(19,17){\makebox(1,1){\small $1$}}
\put(20,17){\makebox(1,1){\small $2$}}

\put(19,20){\makebox(1,1){\small $1$}}
\put(18,21){\makebox(1,1){\small $2$}}
\put(19,21){\makebox(1,1){\small $1$}}
\put(20,21){\makebox(1,1){\small $2$}}

\put(25,12){\makebox(1,1){\small $1$}}
\put(24,13){\makebox(1,1){\small $3$}}
\put(25,13){\makebox(1,1){\small $1$}}
\put(26,13){\makebox(1,1){\small $3$}}

\put(31,8){\makebox(1,1){\small $1$}}
\put(30,9){\makebox(1,1){\small $3$}}
\put(31,9){\makebox(1,1){\small $2$}}
\put(32,9){\makebox(1,1){\small $3$}}

\put(31,12){\makebox(1,1){\small $1$}}
\put(30,13){\makebox(1,1){\small $2$}}
\put(31,13){\makebox(1,1){\small $2$}}
\put(32,13){\makebox(1,1){\small $3$}}

\put(31,16){\makebox(1,1){\small $1$}}
\put(30,17){\makebox(1,1){\small $2$}}
\put(31,17){\makebox(1,1){\small $1$}}
\put(32,17){\makebox(1,1){\small $3$}}

\put(0,9){\line(0,1){1}}
\put(1,8){\line(0,1){2}}
\put(2,8){\line(0,1){2}}
\put(3,9){\line(0,1){1}}
\put(1,8){\line(1,0){1}}
\put(0,9){\line(1,0){3}}
\put(0,10){\line(1,0){3}}

\put(0,13){\line(0,1){1}}
\put(1,12){\line(0,1){2}}
\put(2,12){\line(0,1){2}}
\put(3,13){\line(0,1){1}}
\put(1,12){\line(1,0){1}}
\put(0,13){\line(1,0){3}}
\put(0,14){\line(1,0){3}}

\put(0,17){\line(0,1){1}}
\put(1,16){\line(0,1){2}}
\put(2,16){\line(0,1){2}}
\put(3,17){\line(0,1){1}}
\put(1,16){\line(1,0){1}}
\put(0,17){\line(1,0){3}}
\put(0,18){\line(1,0){3}}

\put(6,5){\line(0,1){1}}
\put(7,4){\line(0,1){2}}
\put(8,4){\line(0,1){2}}
\put(9,5){\line(0,1){1}}
\put(7,4){\line(1,0){1}}
\put(6,5){\line(1,0){3}}
\put(6,6){\line(1,0){3}}

\put(6,9){\line(0,1){1}}
\put(7,8){\line(0,1){2}}
\put(8,8){\line(0,1){2}}
\put(9,9){\line(0,1){1}}
\put(7,8){\line(1,0){1}}
\put(6,9){\line(1,0){3}}
\put(6,10){\line(1,0){3}}

\put(6,13){\line(0,1){1}}
\put(7,12){\line(0,1){2}}
\put(8,12){\line(0,1){2}}
\put(9,13){\line(0,1){1}}
\put(7,12){\line(1,0){1}}
\put(6,13){\line(1,0){3}}
\put(6,14){\line(1,0){3}}

\put(6,17){\line(0,1){1}}
\put(7,16){\line(0,1){2}}
\put(8,16){\line(0,1){2}}
\put(9,17){\line(0,1){1}}
\put(7,16){\line(1,0){1}}
\put(6,17){\line(1,0){3}}
\put(6,18){\line(1,0){3}}

\put(6,21){\line(0,1){1}}
\put(7,20){\line(0,1){2}}
\put(8,20){\line(0,1){2}}
\put(9,21){\line(0,1){1}}
\put(7,20){\line(1,0){1}}
\put(6,21){\line(1,0){3}}
\put(6,22){\line(1,0){3}}

\put(12,1){\line(0,1){1}}
\put(13,0){\line(0,1){2}}
\put(14,0){\line(0,1){2}}
\put(15,1){\line(0,1){1}}
\put(13,0){\line(1,0){1}}
\put(12,1){\line(1,0){3}}
\put(12,2){\line(1,0){3}}

\put(12,5){\line(0,1){1}}
\put(13,4){\line(0,1){2}}
\put(14,4){\line(0,1){2}}
\put(15,5){\line(0,1){1}}
\put(13,4){\line(1,0){1}}
\put(12,5){\line(1,0){3}}
\put(12,6){\line(1,0){3}}

\put(12,9){\line(0,1){1}}
\put(13,8){\line(0,1){2}}
\put(14,8){\line(0,1){2}}
\put(15,9){\line(0,1){1}}
\put(13,8){\line(1,0){1}}
\put(12,9){\line(1,0){3}}
\put(12,10){\line(1,0){3}}

\put(12,13){\line(0,1){1}}
\put(13,12){\line(0,1){2}}
\put(14,12){\line(0,1){2}}
\put(15,13){\line(0,1){1}}
\put(13,12){\line(1,0){1}}
\put(12,13){\line(1,0){3}}
\put(12,14){\line(1,0){3}}

\put(12,17){\line(0,1){1}}
\put(13,16){\line(0,1){2}}
\put(14,16){\line(0,1){2}}
\put(15,17){\line(0,1){1}}
\put(13,16){\line(1,0){1}}
\put(12,17){\line(1,0){3}}
\put(12,18){\line(1,0){3}}

\put(12,21){\line(0,1){1}}
\put(13,20){\line(0,1){2}}
\put(14,20){\line(0,1){2}}
\put(15,21){\line(0,1){1}}
\put(13,20){\line(1,0){1}}
\put(12,21){\line(1,0){3}}
\put(12,22){\line(1,0){3}}

\put(12,25){\line(0,1){1}}
\put(13,24){\line(0,1){2}}
\put(14,24){\line(0,1){2}}
\put(15,25){\line(0,1){1}}
\put(13,24){\line(1,0){1}}
\put(12,25){\line(1,0){3}}
\put(12,26){\line(1,0){3}}

\put(18,5){\line(0,1){1}}
\put(19,4){\line(0,1){2}}
\put(20,4){\line(0,1){2}}
\put(21,5){\line(0,1){1}}
\put(19,4){\line(1,0){1}}
\put(18,5){\line(1,0){3}}
\put(18,6){\line(1,0){3}}

\put(18,9){\line(0,1){1}}
\put(19,8){\line(0,1){2}}
\put(20,8){\line(0,1){2}}
\put(21,9){\line(0,1){1}}
\put(19,8){\line(1,0){1}}
\put(18,9){\line(1,0){3}}
\put(18,10){\line(1,0){3}}

\put(18,13){\line(0,1){1}}
\put(19,12){\line(0,1){2}}
\put(20,12){\line(0,1){2}}
\put(21,13){\line(0,1){1}}
\put(19,12){\line(1,0){1}}
\put(18,13){\line(1,0){3}}
\put(18,14){\line(1,0){3}}

\put(18,17){\line(0,1){1}}
\put(19,16){\line(0,1){2}}
\put(20,16){\line(0,1){2}}
\put(21,17){\line(0,1){1}}
\put(19,16){\line(1,0){1}}
\put(18,17){\line(1,0){3}}
\put(18,18){\line(1,0){3}}

\put(18,21){\line(0,1){1}}
\put(19,20){\line(0,1){2}}
\put(20,20){\line(0,1){2}}
\put(21,21){\line(0,1){1}}
\put(19,20){\line(1,0){1}}
\put(18,21){\line(1,0){3}}
\put(18,22){\line(1,0){3}}

\put(24,13){\line(0,1){1}}
\put(25,12){\line(0,1){2}}
\put(26,12){\line(0,1){2}}
\put(27,13){\line(0,1){1}}
\put(25,12){\line(1,0){1}}
\put(24,13){\line(1,0){3}}
\put(24,14){\line(1,0){3}}

\put(30,9){\line(0,1){1}}
\put(31,8){\line(0,1){2}}
\put(32,8){\line(0,1){2}}
\put(33,9){\line(0,1){1}}
\put(31,8){\line(1,0){1}}
\put(30,9){\line(1,0){3}}
\put(30,10){\line(1,0){3}}

\put(30,13){\line(0,1){1}}
\put(31,12){\line(0,1){2}}
\put(32,12){\line(0,1){2}}
\put(33,13){\line(0,1){1}}
\put(31,12){\line(1,0){1}}
\put(30,13){\line(1,0){3}}
\put(30,14){\line(1,0){3}}

\put(30,17){\line(0,1){1}}
\put(31,16){\line(0,1){2}}
\put(32,16){\line(0,1){2}}
\put(33,17){\line(0,1){1}}
\put(31,16){\line(1,0){1}}
\put(30,17){\line(1,0){3}}
\put(30,18){\line(1,0){3}}

\end{picture}
\end{center}
\caption{$\mathfrak{q}$(3)-crystal structure of $\mathrm{SSDT}_{3}(\lambda =(3,1))$.}
 \label{fig:SSDT}
\end{figure}

The $\mathfrak{q}(n)$-highest weight vector in $\mathrm{SSDT}_{n}(\lambda)$ is the semistandard decomposition tableau whose subtableau with entry $l(\lambda)-i+1$ is a connected border strip of size $\lambda_{l(\lambda)-i+1}$ starting at $T_{i,i}$ for $i=l(\lambda),\ldots,1$ and the $\mathfrak{q}(n)$-lowest weight vector is the one where the $i$-th row is filled with entry $n-i+1$ for $i=1,\ldots,l(\lambda)$~\cite{CK,GJKKK2}.

\begin{ex}
Let $n=4$ and $\lambda=(5,3,1)$.
Then the $\mathfrak{q}(n)$-highest weight vector (left) and the $\mathfrak{q}(n)$-lowest weight vector (right) are

\setlength{\unitlength}{12pt}

\begin{center}
\begin{picture}(13,3)

\put(0,2){\line(0,1){1}}
\put(1,1){\line(0,1){2}}
\put(2,0){\line(0,1){3}}
\put(3,0){\line(0,1){3}}
\put(4,1){\line(0,1){2}}
\put(5,2){\line(0,1){1}}
\put(2,0){\line(1,0){1}}
\put(1,1){\line(1,0){3}}
\put(0,2){\line(1,0){5}}
\put(0,3){\line(1,0){5}}

\put(2,0){\makebox(1,1){$1$}}
\put(1,1){\makebox(1,1){$2$}}
\put(2,1){\makebox(1,1){$1$}}
\put(3,1){\makebox(1,1){$1$}}
\put(0,2){\makebox(1,1){$3$}}
\put(1,2){\makebox(1,1){$2$}}
\put(2,2){\makebox(1,1){$2$}}
\put(3,2){\makebox(1,1){$1$}}
\put(4,2){\makebox(1,1){$1$}}

\put(8,2){\line(0,1){1}}
\put(9,1){\line(0,1){2}}
\put(10,0){\line(0,1){3}}
\put(11,0){\line(0,1){3}}
\put(12,1){\line(0,1){2}}
\put(13,2){\line(0,1){1}}
\put(10,0){\line(1,0){1}}
\put(9,1){\line(1,0){3}}
\put(8,2){\line(1,0){5}}
\put(8,3){\line(1,0){5}}

\put(10,0){\makebox(1,1){$2$}}
\put(9,1){\makebox(1,1){$3$}}
\put(10,1){\makebox(1,1){$3$}}
\put(11,1){\makebox(1,1){$3$}}
\put(8,2){\makebox(1,1){$4$}}
\put(9,2){\makebox(1,1){$4$}}
\put(10,2){\makebox(1,1){$4$}}
\put(11,2){\makebox(1,1){$4$}}
\put(12,2){\makebox(1,1){$4$}}

\end{picture}.
\end{center}

\end{ex}

\section{$\mathfrak{q}(n)$-crystal structure on primed tableaux} \label{sec:primed}

We denote by $\mathcal{B}_{n}^{m}$ the set of words consisting of $m$ letters from the alphabet $X_{n}$.
This set is a $\mathfrak{gl}(n)$-crystal and is also a $\mathfrak{q}(n)$-crystal.
The $\mathfrak{gl}(n)$ (or $\mathfrak{q}(n)$)-crystal structure of $\mathcal{B}_{n}^{m}$ is given by 
identifying $\boldsymbol{b}=b_{1}b_{2}\cdots b_{m} \in \mathcal{B}_{n}^{m}$ with 
the tensor product of $\mathfrak{gl}(n)$ (or $\mathfrak{q}(n)$)-crystal $B_{n}(\square)$, 
\setlength{\unitlength}{12pt}

\begin{center}
\begin{picture}(6,1)

\put(0,0){\framebox(1,1){$b_{1}$}}
\put(5,0){\framebox(1,1){$b_{m}$}}

\put(1,0){\makebox(1,1){$\otimes$}}
\put(4,0){\makebox(1,1){$\otimes$}}
\put(2,0){\makebox(2,1){$\cdots$}}

\end{picture}.
\end{center}

In practice, the tensor product rule for actions of $\Tilde{e}_{i}$ and $\Tilde{f}_{i}$ ($i=1,\ldots,n-1$) is rephrased by the following \emph{bracketing rule} on the word $\boldsymbol{b}$:
The Kashiwara operators $\Tilde{e}_{i}$ and $\Tilde{f}_{i}$ depend only on the letters $i$ and $i+1$ in $\boldsymbol{b}$.
We successively bracket any adjacent pairs $(i+1,i)$ and remove these pairs from the word.
We call this procedure the $(i+1,i)$ bracketing. 
If all letters $i+1$ are bracketed, then $\Tilde{e}_{i}\boldsymbol{b}=\boldsymbol{0}$.
Otherwise, $\Tilde{e}_{i}$ changes the leftmost unbracketed letter $i+1$ to $i$.
If all letters $i$ are bracketed, then $\Tilde{f}_{i}\boldsymbol{b}=\boldsymbol{0}$.
Otherwise, $\Tilde{f}_{i}$ changes the rightmost unbracketed letter $i$ to $i+1$.
The rules of action of odd Kashiwara operators $\Tilde{e}_{\Bar{1}}$ and $\Tilde{f}_{\Bar{1}}$ on the word $\boldsymbol{b}$ are quite simple.
If the letter $2$ does not exist in $\boldsymbol{b}$, then $\Tilde{e}_{\Bar{1}}\boldsymbol{b}=\boldsymbol{0}$.
Let $b_{k}$ be the leftmost $2$ in $\boldsymbol{b}$.
If the subword $b_{1}\cdots b_{k-1}$ contains the letter $1$, then $\Tilde{e}_{\Bar{1}}\boldsymbol{b}=\boldsymbol{0}$.
Otherwise, $\Tilde{e}_{\Bar{1}}$ changes $b_{k}=2$ to $1$.
If the letter $1$ does not exist in $\boldsymbol{b}$, then $\Tilde{f}_{\Bar{1}}\boldsymbol{b}=\boldsymbol{0}$.
Let $b_{k}$ be the leftmost $1$ in $\boldsymbol{b}$.
If the subword $b_{1}\cdots b_{k-1}$ contains the letter $2$, then $\Tilde{f}_{\Bar{1}}\boldsymbol{b}=\boldsymbol{0}$.
Otherwise, $\Tilde{f}_{\Bar{1}}$ changes $b_{k}=1$ to $2$.
One can easily check that the conditions (3), (4), and (5) in Definition~\ref{df:queer} are satisfied, 
where the weight of $\boldsymbol{b}\in \mathcal{B}_{n}^{m}$ is $\mathrm{wt}(\boldsymbol{b})=\sum _{i=1}^{n}a_{i}\epsilon _{i}$ 
with $a_{i}$ being the number of letters $i$ in $\boldsymbol{b}$.

As an element of the $\mathfrak{gl}(n)$-crystal, a word $\boldsymbol{b}$ is highest weight, i.e., $\Tilde{e}_{i}\boldsymbol{b}=\boldsymbol{0}$ for all $i=1,2,\ldots,n-1$, if and only if it is a Yamanouchi word~\cite{KN} .
That is, the weight of the subword $b_{k}b_{k+1}\cdots b_{m}$ is a partition for all $k=1,2,\ldots,m$.
However, Yamanouchi words are not always highest weight in a sense of $\mathfrak{q}(n)$-crystals.
For example, the word $\boldsymbol{b}=321121$ is Yamanouchi, but $\Tilde{e}_{\Bar{1}}\boldsymbol{b}=311121\neq\boldsymbol{0}$.

\begin{df}
A shifted tableau $T$ with $n$ boxes is called a \emph{standard shifted Young tableau} if 
\begin{itemize}
\item[(1)]
the entries are all distinct ranging from $1$ to $n$,
\item[(2)]
the numbers in each rows are strictly increasing, and 
\item[(3)]
the numbers in each column are strictly increasing.
\end{itemize}
We denote by $\mathrm{ST}_{n}(\lambda)$ the set of standard shifted Young tableaux with $n$ boxes and shape $\lambda$. 
\end{df}

\begin{df}
A \emph{primed tableau} $T$ of shape $\lambda$ is a filling of $S(\lambda)$ with letters from the alphabet 
$X_{n}^{\prime}:=\{1^{\prime}<1<2^{\prime}<2<\cdots<n\}$ such that: 
\begin{itemize}
\item[(1)] the entries are weakly increasing along each column and each row of $T$,
\item[(2)] each row contains at most one $i^{\prime}$ for every $i=1,\ldots,n$,
\item[(3)] each column contains at most one $i$ for every $i=1,\ldots,n$, and
\item[(4)] there are no primed letters on the main diagonal.
\end{itemize}
We denote by $\mathrm{PT}_{n}(\lambda)$ the set of primed tableaux of shape $\lambda$. 
\end{df}


\begin{rem}
A primed tableau is also called a semistandard marked shifted tableau~\cite{Cho}.
\end{rem}

For a word $\boldsymbol{b}=b_{1}\cdots b_{m}\in \mathcal{B}_{n}^{m}$ in the alphabet $X_{n}$, 
we recursively construct a sequence of pairs of tableaux, 
\[
(\emptyset,\emptyset)=(T^{(0)},Q^{(0)}),(T^{(1)},Q^{(1)}),\ldots,(T^{(m)},Q^{(m)})=(T,Q),
\]
where 
$T^{(i)}\in \mathrm{PT}_{n}(\lambda^{(i)})$ and 
$Q^{(i)}\in \mathrm{ST}_{n}(\lambda^{(i)})$ by the insertion algorithm due to Serrano~\cite{Ser}.
The algorithm to obtain $(T^{(i)},Q^{(i)})$ from $(T^{(i-1)},Q^{(i-1)})$ is as follows:
\begin{itemize}
\item[(1)]
Insert $b_{i}$ into the first row of $T^{(i-1)}$, bumping out the leftmost entry $a$ that is strictly greater than $b_{i}$.
If such an $a$ does not exist, append \framebox[12pt]{$b_{i}$} to the end of the first row of $T^{(i-1)}$ and 
let $T^{(i)}$ be this new tableau.
To get $Q^{(i)}$, we add \framebox[12pt]{$i$} to $Q^{(i-1)}$ such that $T^{(i)}$ and $Q^{(i)}$ have the same shifted shape.
Stop.
If such an $a$ exists, do as follows:

\item[(2)]
\begin{itemize}
\item[(i):]
$a$ is not on the main diagonal.

\begin{itemize}
\item[(i-1)]
If $a$ is not primed, insert it into the next row, bumping out the leftmost entry that is strictly greater than $a$ from that row $R$.

\item[(i-2)]
If $a$ is primed, insert it into the next column to the right, bumping out the topmost entry that is strictly greater than $a$ from that column $C$.
\end{itemize}

\item[(ii):]
$a$ is on the main diagonal.
\begin{itemize}
\item[]
Prime it and insert it into the column to the right, bumping out the topmost entry that is strictly greater than $a$ from that column $C$.
\end{itemize}
\end{itemize}

\item[(3)]
If the bumped element does not exist, append \framebox[12pt]{\vphantom{$i$}$a$} to the end of the row $R$ (in case (i-1)) 
or the column $C$ (in case (i-2) or (ii)), and let $T^{(i)}$ be this new tableau.
To get $Q^{(i)}$, we add \framebox[12pt]{$i$} to $Q^{(i-1)}$ such that $T^{(i)}$ and $Q^{(i)}$ have the same shifted shape.
Stop.
If the bumped element exists, repeat (2) and (3) with $a$ being this bumped element.
\end{itemize}

We call $T$ (resp. $Q$) above the \emph{mixed insertion} (resp. \emph{recording}) tableau.
This algorithm, the \emph{semistandard shifted mixed insertion}, is a semistandard generalization of the mixed insertion introduced by Haiman~\cite{Hai1}.

\begin{ex}
The word $\boldsymbol{b}=333323212$ has the following mixed insertion (left) and recording (right) tableaux.

\setlength{\unitlength}{12pt}

\begin{center}
\begin{picture}(13,3)

\put(0,2){\line(0,1){1}}
\put(1,1){\line(0,1){2}}
\put(2,0){\line(0,1){3}}
\put(3,0){\line(0,1){3}}
\put(4,1){\line(0,1){2}}
\put(5,2){\line(0,1){1}}
\put(2,0){\line(1,0){1}}
\put(1,1){\line(1,0){3}}
\put(0,2){\line(1,0){5}}
\put(0,3){\line(1,0){5}}

\put(2,0){\makebox(1,1){$3$}}
\put(1,1){\makebox(1,1){$2$}}
\put(2,1){\makebox(1,1){$3^{\prime}$}}
\put(3,1){\makebox(1,1){$3$}}
\put(0,2){\makebox(1,1){$1$}}
\put(1,2){\makebox(1,1){$2^{\prime}$}}
\put(2,2){\makebox(1,1){$2$}}
\put(3,2){\makebox(1,1){$3^{\prime}$}}
\put(4,2){\makebox(1,1){$3$}}

\put(8,2){\line(0,1){1}}
\put(9,1){\line(0,1){2}}
\put(10,0){\line(0,1){3}}
\put(11,0){\line(0,1){3}}
\put(12,1){\line(0,1){2}}
\put(13,2){\line(0,1){1}}
\put(10,0){\line(1,0){1}}
\put(9,1){\line(1,0){3}}
\put(8,2){\line(1,0){5}}
\put(8,3){\line(1,0){5}}

\put(10,0){\makebox(1,1){$8$}}
\put(9,1){\makebox(1,1){$5$}}
\put(10,1){\makebox(1,1){$7$}}
\put(11,1){\makebox(1,1){$9$}}
\put(8,2){\makebox(1,1){$1$}}
\put(9,2){\makebox(1,1){$2$}}
\put(10,2){\makebox(1,1){$3$}}
\put(11,2){\makebox(1,1){$4$}}
\put(12,2){\makebox(1,1){$6$}}

\end{picture}.
\end{center}

\end{ex}

\begin{thm}[\cite{HPS,Ser}]
The semistandard shifted mixed insertion gives a bijection 
\begin{equation} \label{eq:HM}
\mathrm{HM}:\mathcal{B}_{n}^{m}\stackrel{\sim}{\longrightarrow}
{\textstyle\bigcup\limits_{\lambda\vdash m}}
\left[  \mathrm{PT}_{n}(\lambda)\times\mathrm{ST}_{n}(\lambda)\right].
\end{equation}
\end{thm}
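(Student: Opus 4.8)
The plan is to prove that $\mathrm{HM}$ is a bijection by exhibiting an explicit inverse --- a reverse insertion procedure --- and checking that the forward and backward procedures undo one another at the level of a single inserted letter. First I would verify that $\mathrm{HM}$ is well defined. Proceeding by induction on $i$, suppose $(T^{(i-1)},Q^{(i-1)})\in \mathrm{PT}_{n}(\lambda^{(i-1)})\times\mathrm{ST}_{n}(\lambda^{(i-1)})$. One must show: (a) the bumping route prescribed by steps (1)--(3) terminates, every bumped entry admitting an insertion into the next row, the next column, or (from the main diagonal) the adjacent column as dictated; (b) the resulting filling $T^{(i)}$ again satisfies conditions (1)--(4) of a primed tableau, the only new feature being that a formerly diagonal entry may appear primed just off the diagonal, which is consistent with the rules; and (c) $Q^{(i)}$, obtained by placing $i$ in the unique new cell, is standard, which is immediate since $i$ exceeds all earlier entries and the new cell sits at the end of its row or column. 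The shape $\lambda^{(i)}$ differs from $\lambda^{(i-1)}$ by a single box, so the target in \eqref{eq:HM} is the correct codomain.

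Next I would set up the \emph{reverse} algorithm. Given a pair $(T,Q)$ whose common shape $\lambda$ has $|\lambda|=m$, the entry $m$ of $Q$ occupies an outer corner cell $c$; one reverse-bumps starting from $T_{c}$, moving against the arrows of (1)--(3): a non-primed entry that arrived by row insertion is pushed up into the preceding row, a primed entry that arrived by column insertion is pushed left into the preceding column, and a primed entry reaching the main diagonal is unprimed and ejected. This terminates by ejecting some $b_{m}\in X_{n}$ from the first row and returns $(T^{(m-1)},Q^{(m-1)})$; iterating reconstructs $\boldsymbol{b}=b_{1}\cdots b_{m}$. Injectivity and surjectivity of $\mathrm{HM}$ then both reduce, by induction on $m$, to the single-step claim that one reverse step inverts one forward step and conversely.

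That single-step claim is the heart of the matter and, I expect, the main obstacle. The key is a ``bumping lemma'': along a forward insertion, consecutive bumped entries occupy cells in strictly controlled relative positions, so that the route (the sequence of cells visited) together with the \emph{mode} at each cell --- row-bump, column-bump, or the diagonal priming transition --- is recoverable from purely local data, namely whether each entry is primed and whether it lies on the main diagonal. The most delicate point is precisely the diagonal transition of case (ii): when a diagonal entry $a$ is primed and pushed into the next column, the reverse procedure, upon following a primed entry back toward the diagonal, must recognize that it should unprime and eject rather than continue as a column bump, and one has to check that this local signal never collides with an ordinary column bump terminating near the diagonal. I would dispatch this by a finite case analysis on the positions of two successive bumped entries --- the argument already underlying Serrano's insertion~\cite{Ser} and Haiman's mixed insertion~\cite{Hai1} --- or simply invoke those references. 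Once the single-step inversion is established, the theorem follows, and one checks along the way that $\mathrm{wt}$ is preserved, i.e.\ the multiset of letters of $\boldsymbol{b}$ equals the content of $T$ (reading a primed $i'$ and an unprimed $i$ each as $i$), which is clear since each insertion step merely relocates letters.
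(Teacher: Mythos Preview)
The paper does not give its own proof of this theorem: it is stated with the citation \cite{HPS,Ser} and no argument follows. So there is no in-paper proof to compare your proposal against; the result is simply imported from Serrano's paper on the shifted plactic monoid (with the standardness of the recording tableau checked in \cite{HPS}).

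That said, your sketch is the standard route and is essentially what one finds in \cite{Ser,Hai1}: verify well-definedness of each bump, build the reverse insertion keyed on the largest entry of $Q$, and reduce everything to a single-step inversion lemma whose only delicate point is the priming transition at the main diagonal. Your identification of that diagonal case as the crux is correct, and invoking Serrano/Haiman for the local analysis is exactly how the cited references handle it. One small caution: in your description of the reverse step you should be a bit more careful about how the mode (row vs.\ column) is detected at the terminal cell~$c$. It is not decided only by whether $T_c$ is primed; rather, one uses whether $c$ lies at the end of its row or at the bottom of its column (and these can coincide, in which case the priming of $T_c$ disambiguates). This is already implicit in Haiman's formulation, but as written your reverse rule could be read as looking only at the priming of $T_c$, which is not quite enough.
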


For $\boldsymbol{b}\in \mathcal{B}_{n}^{m}$, we write $\mathrm{HM}(\boldsymbol{b})=(T,Q)$ and $P_{HM}(\boldsymbol{b})=T$.
By fixing a mixed recording tableau $Q_{\lambda}\in \mathrm{ST}_{n}(\lambda)$, 
one can define a map 
$\Psi _{\lambda}: \mathrm{PT}_{n}(\lambda) \rightarrow \mathcal{B}_{n}^{m}$ as 
$\Psi_{\lambda}(T)=\mathrm{HM}^{-1}(T,Q_{\lambda})$.
Utilizing this map, Hawkes, Paramonov, and Schilling constructed Kashiwara operators $\Tilde{e}_{i}^{P}$ and $\Tilde{f}_{i}^{P}$ on $T\in \mathrm{PT}_{n}(\lambda)$ and 
showed that they are independent of the choice of $Q_{\lambda}$ by giving the explicit algorithms for $\Tilde{e}_{i}^{P}$ and $\Tilde{f}_{i}^{P}$ ($i=1,\ldots,n-1$)~\cite{HPS}.
Since the set of words $\mathcal{B}_{n}^{m}$ admits a $\mathfrak{q}(n)$-crystal structure, $\mathrm{PT}_{n}(\lambda)$ is also a $\mathfrak{q}(n)$-crystal.
This can be also seen by the bijection between the set of semistandard unimodal tableaux of shape $\lambda$ (see Remark~\ref{rem:plethora}), denoted by $\mathrm{SUT}(\lambda)$, and $\mathrm{PT}(\lambda)$~\cite{HPS} and 
by the one between $\mathrm{SUT}(\lambda)$ and $\mathrm{SSDT}(\lambda)$~\cite{CNO}.
In particular, all the elements in $\mathrm{PT}_{n}(\lambda)$ are connected by the even and odd Kashiwara operators.
The odd Kashiwara operators $\Tilde{e}_{\Bar{1}}^{P}$ and $\Tilde{f}_{\Bar{1}}^{P}$ on $\mathrm{PT}_{n}(\lambda)$ are constructed in a similar way as in \cite{HPS}.

\begin{thm} \label{thm:primed}
The set of primed tableaux $\mathrm{PT}_{n}(\lambda)$ admits a $\mathfrak{q}(n)$-crystal structure by the bijection $\mathrm{HM}$ (Eq.~\eqref{eq:HM}) through the $\mathfrak{q}(n)$-crystal structure of $\mathcal{B}_{n}^{m}$.
In particular, odd Kashiwara operators $\Tilde{e}_{\Bar{1}}^{P}$ and $\Tilde{f}_{\Bar{1}}^{P}$ as well as 
even Kashiwara operators $\Tilde{e}_{i}^{P}$ and $\Tilde{f}_{i}^{P}$ ($i=1,\ldots,n-1$) are independent of the choice of $Q_{\lambda}\in \mathrm{ST}_{n}(\lambda)$ in Eq.~\eqref{eq:HM}.
\end{thm}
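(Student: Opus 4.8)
The plan is to refine the bijection $\mathrm{HM}$ of Eq.~\eqref{eq:HM} into a statement about how the $\mathfrak{q}(n)$-crystal operators on $\mathcal{B}_n^m$ interact with Serrano's insertion. It suffices to prove, for each $X\in\{\Tilde{e}_i,\Tilde{f}_i,\Tilde{e}_{\Bar{1}},\Tilde{f}_{\Bar{1}}\}$ ($i=1,\dots,n-1$) and each $\boldsymbol{b}\in\mathcal{B}_n^m$ with $X\boldsymbol{b}\neq\boldsymbol{0}$, writing $\mathrm{HM}(\boldsymbol{b})=(T,Q)$:
\begin{itemize}
\item[(A)] $\mathrm{HM}(X\boldsymbol{b})=(T',Q)$ for some $T'$, i.e.\ the recording tableau is unchanged; and
\item[(B)] $T'=P_{HM}(X\boldsymbol{b})$ depends only on $T$ (and $X$).
\end{itemize}
Indeed, granting (A) for all four operators, for any fixed $Q_\lambda\in\mathrm{ST}_n(\lambda)$ the subset $\Psi_\lambda(\mathrm{PT}_n(\lambda))=\mathrm{HM}^{-1}\!\left(\mathrm{PT}_n(\lambda)\times\{Q_\lambda\}\right)$ of $\mathcal{B}_n^m$ is stable under every $\Tilde{e}_i,\Tilde{f}_i,\Tilde{e}_{\Bar{1}},\Tilde{f}_{\Bar{1}}$, hence is a sub-$\mathfrak{q}(n)$-crystal of $\mathcal{B}_n^m$ (a subset closed under all the operators inherits the axioms of Definition~\ref{df:queer}); transporting along $\Psi_\lambda$ makes $\mathrm{PT}_n(\lambda)$ a $\mathfrak{q}(n)$-crystal, with $\mathrm{wt}(\Psi_\lambda(T))=\mathrm{wt}(T)$ since mixed insertion preserves the multiset of numerical values. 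Statement (B) then says the induced operators $\Tilde{e}_i^P,\Tilde{f}_i^P,\Tilde{e}_{\Bar{1}}^P,\Tilde{f}_{\Bar{1}}^P$ are the same for every choice of $Q_\lambda$. For the even operators $X=\Tilde{e}_i,\Tilde{f}_i$ both (A) and (B) are the content of~\cite{HPS}, whose explicit algorithms refer to $T$ alone; so it remains to treat $X=\Tilde{e}_{\Bar{1}}$, and then $X=\Tilde{f}_{\Bar{1}}$ follows automatically, $\Tilde{f}_{\Bar{1}}$ being the partial inverse of $\Tilde{e}_{\Bar{1}}$ on $\mathcal{B}_n^m$ by Definition~\ref{df:queer}(4).

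The core is a bumping-path analysis. Assume $\Tilde{e}_{\Bar{1}}\boldsymbol{b}\neq\boldsymbol{0}$ and let $b_k$ be the leftmost $2$ in $\boldsymbol{b}$; then the prefix $b_1\cdots b_{k-1}$ contains neither $1$ nor $2$, so every entry of $T^{(k-1)}$ lies in $\{3',3,4',4,\dots\}$. Since $1$ and $2$ are both below every such entry, inserting $b_k$ triggers a cascade dictated by $T^{(k-1)}$ alone that is \emph{identical} whether the inserted letter is $2$ or $1$ (the letter settles at the diagonal cell $(1,1)$, bumping the unprimed entry there, which is then primed and cascades onward). Hence $\lambda^{(k)}$ and $Q^{(k)}$ agree for $\boldsymbol{b}$ and $\Tilde{e}_{\Bar{1}}\boldsymbol{b}$, while $T^{(k)}$ and $T'^{(k)}$ coincide except in one cell, carrying $2$ in the former and $1$ in the latter. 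One then shows by induction on $i=k,\dots,m$ that the shape sequence and the recording tableau stay common to the two insertions, and that $T^{(i)}$ and $T'^{(i)}$ differ in a single cell, where $T^{(i)}$ carries an entry from $\{2',2\}$ and $T'^{(i)}$ the paired entry from $\{1',1\}$. The inductive step is a short case analysis comparing the next letter $b_{i+1}$ with this distinguished entry: either the two insertion paths agree outright, or they diverge for exactly one bumping and then re-merge into a common cascade running through the rows and columns that $T^{(i)}$ and $T'^{(i)}$ share, so that in all cases the same new box is added. This yields (A) for $\Tilde{e}_{\Bar{1}}$.

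For (B) one must describe the distinguished cell and the letter-change intrinsically in terms of $T$. This is exactly the explicit rule for $\Tilde{e}_{\Bar{1}}^P$ on primed tableaux: one verifies that the cell tracked by the induction is the one carrying the leftmost entry of $\{2',2\}$ left unbracketed when the $(2,1)$-bracketing is applied to $\mathrm{rw}(T)$ under the appropriate convention for primed letters, and that this entry is replaced by the corresponding entry of $\{1',1\}$. As this reads off $T$ only, so does $P_{HM}(\Tilde{e}_{\Bar{1}}\boldsymbol{b})$, which gives (B); combined with the first paragraph this proves the theorem. The principal difficulty is the inductive bumping analysis of the previous paragraph --- above all, checking that divergent bumping routes always re-merge without altering the shape sequence --- together with the intrinsic characterization of the distinguished cell, which is precisely where the explicit description of $\Tilde{e}_{\Bar{1}}^P$ on primed tableaux is needed.
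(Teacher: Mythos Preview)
Your overall plan---reduce to (A) recording tableau preserved and (B) an explicit rule on $T$ alone, cite \cite{HPS} for the even operators, and treat only $\Tilde{e}_{\Bar{1}}$---is exactly the paper's strategy. But your execution of (B) contains a genuine error: you propose to identify the distinguished cell via the $(2,1)$-bracketing on $\mathrm{rw}(T)$. Bracketing governs the \emph{even} operators $\Tilde{e}_1,\Tilde{f}_1$; the odd operator $\Tilde{e}_{\Bar{1}}$ on words has no bracketing at all (it acts on the leftmost $2$, conditioned only on there being no $1$ to its left). Accordingly, the intrinsic rule on $T$ turns out to be much simpler than you suggest and involves only the first row: if $T_{1,1}=2$ change it to $1$; if $T_{1,i}=2'$ for some $i\ge 2$ change it to $1$; otherwise $\Tilde{e}_{\Bar{1}}^{P}T=\boldsymbol{0}$. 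In particular the replacement is $2'\mapsto 1$, not $2'\mapsto 1'$ as your ``paired entry'' language hints. Your bracketing description would not recover this rule.

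The paper also avoids your step-by-step bumping induction. Since the prefix $b_1\cdots b_{k-1}$ contains neither $1$ nor $2$, the entire first-row configuration of $P_{HM}(\boldsymbol{b})$ with respect to the letters $1,2',2$ is determined just by the number $i-1$ of $1$'s in $b_{k+1}\cdots b_m$: one gets $1\,1\cdots 1\,2'\,\cdots$ (with the $2'$ in column $i$) when $i\ge 2$, and $2\,x\,\cdots$ with $x\ge 2$ when $i=1$. Replacing $b_k=2$ by $1$ simply increments the count of $1$'s by one, yielding the corresponding first-row configuration for $P_{HM}(\boldsymbol{b}')$; comparing the two gives the rule above directly, without tracking divergent bumping routes. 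The companion analysis for $\Tilde{f}_{\Bar{1}}$ is carried out separately and symmetrically (one needs to argue that the entry to the right of the rightmost $1$ cannot be $2'$, which uses that the prefix has no $2$'s), rather than deduced by inversion. Your inductive route could be made to work, but it is heavier than needed and, as written, lands on the wrong target rule.
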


\begin{proof}
Since even Kashiwara operators were shown to be independent of the choice of $Q_{\lambda}\in \mathrm{ST}_{n}(\lambda)$ in Eq.~\eqref{eq:HM}, 
it remains to prove that odd Kashiwara operators $\Tilde{e}_{\Bar{1}}^{P}$ and $\Tilde{f}_{\Bar{1}}^{P}$ are also independent of the choice of $Q_{\lambda}$.
The claim is the consequence of the following two lemmas (Lemma~\ref{lem:eP} and \ref{lem:fP}).
\end{proof}

The explicit description of the operators $\Tilde{e}_{\Bar{1}}^{P}$ and $\Tilde{f}_{\Bar{1}}^{P}$ resulting from Lemma~\ref{lem:eP} and \ref{lem:fP} is the same as in Definition 4.5 in \cite{AO2}.
Nevertheless, we present the following two lemmas with proofs because the way of construction is completely different from that in \cite{AO1,AO2}.

\begin{lem} \label{lem:eP}
For $T\in\mathrm{PT}_{n}(\lambda)$, the odd Kashiwara operator $\Tilde{e}_{\Bar{1}}^{P}$ on $T$ is given by the following rule: 

If $T_{1,1}=2$, change $T_{1,1}$ to $1$.
If $T_{1,i}=2^{\prime}$ ($i\geq 2$), change $T_{1,i}$ to $1$.
Otherwise, $\Tilde{e}_{\Bar{1}}^{P}T=\boldsymbol{0}$.
\end{lem}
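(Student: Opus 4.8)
The plan is to compute $\Tilde{e}_{\Bar{1}}^{P}T$ directly from its definition, $\Tilde{e}_{\Bar{1}}^{P}T = P_{HM}\bigl(\Tilde{e}_{\Bar{1}}\Psi_{\lambda}(T)\bigr)$, by tracking what the odd operator $\Tilde{e}_{\Bar{1}}$ does to the word $\boldsymbol{b}=\Psi_{\lambda}(T)$ and then re-inserting. Recall the rule for $\Tilde{e}_{\Bar{1}}$ on words: locate the leftmost letter $2$ in $\boldsymbol{b}$; if some letter $1$ occurs strictly to its left, the result is $\boldsymbol{0}$; otherwise change that $2$ to a $1$. So the first step is to understand, for a word $\boldsymbol{b}$ whose mixed-insertion tableau is $T$, where the ``leftmost $2$, with no $1$ before it'' sits and how changing it to $1$ propagates through the insertion. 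The key structural fact I would isolate is: after semistandard shifted mixed insertion, the position of the letter $1$'s and $2$'s (and $2'$'s) in $T$ is governed only by the subword of $\boldsymbol{b}$ on the alphabet $\{1,2\}$, and more precisely the first row of $T$ records the RSK-type bumping of small letters. Concretely, $T_{1,1}=2$ (equivalently: $T$ has no $1$ and no $1'$ anywhere, since the main diagonal has no primes and entries increase along rows/columns) corresponds to $\boldsymbol{b}$ containing a $2$ but no $1$; and $T_{1,i}=2'$ for some $i\ge 2$ with $T_{1,1}\in\{1^{(\prime)}\}\cup\{$non-existent$\}$ — actually $T_{1,1}=1$ — corresponds to $\boldsymbol{b}$ having at least one $1$ and at least one $2$, but with the ``extra'' $2$ bumped to the second column as a $2'$.

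The main argument then splits into the three cases of the statement. \textbf{Case $T_{1,1}=2$:} here $\boldsymbol{b}$ contains no $1$, so the leftmost $2$ has no $1$ before it and $\Tilde{e}_{\Bar{1}}$ changes the leftmost $2$ of $\boldsymbol{b}$ to $1$. I would argue that re-inserting the modified word changes exactly the diagonal entry $T_{1,1}=2$ to $1$ and leaves everything else fixed — the cleanest way is to observe that all instances of $2$ and $2'$ in $T$ come from $2$'s in $\boldsymbol{b}$, the leftmost of which is the one that ends up as the main-diagonal entry (being never bumped further, or bumped only downward into the $(2,2)$-diagonal slot in degenerate shapes); changing it to $1$ shifts it to $T_{1,1}=1$ and, since $1<$ everything, perturbs no subsequent bump. \textbf{Case $T_{1,i}=2'$, $i\ge 2$:} now $\boldsymbol{b}$ has a $1$, hence $T_{1,1}\ne 2$; the hypothesis $T_{1,i}=2'$ for some $i\ge2$ forces (by the column-strictness for unprimed letters and row-weak-increase) that there is a genuine $2$ among the letters inserted but it got kicked into column $i$ as a prime. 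One shows $\Tilde{e}_{\Bar{1}}$ still fires on $\boldsymbol{b}$ (the leftmost $2$ has no $1$ to its left — this needs a short lemma that the mixed-insertion fixes the relative order of the first $1$ and first $2$) and that converting that $2$ to a $1$ in $\boldsymbol{b}$ results precisely in replacing $T_{1,i}=2'$ by $1$, the bumping route being identical up to that box. \textbf{Case otherwise:} if neither holds, then either $T$ contains no $2$ and no $2'$ at all — so $\boldsymbol{b}$ has no $2$ and $\Tilde{e}_{\Bar{1}}\boldsymbol{b}=\boldsymbol{0}$ — or $T$ does contain a $2$ but positioned so that $\boldsymbol{b}$'s leftmost $2$ has a $1$ before it; I would show this ``$1$-before-$2$'' configuration is exactly detected by $T$ not having the special $T_{1,1}=2$ or $T_{1,i}=2'$ form, giving $\Tilde{e}_{\Bar{1}}\boldsymbol{b}=\boldsymbol{0}$, hence $\Tilde{e}_{\Bar{1}}^{P}T=\boldsymbol{0}$.

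The step I expect to be the main obstacle is proving that the semistandard shifted mixed insertion preserves the $\mathfrak{q}(n)$-relevant data on the $\{1,2\}$-subword — in particular that the relative order of the first $1$ and first $2$ in $\boldsymbol{b}$ (which is what $\Tilde{e}_{\Bar{1}}$ keys on) is faithfully encoded in whether $T$ has the forms $T_{1,1}=2$, $T_{1,i}=2'$, or neither. This is essentially a ``small-letters'' compatibility lemma for Serrano's insertion, analogous to the classical fact that RSK first-row dynamics on $\{1,2\}$ mirror the bracketing rule. I would prove it by induction on $m$, carefully analyzing the at-most-one-$1$-per-column and at-most-one-$i'$-per-row constraints and the main-diagonal priming step, to pin down exactly which cell receives the ``unmatched'' $2$. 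Once that lemma is in hand, the three cases above follow by the same insertion-route-tracking argument, and independence of $\Tilde{e}_{\Bar{1}}^{P}$ from $Q_\lambda$ is automatic since the resulting description of $\Tilde{e}_{\Bar{1}}^{P}T$ refers only to $T$ itself. The companion statement for $\Tilde{f}_{\Bar{1}}^{P}$ (Lemma~\ref{lem:fP}) will then follow either by the same method or by the relation $\Tilde{f}_{\Bar{1}}b=b'\iff b=\Tilde{e}_{\Bar{1}}b'$ from Definition~\ref{df:queer}(4).
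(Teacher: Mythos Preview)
Your approach is essentially the paper's: both pass to a word $\boldsymbol{b}$ with $P_{HM}(\boldsymbol{b})=T$, apply $\Tilde{e}_{\Bar{1}}$ there, and track the effect on the first row under mixed insertion. The only difference is that the paper runs the case split from the word side---letting $i-1$ be the number of $1$'s in $\boldsymbol{b}$ (all necessarily to the right of the leftmost $2$) and simply asserting the first-row configuration $[2,x,\ldots]$ or $[1,\ldots,1,2',\ldots]$ in the two cases $i=1$, $i\ge 2$---whereas you split on the form of $T$; your ``small-letters compatibility lemma'' is precisely the content of those asserted configurations, so you have correctly isolated the one step the paper states without further justification.
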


Note that the action of $\Tilde{e}_{\Bar{1}}^{P}$ on $T\in\mathrm{PT}_{n}(\lambda)$ depends only on $T$.

\begin{proof}

Let $\boldsymbol{b}=b_{1}b_{2}\cdots b_{m}\in \mathcal{B}_{n}^{m}$ be the word such that 
$\mathrm{HM}(\boldsymbol{b})=(T,Q_{\lambda})$
($P_{HM}(\boldsymbol{b})=T$ for a fixed $Q_{\lambda}$).
Let $b_{k}$ be the leftmost $2$ in $\boldsymbol{b}$ ($1\leq k\leq m$).
In the following, we assume that the subword $b_{1}b_{2}\ldots b_{k-1}$ does not contain $1$; 
otherwise $\Tilde{e}_{\Bar{1}}\boldsymbol{b}=\boldsymbol{0}$.
Suppose that the number of letters $1$ in the subword $b_{k+1}b_{k+2}\cdots b_{m}$ is $i-1$.
\begin{itemize}
\item[(1):]
$i=1$.
The first row of $T$ has the following configuration.
\setlength{\unitlength}{12pt}

\begin{center}
\begin{picture}(4,1)

\put(0,0){\line(0,1){1}}
\put(1,0){\line(0,1){1}}
\put(2,0){\line(0,1){1}}
\put(0,0){\line(1,0){4}}
\put(0,1){\line(1,0){4}}

\put(0,0){\makebox(1,1){$2$}}
\put(1,0){\makebox(1,1){$x$}}
\put(2,0){\makebox(2,1){$\cdots$}}

\end{picture},
\end{center}
where $x\geq 2$.

\item[(2):]
$i\geq 2$.
The first row of $T$ has the following configuration.
\setlength{\unitlength}{12pt}

\begin{center}
\begin{picture}(7,2)

\put(0,0){\line(0,1){1}}
\put(1,0){\line(0,1){1}}
\put(3,0){\line(0,1){1}}
\put(4,0){\line(0,1){1}}
\put(5,0){\line(0,1){1}}
\put(0,0){\line(1,0){7}}
\put(0,1){\line(1,0){7}}

\put(0,0){\makebox(1,1){$1$}}
\put(1,0){\makebox(2,1){$\cdots$}}
\put(3,0){\makebox(1,1){$1$}}
\put(4,0){\makebox(1,1){$2^{\prime}$}}
\put(5,0){\makebox(2,1){$\cdots$}}

\put(4,1){\makebox(1,1){\small $i$}}
\end{picture}.
\end{center}

\end{itemize}
Since 
$\Tilde{e}_{\Bar{1}}\boldsymbol{b}=b_{1}\cdots b_{k-1}1b_{k+1}\cdots b_{m}=\boldsymbol{b}^{\prime}$, 
the first row of $T^{\prime}=P_{HM}(\boldsymbol{b}^{\prime})$ has the configuration,
\setlength{\unitlength}{12pt}

\begin{center}
\begin{picture}(4,1)

\put(0,0){\line(0,1){1}}
\put(1,0){\line(0,1){1}}
\put(2,0){\line(0,1){1}}
\put(0,0){\line(1,0){4}}
\put(0,1){\line(1,0){4}}

\put(0,0){\makebox(1,1){$1$}}
\put(1,0){\makebox(1,1){$x$}}
\put(2,0){\makebox(2,1){$\cdots$}}

\end{picture}
\end{center}
for Case (1) and 
\setlength{\unitlength}{12pt}

\begin{center}
\begin{picture}(7,2)

\put(0,0){\line(0,1){1}}
\put(1,0){\line(0,1){1}}
\put(3,0){\line(0,1){1}}
\put(4,0){\line(0,1){1}}
\put(5,0){\line(0,1){1}}
\put(0,0){\line(1,0){7}}
\put(0,1){\line(1,0){7}}

\put(0,0){\makebox(1,1){$1$}}
\put(1,0){\makebox(2,1){$\cdots$}}
\put(3,0){\makebox(1,1){$1$}}
\put(4,0){\makebox(1,1){$1$}}
\put(5,0){\makebox(2,1){$\cdots$}}

\put(4,1){\makebox(1,1){\small $i$}}
\end{picture}
\end{center}
for Case (2).
Thus, the proof is complete.

It is clear from the above arguments that the odd Kashiwara operator $\Tilde{e}_{\Bar{1}}^{P}$ does not depend on the choice of the recording tableau $Q_{\lambda}$ in Eq.~\eqref{eq:HM} and that 
$P_{HM}(\Tilde{e}_{\Bar{1}}\boldsymbol{b})=\Tilde{e}_{\Bar{1}}^{P}T$.

\end{proof}

\begin{lem} \label{lem:fP}
For $T\in\mathrm{PT}_{n}(\lambda)$, the odd Kashiwara operator $\Tilde{f}_{\Bar{1}}^{P}$ on $T$ is given by the following rule:

If the first row of $T$ does not contain letters $1$, then $\Tilde{f}_{\Bar{1}}^{P}T=\boldsymbol{0}$.
Let $(1,i)$ be the position of the rightmost $1$ in the first row.
\begin{itemize}
\item[(1):]
$i=1$.
If $T_{1,2}=2^{\prime}$, then $\Tilde{f}_{\Bar{1}}^{P}T=\boldsymbol{0}$.
Otherwise, change $T_{1,1}=1$ to $2$.
\item[(2):]
$i\geq 2$.
If $T_{1,i+1}=2^{\prime}$, then $\Tilde{f}_{\Bar{1}}^{P}T=\boldsymbol{0}$.
Otherwise, change $T_{1,i}=1$ to $2^{\prime}$.
\end{itemize}
\end{lem}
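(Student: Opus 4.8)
The plan is to mirror the proof of Lemma~\ref{lem:eP}: describe the preimage word $\boldsymbol{b}$ under $\mathrm{HM}$ explicitly near its leftmost $1$, apply the word-level rule for $\Tilde{f}_{\Bar{1}}$, and then track how the semistandard shifted mixed insertion records the change in the first row of the insertion tableau. By Theorem~\ref{thm:primed} (whose proof reduces precisely to Lemmas~\ref{lem:eP} and \ref{lem:fP}) it suffices to work with one fixed recording tableau $Q_{\lambda}$; so let $\boldsymbol{b}=b_{1}\cdots b_{m}$ satisfy $\mathrm{HM}(\boldsymbol{b})=(T,Q_{\lambda})$, i.e.\ $P_{HM}(\boldsymbol{b})=T$. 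Recall the word-level recipe for $\Tilde{f}_{\Bar{1}}$: find the leftmost $1$, say $b_{k}$; if $b_{1}\cdots b_{k-1}$ contains a $2$ then $\Tilde{f}_{\Bar{1}}\boldsymbol{b}=\boldsymbol{0}$; otherwise $\Tilde{f}_{\Bar{1}}$ changes $b_{k}$ from $1$ to $2$.

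First I would translate ``$T$ has no $1$ in its first row'' and ``$b_{1}\cdots b_{k-1}$ contains a $2$'' into equivalent statements, and similarly translate the position $i$ of the rightmost $1$ in row~$1$ of $T$. The key combinatorial fact, already implicit in the analysis of Lemma~\ref{lem:eP}, is that the entries $1',1,2',2$ in the first row of $T$ are governed by the relative order, in $\boldsymbol{b}$, of the letters $1$ and $2$ occurring before any larger letter disrupts them; concretely, if $b_{k}$ is the leftmost $1$ of $\boldsymbol{b}$, then $T$ has a $1$ in row~$1$ iff $b_{1}\cdots b_{k-1}$ contains no $2$ (no letter $1'$ can appear in a tableau coming from a word in $X_{n}$, and a letter $1$ in row~$1$ survives exactly when nothing bumps it, i.e.\ when no earlier $2$ created a displacing chain). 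When that holds, I would show that the number $i$ of the rightmost $1$ in row~$1$ equals $1$ plus the number of letters $1$ among $b_{k+1}\cdots b_{m}$ that are inserted before the first letter $\geq 2$ that would bump them — in the two relevant shapes this is just: row~$1$ begins $1\,1\,\cdots\,1$ ($i$ copies) possibly followed by $2'$ or $2$, exactly the two configurations displayed in the proof of Lemma~\ref{lem:eP}. This gives the dichotomy $i=1$ versus $i\geq 2$ and, within each, the sub-dichotomy on whether $T_{1,i+1}=2'$.

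Next I would run the insertion on $\boldsymbol{b}'=\Tilde{f}_{\Bar{1}}\boldsymbol{b}=b_{1}\cdots b_{k-1}\,2\,b_{k+1}\cdots b_{m}$ and compare the first rows of $T=P_{HM}(\boldsymbol{b})$ and $T'=P_{HM}(\boldsymbol{b}')$. The point is that replacing a $1$ by a $2$ at position $k$ only affects the insertion locally: before position $k$ nothing changes; at position $k$, where $\boldsymbol{b}$ inserted a $1$ (landing in row~$1$) $\boldsymbol{b}'$ now inserts a $2$; the subsequent letters $b_{k+1}\cdots b_{m}$ then behave identically on the part of the tableau they touch except for the single cell that held the changed letter. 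Case $(1)$ ($i=1$): row~$1$ of $T$ starts $1\,x$ with $x=2'$ or $x\geq 2$; if $x=2'$ then after inserting $2$ the $2'$ gets bumped and the first row is no longer a valid change mirroring $\Tilde{f}_{\Bar{1}}$ — one checks this forces $\Tilde{f}_{\Bar{1}}\boldsymbol{b}=\boldsymbol 0$ already at the word level (the leftmost $1$ does not survive to row~$1$ in $T'$ consistently), matching $\Tilde{f}_{\Bar{1}}^{P}T=\boldsymbol 0$; otherwise the cell simply changes $1\to 2$. Case $(2)$ ($i\geq 2$): row~$1$ of $T$ reads $1\,\cdots\,1\,x$ with the $i$-th entry a $1$; inserting the $2$ at position $k$ and replaying shows the rightmost $1$ in row~$1$ becomes $2'$ unless $x=T_{1,i+1}=2'$, in which case the $\Tilde{f}_{\Bar{1}}$-chain aborts ($\boldsymbol{0}$). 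Finally I would note, exactly as at the end of Lemma~\ref{lem:eP}, that this description refers only to $T$ and is manifestly independent of $Q_{\lambda}$, and that $P_{HM}(\Tilde{f}_{\Bar{1}}\boldsymbol{b})=\Tilde{f}_{\Bar{1}}^{P}T$.

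The main obstacle I expect is the bookkeeping in the ``otherwise'' sub-cases: verifying that when $T_{1,i+1}=2'$ the word-level operator $\Tilde{f}_{\Bar{1}}$ genuinely returns $\boldsymbol 0$ (equivalently, that a $2$ sits to the left of the leftmost $1$ in $\boldsymbol{b}$), and conversely that when $T_{1,i+1}\neq 2'$ the single-cell change $1\mapsto 2$ or $1\mapsto 2'$ is consistent with all later bumpings and yields a legitimate primed tableau of the same shape. This is the dual of the clean situation in Lemma~\ref{lem:eP}, where $\Tilde{e}_{\Bar{1}}$ never failed after the initial Yamanouchi-type check; here the extra ``$T_{1,i+1}=2'$'' obstruction has to be matched precisely on the two sides, and handling the interaction of the changed letter with the second row (to confirm no column now has two $2$'s and no row two $2'$'s) is where the care is needed. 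I would isolate this as a short sub-lemma about how a single $1\to 2$ substitution in $\boldsymbol{b}$ propagates through the mixed insertion, and otherwise the argument is a direct transcription of the Case~(1)/Case~(2) analysis already established for $\Tilde{e}_{\Bar{1}}^{P}$.
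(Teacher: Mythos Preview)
Your overall strategy---mirror Lemma~\ref{lem:eP} by analyzing the word $\boldsymbol{b}$ near its leftmost $1$ and tracking mixed insertion---is exactly the paper's approach. However, your execution contains a genuine error and misses the paper's key step.

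First, the claim ``$T$ has a $1$ in row~$1$ iff $b_{1}\cdots b_{k-1}$ contains no $2$'' is false. By weight preservation, $T$ contains a $1$ (necessarily in row~$1$, since $1$ is minimal) iff $\boldsymbol{b}$ contains a $1$, regardless of where the $2$'s sit. For instance $\boldsymbol{b}=21$ yields first row $1\,2'$: a $1$ is present even though a $2$ precedes it. The correspondence you actually need is different: with $i$ equal to the total number of $1$'s in $\boldsymbol{b}$, one has $T_{1,i+1}\neq 2'$ iff no $2$ precedes the leftmost $1$ in $\boldsymbol{b}$.

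Second, and more importantly, you invert the logic in the $x=2'$ sub-case. You write that if $x=2'$ then ``one checks this forces $\Tilde f_{\Bar 1}\boldsymbol{b}=\boldsymbol 0$ already at the word level,'' but you give no mechanism for this, and talking about ``inserting $2$'' and seeing what gets bumped presupposes $\boldsymbol{b}'$ exists. The paper argues the contrapositive directly: assume $\Tilde f_{\Bar 1}\boldsymbol{b}\neq\boldsymbol 0$, so $b_{1}\cdots b_{k-1}$ contains neither $1$ nor $2$; then every entry of $T^{(k-1)}$ is $\geq 3$, hence after inserting $b_{k}=1$ one has $T^{(k)}_{1,1}=1$ and $T^{(k)}_{1,2}\geq 3'$. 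Any later letter $2$ that reaches row~$1$ does so via step~(1) of mixed insertion and therefore lands as an \emph{unprimed} $2$ (it replaces the leftmost entry strictly greater than $2$). Thus no $2'$ can ever appear in row~$1$, i.e.\ $x\geq 2$. This single observation is the heart of the proof and disposes of both cases $(1)$ and $(2)$ simultaneously; your projected ``short sub-lemma about how a single $1\to 2$ substitution propagates'' is not needed. Once $x\neq 2'$ is established, comparing the insertions of $\boldsymbol{b}$ and $\boldsymbol{b}'=b_{1}\cdots b_{k-1}2\,b_{k+1}\cdots b_{m}$ is immediate, exactly parallel to Lemma~\ref{lem:eP}.
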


Again, note that the action of $\Tilde{f}_{\Bar{1}}^{P}$ on $T\in\mathrm{PT}_{n}(\lambda)$ depends only on $T$.

\begin{proof}

Let $\boldsymbol{b}=b_{1}b_{2}\cdots b_{m}\in \mathcal{B}_{n}^{m}$ such that $\mathrm{HM}(\boldsymbol{b})=(T,Q_{\lambda})$ 
($P_{HM}(\boldsymbol{b})=T$ for a fixed $Q_{\lambda}$).
Let $b_{k}$ be the leftmost $1$ in $\boldsymbol{b}$ ($1\leq k\leq m$).
In the following, we assume that the subword $b_{1}b_{2}\cdots b_{k-1}$ does not contain $2$; 
otherwise $\Tilde{f}_{\Bar{1}}\boldsymbol{b}=\boldsymbol{0}$ .
Suppose that the number of letters $1$ in the subword $b_{k+1}b_{k+2}\cdots b_{m}$ is $i-1$.
\begin{itemize}
\item[(1):]
$i=1$.
The first row of $T$ has the following configuration.
\setlength{\unitlength}{12pt}

\begin{center}
\begin{picture}(4,1)

\put(0,0){\line(0,1){1}}
\put(1,0){\line(0,1){1}}
\put(2,0){\line(0,1){1}}
\put(0,0){\line(1,0){4}}
\put(0,1){\line(1,0){4}}

\put(0,0){\makebox(1,1){$1$}}
\put(1,0){\makebox(1,1){$x$}}
\put(2,0){\makebox(2,1){$\cdots$}}

\end{picture}.
\end{center}
Here, $x\geq 2$.
This is shown as follows for $k\geq 3$ (for $k=1,2$ this is obvious).
Before inserting $b_{k}=1$, entries $T_{1,1}^{(k-1)}$ (unprimed) and $T_{1,2}^{(k-1)}$ are greater than or equal to $3$ 
because $2\notin \{b_{1},\ldots,b_{k-1}\}$.
Then, after inserting $b_{k}=1$, $T_{1,1}^{(k)}=1$ and $T_{1,2}^{(k)}\geq 3^{\prime}$. 
Subsequent insertion of a letter $2$ (if it exists) changes $T_{1,2}^{(k)}$ to $2$ but not to $2^{\prime}$ due to the step (1) in the semistandard shifted mixed insertion.

\item[(2):]
$i\geq 2$.
The first row of $T$ has the following configuration.
\setlength{\unitlength}{12pt}

\begin{center}
\begin{picture}(7,2)

\put(0,0){\line(0,1){1}}
\put(1,0){\line(0,1){1}}
\put(3,0){\line(0,1){1}}
\put(4,0){\line(0,1){1}}
\put(5,0){\line(0,1){1}}
\put(0,0){\line(1,0){7}}
\put(0,1){\line(1,0){7}}

\put(0,0){\makebox(1,1){$1$}}
\put(1,0){\makebox(2,1){$\cdots$}}
\put(3,0){\makebox(1,1){$1$}}
\put(4,0){\makebox(1,1){$x$}}
\put(5,0){\makebox(2,1){$\cdots$}}

\put(3,1){\makebox(1,1){\small $i$}}
\end{picture}.
\end{center}
Here,  $x\geq 2$ as in (1).
\end{itemize}
In both cases, $x\neq 2^{\prime}$.
Since $\Tilde{f}_{\Bar{1}}\boldsymbol{b}=b_{1}\cdots b_{k-1}2b_{k+1}\cdots b_{m}=\boldsymbol{b}^{\prime}$, 
the first row of $T^{\prime}=P_{HM}(\boldsymbol{b}^{\prime})$ has the configuration,
\setlength{\unitlength}{12pt}

\begin{center}
\begin{picture}(4,1)

\put(0,0){\line(0,1){1}}
\put(1,0){\line(0,1){1}}
\put(2,0){\line(0,1){1}}
\put(0,0){\line(1,0){4}}
\put(0,1){\line(1,0){4}}

\put(0,0){\makebox(1,1){$2$}}
\put(1,0){\makebox(1,1){$x$}}
\put(2,0){\makebox(2,1){$\cdots$}}

\end{picture}
\end{center}
for Case(1) and 
\setlength{\unitlength}{12pt}

\begin{center}
\begin{picture}(7,2)

\put(0,0){\line(0,1){1}}
\put(1,0){\line(0,1){1}}
\put(3,0){\line(0,1){1}}
\put(4,0){\line(0,1){1}}
\put(5,0){\line(0,1){1}}
\put(0,0){\line(1,0){7}}
\put(0,1){\line(1,0){7}}

\put(0,0){\makebox(1,1){$1$}}
\put(1,0){\makebox(2,1){$\cdots$}}
\put(3,0){\makebox(1,1){$2^{\prime}$}}
\put(4,0){\makebox(1,1){$x$}}
\put(5,0){\makebox(2,1){$\cdots$}}

\put(3,1){\makebox(1,1){\small $i$}}
\end{picture}
\end{center}
for Case (2).
In both cases, $x\neq 2^{\prime}$; otherwise $\Tilde{f}_{\Bar{1}}^{P}T \notin \mathrm{PT}_{n}(\lambda)$, i.e., 
$\Tilde{f}_{\Bar{1}}^{P}T=\boldsymbol{0}$.

It is clear from the above arguments that the odd Kashiwara operator 
 $\Tilde{f}_{\Bar{1}}^{P}$ does not depend on the choice of the recording tableau $Q_{\lambda}$ in Eq.~\eqref{eq:HM} and that 
$P_{HM}(\Tilde{f}_{\Bar{1}}\boldsymbol{b})=\Tilde{f}_{\Bar{1}}^{P}T$.

\end{proof}

We will also give the form of the $\mathfrak{q}(n)$-highest weight vector as well as $\mathfrak{q}(n)$-lowest weight vector of $\mathrm{PT}_{n}(\lambda)$ (Theorem~\ref{thm:highlow}).
To construct the $\mathfrak{q}(n)$-highest weight vector, we need the explicit rules of $\Tilde{f}_{i}^{P}$ on $T\in \mathrm{PT}_{n}(\lambda)$ described below~\cite{HPS}.
Another but equivalent rule is given in \cite{AO1,AO2}.

We construct the reading word $\mathrm{rw}(T)$ of $T$ as follows:
We first read primed letters through each column from top to bottom staring from the rightmost column to the leftmost one, 
and remove all the primes. 
Next, we read unprimed letters through each row from left to right starting from the bottommost row to the topmost one.
The weight of $T$ is defined to be $\mathrm{wt}(T)=\sum _{i=1}^{n}a_{i}\epsilon _{i}$, 
where $a_{i}$ is the number of letters $i$ in $\mathrm{rw}(T)$.
We also write $\mathrm{wt}(T)=(a_{1}, a_{2},\ldots,a_{n})$.

We first apply the $(i+1,i)$-bracketing on the reading word $\mathrm{rw}(T)$.
If all letters $i$ are bracketed in $\mathrm{rw}(T)$, then $\Tilde{f}_{i}^{P}T=\boldsymbol{0}$.
Otherwise, the rightmost unbracketed letter $i$ in $\mathrm{rw}(T)$ corresponding to an $i$ or an $i^{\prime}$ in $T$, which we call bold unprimed $i$ or bold primed $i$ respectively.

\begin{itemize}
\item[(1)] If the bold letter $i$ is unprimed, denote the cell it is located in as $x$.
\item[(2)] If the bold letter $i$ is primed, conjugate the tableau $T$ first.
The conjugation of a primed tableau $T$ is obtained by reflecting the tableau over the main diagonal, changing all primed entries $k^{\prime}$ to $k$ and changing all umprimed entries $k$ to $(k+1)^{\prime}$.
Denote the resulting tableau as $T^{\ast}$.
The bold primed $i$ in $T$ is now the bold unprimed $i$ in $T^{\ast}$,
Denote the cell it is locates in as $x$.
\end{itemize}

Given any cell $z$ in a signed primed tableau $T$ (or its conjugation $T^{\ast}$), denote by $c(z)$ the content of $z$ and 
by $z_{E}$ the cell to the right of $z$.
 
The algorithm of action of $\Tilde{f}_{i}^{P}$ on $T$ is as follows:
\begin{itemize}
\item[(1)]
If $c(x_{E})=(i+1)^{\prime}$, change $c(x)$ to $(i+1)^{\prime}$ and change $c(x_{E})$ to $(i+1)$.
\item[(2)]
If $c(x_{E})\neq (i+1)^{\prime}$ or $x_{E}$ is empty, then there is a maximal connected ribbon (expanding in South and West directions) with the following properties:
\begin{itemize}
\item[(i)]
The North-Eastern most box of the ribbon (the tail of the ribbon) is $x$.
\item[(ii)]
The contents of all boxes within a ribbon besides the tail are either $(i+1)^{\prime}$ or $(i+1)$.
\end{itemize}
Denote the South-Western most box of the ribbon (the head) as $x_{H}$.
\begin{itemize}
\item[(2-a)]
If $x_{H}=x$, change $c(x)$ to $(i+1)$.
\item[(2-b)]
If $x_{H}\neq x$ and $x_{H}$ is on the main diagonal (in case of a tableau $T$), change $c(x)$ to $(i+1)^{\prime}$.
\item[(2-c)]
Otherwise, change $c(x)$ to $(i+1)^{\prime}$ and change $c(x_{H})$ to $(i+1)$.
\end{itemize}

\end{itemize}

In the case when the bold $i$ is unprimed, we apply the above rules on $T$ to find $\Tilde{f}_{i}^{P}T$.
In the case when the bold $i$ is primed, we first conjugate $T$ and then apply the above rules on $T^{\ast}$, before reversing the conjugation.

\begin{lem}[\cite{HPS}] \label{lem:Yamanouchi}
As an element of a $\mathfrak{gl}(n)$-crystal $T\in \mathrm{PT}_{n}(\lambda)$ is a highest weight element, i.e., $\Tilde{e}_{i}^{P}T=\boldsymbol{0}$ for all $i=1,\ldots,n-1$, if and only if its reading word $\mathrm{rw}(T)$ is a Yamanouchi word.
\end{lem}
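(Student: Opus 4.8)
\emph{Proof proposal.}
The plan is to read the lemma off from the explicit $(i+1,i)$-bracketing description of the even Kashiwara operators on primed tableaux, together with the bracketing characterization of Yamanouchi words recalled for ordinary words in Section~\ref{sec:primed}. The rule for $\Tilde{f}_{i}^{P}$ displayed above begins by applying the $(i+1,i)$-bracketing to $\mathrm{rw}(T)$ and sets $\Tilde{f}_{i}^{P}T=\boldsymbol{0}$ exactly when every letter $i$ of $\mathrm{rw}(T)$ is bracketed; the dual rule for $\Tilde{e}_{i}^{P}$~\cite{HPS} applies the same $(i+1,i)$-bracketing to $\mathrm{rw}(T)$ and sets $\Tilde{e}_{i}^{P}T=\boldsymbol{0}$ exactly when every letter $i+1$ of $\mathrm{rw}(T)$ is bracketed, an unbracketed letter $i+1$ otherwise marking a cell of $T$ from which the prescribed move (performed on $T$, or on its conjugate when the bold letter is primed) returns an element of $\mathrm{PT}_{n}(\lambda)$. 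Hence, for each fixed $i\in\{1,\ldots,n-1\}$,
\[
\Tilde{e}_{i}^{P}T=\boldsymbol{0}\quad\Longleftrightarrow\quad\text{every letter $i+1$ of $\mathrm{rw}(T)$ is bracketed in its $(i+1,i)$-bracketing.}
\]

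To finish, I would recall from Section~\ref{sec:primed} two facts about an ordinary word $w$: that $\Tilde{e}_{i}w=\boldsymbol{0}$ precisely when every letter $i+1$ of $w$ is bracketed in its $(i+1,i)$-bracketing, and that $\Tilde{e}_{i}w=\boldsymbol{0}$ for all $i=1,\ldots,n-1$ precisely when $w$ is Yamanouchi. Putting these together, for any word $w$ the letters $i+1$ of $w$ are all bracketed in its $(i+1,i)$-bracketing for every $i=1,\ldots,n-1$ if and only if $w$ is Yamanouchi. Applying this with $w=\mathrm{rw}(T)$ and combining the displayed equivalence over all $i=1,\ldots,n-1$ yields exactly the statement of the lemma: $\Tilde{e}_{i}^{P}T=\boldsymbol{0}$ for all $i=1,\ldots,n-1$ if and only if $\mathrm{rw}(T)$ is a Yamanouchi word.

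The bracketing bookkeeping is routine --- in particular the direction ``$\mathrm{rw}(T)$ Yamanouchi $\Rightarrow$ $T$ highest weight'' needs only the ``all bracketed $\Rightarrow\boldsymbol{0}$'' half of the rule. The one place where the combinatorics of primed tableaux actually intervenes, and hence the step I would treat with care, is the converse assertion that an unbracketed letter $i+1$ in $\mathrm{rw}(T)$ forces $\Tilde{e}_{i}^{P}T\neq\boldsymbol{0}$ --- equivalently, that the $\Tilde{e}_{i}^{P}$-algorithm never fails to output a legitimate primed tableau (over its cases: an unprimed bold letter, a primed bold letter treated via conjugation, and the main-diagonal exceptional case). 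Rather than verifying each case by hand I would appeal to the well-definedness of $\Tilde{e}_{i}^{P}$ from \cite{HPS}; alternatively, since $\Psi_{\lambda}$ transports $\Tilde{e}_{i}^{P}$ to $\Tilde{e}_{i}$ on $\mathcal{B}_{n}^{m}$ and the $(i+1,i)$-bracketings of $\mathrm{rw}(T)$ and of $\Psi_{\lambda}(T)$ leave the same number of unbracketed letters $i+1$, the conclusion also follows from the corresponding (obvious) statement for words.
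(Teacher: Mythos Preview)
The paper does not give its own proof of this lemma; it is stated with a citation to \cite{HPS} and used as input for Theorem~\ref{thm:highlow}. So there is nothing to compare against directly.

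Your argument is correct and is exactly the natural way to extract the lemma from the explicit bracketing description of $\Tilde{e}_{i}^{P}$ in \cite{HPS}: once one accepts that $\Tilde{e}_{i}^{P}T=\boldsymbol{0}$ precisely when $\mathrm{rw}(T)$ has no unbracketed $i+1$, the equivalence with the Yamanouchi condition is the standard word-level statement. One small remark: your ``alternatively'' clause at the end is not genuinely an independent route. The assertion that $\mathrm{rw}(T)$ and $\Psi_{\lambda}(T)$ have the same number of unbracketed letters $i+1$ is the statement that $\varepsilon_{i}$ computed from the explicit rule on $\mathrm{rw}(T)$ agrees with $\varepsilon_{i}$ computed via the word model --- and that equality is precisely the compatibility theorem of \cite{HPS} you have already invoked. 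So both options are the same appeal to \cite{HPS}; just cite it once for the well-definedness of $\Tilde{e}_{i}^{P}$ and move on.
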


\begin{thm} \label{thm:highlow}
Let us consider $\mathrm{PT}_{n}(\lambda)$ as a $\mathfrak{q}(n)$-crystal.
The $\mathfrak{q}(n)$-highest weight vector in $\mathrm{PT}_{n}(\lambda)$ is a tableau 
whose $i$-th row is filled with letters $i$ and 
the $\mathfrak{q}(n)$-lowest weight vector in $\mathrm{PT}_{n}(\lambda)$ is a tableau where 
the subtableau with entries $n-l(\lambda)+i$ or $(n-l(\lambda)+i)^{\prime}$ is a connected border strip of size 
$\lambda_{l(\lambda)-i+1}$ starting at $T_{i,i}=n-l(\lambda)+i$ for $i=l(\lambda),\ldots,1$.
\end{thm}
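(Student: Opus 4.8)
The plan is to identify both extreme vectors purely by their weights. By the chain of bijections recalled in Section~\ref{sec:primed} together with Theorem~\ref{thm:primed}, $\mathrm{PT}_{n}(\lambda)$ is isomorphic as a $\mathfrak{q}(n)$-crystal to $B_{n}(\lambda)$, whose character is the Schur $P$-function $P_{\lambda}(x_{1},\ldots,x_{n})$. Since $P_{\lambda}$ is symmetric in $x_{1},\ldots,x_{n}$ and the coefficient of $x^{\lambda}$ in $P_{\lambda}$ is $1$, the weight spaces of $\mathrm{PT}_{n}(\lambda)$ of weight $\lambda$ and of weight $w_{0}\lambda=(0,\ldots,0,\lambda_{l},\ldots,\lambda_{1})$ (where $l:=l(\lambda)$) are both one-dimensional. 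On the other hand, the $\mathfrak{q}(n)$-highest weight vector has weight $\lambda$ by definition of $B_{n}(\lambda)$, and the $\mathfrak{q}(n)$-lowest weight vector $b$ has weight $w_{0}\lambda$: indeed $S_{w_{0}}b$ is the highest weight vector, while $\mathrm{wt}(S_{w}b')=w\,\mathrm{wt}(b')$ for every $w\in W$ because each $S_{i}$ reflects the weight by $s_{i}$. So it will suffice to check that the two tableaux in the statement are genuine primed tableaux with the respective weights.

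For the highest weight vector I would take $T$ to be the tableau whose $i$-th row is filled entirely with the letter $i$ for $i=1,\ldots,l$. That $T\in\mathrm{PT}_{n}(\lambda)$ is immediate: each row is constant hence weakly increasing, each column of $S(\lambda)$ meets a consecutive block of rows so its entries are the distinct letters of that block (hence strictly increasing with no repeated letter, using that $\lambda$ is strict), no primed letter occurs, and the main diagonal carries the unprimed letters $1,\ldots,l$; its weight is visibly $\sum_{i}\lambda_{i}\epsilon_{i}=\lambda$, so by the one-dimensionality above $T$ is the $\mathfrak{q}(n)$-highest weight vector. As the cross-check promised by the phrase ``using the rules of ordinary Kashiwara operators'', one may instead observe that $\mathrm{rw}(T)=l^{\lambda_{l}}(l-1)^{\lambda_{l-1}}\cdots1^{\lambda_{1}}$ is a Yamanouchi word---every suffix has partition weight precisely because $\lambda$ is strict---so $\tilde{e}_{i}^{P}T=\boldsymbol{0}$ for all $i$ by Lemma~\ref{lem:Yamanouchi}, while $\tilde{e}_{\bar{1}}^{P}T=\boldsymbol{0}$ by Lemma~\ref{lem:eP} since $T_{1,1}=1$ and no $2^{\prime}$ occurs in the first row.

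For the lowest weight vector I would first make the description precise. The $l$ connected border strips in the statement are the skew shifted shapes $S((\lambda_{j},\lambda_{j+1},\ldots,\lambda_{l}))\setminus S((\lambda_{j+1},\ldots,\lambda_{l}))$ for $j=1,\ldots,l$: the $j$-th of these is a ribbon of $\lambda_{j}$ cells whose unique main-diagonal cell is $(l-j+1,l-j+1)$, and they tile $S(\lambda)$. They are exactly the ribbons carrying the entry $j$ in the $\mathfrak{q}(n)$-highest weight vector of $\mathrm{SSDT}_{n}(\lambda)$ recalled at the end of Section~\ref{sec:queer}, with entries complemented by $k\mapsto n+1-k$; I assign the letter $n-j+1$ to the $j$-th ribbon. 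The priming is then forced by the axioms: within a ribbon a cell is primed exactly when the cell immediately below it belongs to the same ribbon (otherwise a letter would repeat unprimed in a column), and one checks this is consistent with all four conditions---in particular no primed letter lands on the main diagonal, since a diagonal cell of a ribbon has no cell of the ribbon below it. Counting shows the letter $n-j+1$ occurs exactly $\lambda_{j}$ times, so the weight of the resulting $T'\in\mathrm{PT}_{n}(\lambda)$ is $\sum_{j=1}^{l}\lambda_{j}\epsilon_{n-j+1}=w_{0}\lambda$; by one-dimensionality $T'$ is the $\mathfrak{q}(n)$-lowest weight vector.

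The hard part will be the bookkeeping in the last paragraph: verifying that the forced priming of the ribbon tableau is globally consistent (weak increase along rows and columns across ribbon boundaries, together with the primed/unprimed repetition constraints), for which the known shape of the $\mathfrak{q}(n)$-highest weight vector of $\mathrm{SSDT}_{n}(\lambda)$ is the natural input. The other ingredient is the one-dimensionality of the weight spaces of weights $\lambda$ and $w_{0}\lambda$; this follows from $\operatorname{ch}\mathrm{PT}_{n}(\lambda)=P_{\lambda}$, or, if one prefers to avoid the character, from a direct count---for weight $\lambda$ the column condition forces the first row to be constant $1$ and then one inducts on rows, and the analogous ``outer'' argument for weight $w_{0}\lambda$ forces the letter $n$ to occupy the skew shape $S(\lambda)\setminus S((\lambda_{2},\ldots,\lambda_{l}))$ (a $2\times 2$ block of letters $n$ being incompatible with the axioms) and one inducts again. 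If instead one wants a proof expressed entirely through the explicit even operators of \cite{HPS}, the alternative is to verify $S_{w_{0}}T'=T$ directly, and there the obstacle is purely computational: tracking how each factor $S_{i}$, evaluated through the ribbon moves for $\tilde{e}_{i}^{P}$ and $\tilde{f}_{i}^{P}$, successively reshapes the ribbons of $T'$.
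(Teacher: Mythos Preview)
Your argument is correct and is genuinely different from the paper's. You identify both extreme vectors by the one--dimensionality of the weight spaces $\lambda$ and $w_{0}\lambda$ in $\mathrm{PT}_{n}(\lambda)\cong B_{n}(\lambda)$, so that it suffices to exhibit primed tableaux of those weights. The paper instead verifies the highest--weight conditions directly: for the candidate $T$ (row $i$ filled with $i$'s) it checks $\tilde{e}_{i}^{P}T=\boldsymbol{0}$ via Lemma~\ref{lem:Yamanouchi} and $\tilde{e}_{\bar{1}}^{P}T=\boldsymbol{0}$ via Lemma~\ref{lem:eP}, and then, for each $i\ge 2$, carries out an explicit inductive computation of the tableau $S_{w_{i}}T$ through the rules for $\tilde{f}_{k}^{P}$, finally observing that the first row of $S_{w_{i}}T$ contains neither $2'$ nor $2$, whence $\tilde{e}_{\bar{\imath}}^{P}T=\boldsymbol{0}$. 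For the lowest weight vector the paper tracks $\mathrm{wt}(S_{w_{0}}T')$ through the full chain of $S_{k}$'s and concludes from $\mathrm{wt}(S_{w_{0}}T')=\lambda$ that $S_{w_{0}}T'$ is the highest weight tableau; implicitly this last step is your one--dimensionality.

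Your route is considerably shorter and more conceptual, at the cost of invoking $\operatorname{ch}\mathrm{PT}_{n}(\lambda)=P_{\lambda}$ (or your direct multiplicity count) and the identification with $B_{n}(\lambda)$ via $\mathrm{SSDT}_{n}(\lambda)$. The paper's route is heavier but has the virtue of being self--contained within the operator rules developed there: it literally exhibits why each $\tilde{e}_{\bar{\imath}}^{P}$ annihilates $T$, which is informative in its own right. Note that your ``cross--check'' paragraph only reproduces the easy half of the paper's highest--weight argument ($i\in\{1,\ldots,n-1,\bar{1}\}$); it does not by itself give $\tilde{e}_{\bar{\imath}}^{P}T=\boldsymbol{0}$ for $i\ge 2$, which is precisely where the paper spends its effort---but your main argument does not need this.
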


Note that the entries in the subtableau in the $\mathfrak{q}(n)$-lowest weight vector are uniquely determined.

\begin{proof}

Let $T\in \mathrm{PT}_{n}(\lambda)$ be a primed tableau whose $i$-th row is filled with letters $i$ ($i=1.\ldots,l(\lambda)$).
It is clear by Lemma~\ref{lem:Yamanouchi} that $\Tilde{e}_{i}^{P}T=\boldsymbol{0}$ ($i=1,\ldots,n-1$) because the reading word $\mathrm{rw}(T)$ is a Yamanouchi word.
It is also obvious that $\Tilde{e}_{\Bar{1}}^{P}T=\boldsymbol{0}$ by Lemma~\ref{lem:eP}.

Let $i\in \{2,\ldots,n-1\}$ and let us write $\mathrm{wt}(T)=(\mu_{1},\mu_{2},\ldots,\mu_{n})$.
Then, $\mu_{1}=\lambda_{1},\ldots,\mu_{l}=\lambda_{l},\mu_{l+1}=\cdots=\mu_{n}=0$, where $l=l(\lambda)$.

Firstly, we determine the configuration of the tableau $S_{1}\cdots S_{i-1}T$.
Let us write $\mathrm{wt}(S_{i-1}T)=(\mu_{1}^{(i-1)},\mu_{2}^{(i-1)},\ldots,\mu_{n}^{(i-1)})$.
Since $S_{i-1}T=\left(  \Tilde{f}_{i-1}^{P}\right)  ^{(\mu_{i-1}-\mu_{i})}T=
\left(  \Tilde{f}_{i-1}^{P}\right)  ^{(\lambda_{i-1}-\lambda_{i})}T$, 
we have 
$\mu_{i-1}^{(i-1)}=\lambda_{i-1}-(\lambda_{i-1}-\lambda_{i})=\lambda_{i}$ and  
$\mu_{i}^{(i-1)}=\lambda_{i}+(\lambda_{i-1}-\lambda_{i})=\lambda_{i-1}$.
The other components of $\mathrm{wt}(S_{i-1}T)$ are the same as those of $\mathrm{wt}(T)$.
Set $T^{(k)}=S_{k}\cdots S_{i-1}T$ and 
$\mathrm{wt}(T^{(k)})=(\mu_{1}^{(k)},\mu_{2}^{(k)},\ldots,\mu_{n}^{(k)})$ for $k=i-1,\ldots,2$.
Assume that 
$\mu_{k}^{(k)}=\lambda_{i}, \mu_{k+1}^{(k)}=\lambda_{k}, \ldots, 
\mu_{i}^{(k)}=\lambda_{i-1}$, and that the other components of 
$\mathrm{wt}(T^{(k)})$ are the same as those of $\mathrm{wt}(T)$.
This assumption is satisfied for $k=i-1$.
Let $\mathrm{wt}(T^{(k-1)})=\mathrm{wt}(S_{k-1}T^{(k)})$ be 
$(\mu_{1}^{(k-1)},\mu_{2}^{(k-1)},\ldots,\mu_{n}^{(k-1)})$.
Since 
$S_{k-1}T^{(k)}=\left( \Tilde{f}_{k-1}^{P}\right)  ^{(\mu_{k-1}^{(k)}-\mu_{k}^{(k)})}T^{(k)}=
\left( \Tilde{f}_{k-1}^{P}\right)  ^{(\lambda_{k-1}-\lambda_{i})}T^{(k)}$, we have that 
$\mu_{k-1}^{(k-1)}=\lambda_{k-1}-(\lambda_{k-1}-\lambda_{i})=\lambda_{i}$, 
$\mu_{k}^{(k-1)}=\lambda_{i}+(\lambda_{k-1}-\lambda_{i})=\lambda_{k-1}$, 
and the other components of $\mathrm{wt}(S_{k-1}T^{(k)})$ are the same as those of $\mathrm{wt}(T^{(k)})$.
That is, 
$\mu_{k-1}^{(k-1)}=\lambda_{i}, \mu_{k}^{(k-1)}=\lambda_{k-1}, \ldots, 
\mu_{i}^{(k-1)}=\lambda_{i-1}$, and the other components of 
$\mathrm{wt}(T^{(k-1)})$ are the same as those of $\mathrm{wt}(T)$.
Hence, by induction, we have 
$\mathrm{wt}(T^{(1)})=\mathrm{wt}(S_{1}\cdots S_{i-1}T)=
(\mu_{1}^{(1)},\mu_{2}^{(1)}\ldots,\mu_{n}^{(1)})=
(\lambda_{i},\lambda_{1},\ldots,\lambda_{i-2},\lambda_{i-1},\lambda_{i+1},\ldots)$.
The weight of this form determines the configuration of $T^{(1)}$ uniquely.
The configuration of the $(k-1)$-th and $k$-th rows is
\setlength{\unitlength}{12pt}

\begin{center}
\begin{picture}(16,6)

\put(0,3){\line(0,1){1}}
\put(2,2){\line(0,1){2}}
\put(3,2){\line(0,1){1}}
\put(5,3){\line(0,1){1}}
\put(7,2){\line(0,1){2}}
\put(8,2){\line(0,1){2}}
\put(11,2){\line(0,1){2}}
\put(13,2){\line(0,1){1}}
\put(15,3){\line(0,1){1}}
\put(16,3){\line(0,1){1}}

\put(2,2){\line(1,0){13}}
\put(0,3){\line(1,0){16}}
\put(0,4){\line(1,0){16}}

\put(0,3){\makebox(2,1){$k-1$}}
\put(2,2){\makebox(1,1){$k$}}
\put(5,3){\makebox(2,1){$k-1$}}
\put(7,2){\makebox(1,1){$k$}}
\put(7,3){\makebox(1,1){$k^{\prime}$}}
\put(8,2){\makebox(3,1){$(k+1)^{\prime}$}}
\put(8,3){\makebox(3,1){$k$}}
\put(11,2){\makebox(2,1){$k+1$}}
\put(15,3){\makebox(1,1){$k$}}

\put(3,2){\makebox(4,1){$\cdots$}}
\put(2,3){\makebox(3,1){$\cdots$}}
\put(13,2){\makebox(2,1){$\cdots$}}
\put(11,3){\makebox(4,1){$\cdots$}}

\put(8,0){\makebox(3,1){$\vdots$}}
\put(8,5){\makebox(3,1){$\vdots$}}

\put(7,0){\makebox(1,1){\small $\lambda_{i}$}}
\put(7,1){\makebox(1,1){$\uparrow$}}
\end{picture},
\end{center}
for $k=2,\ldots,i-2$, where the rightmost $k$ in the $k$-th row is located in the $\lambda_{i}$-th box from the left, i.e., 
$T_{k,k+\lambda_{i}-1}^{(1)}=k$ and $T_{k,k+\lambda_{i}}^{(1)}=(k+1)^{\prime}$ and the all boxes of the right part from the $(\lambda_{i}+2)$-th position in the $k$-th row are filled with letters $k+1$.
The letters $k^{\prime}$ and $k$ appear only in these two rows.
The configuration of the $(i-1)$-th and $i$-th rows is
\setlength{\unitlength}{12pt}

\begin{center}
\begin{picture}(12,6)

\put(0,3){\line(0,1){1}}
\put(2,2){\line(0,1){2}}
\put(3,2){\line(0,1){1}}
\put(5,3){\line(0,1){1}}
\put(7,2){\line(0,1){2}}
\put(8,2){\line(0,1){2}}
\put(9,3){\line(0,1){1}}
\put(11,3){\line(0,1){1}}
\put(12,3){\line(0,1){1}}

\put(2,2){\line(1,0){6}}
\put(0,3){\line(1,0){12}}
\put(0,4){\line(1,0){12}}

\put(0,3){\makebox(2,1){$i-1$}}
\put(2,2){\makebox(1,1){$i$}}
\put(5,3){\makebox(2,1){$i-1$}}
\put(7,2){\makebox(1,1){$i$}}
\put(7,3){\makebox(1,1){$i^{\prime}$}}
\put(8,3){\makebox(1,1){$i$}}
\put(11,3){\makebox(1,1){$i$}}

\put(3,2){\makebox(4,1){$\cdots$}}
\put(2,3){\makebox(3,1){$\cdots$}}
\put(9,3){\makebox(2,1){$\cdots$}}

\put(3,0){\makebox(4,1){$\vdots$}}
\put(3,5){\makebox(4,1){$\vdots$}}

\end{picture},
\end{center}
where letters $i^{\prime}$ and $i$ appear only in these two rows.
The configuration below the $i$-th row of $T^{(1)}$ is the same as that of $T$.
Note that the number of $k$ in the $k$-th row is $\lambda_{i}$ irrespective of $k$ for $k=1,2,\ldots,i$.

Secondly, we compute the configuration of the tableau $S_{2}\cdots S_{i}S_{1}\cdots S_{i-1}T$.
Let us write $\mathrm{wt}(S_{i}T^{(1)})=(\nu_{1}^{(i)},\nu_{2}^{(i)},\ldots,\nu_{n}^{(i)})$.
Since $S_{i}T^{(1)}=\left(  \Tilde{f}_{i}^{P}\right)  ^{(\mu_{i}^{(1)}-\mu_{i+1}^{(1)})}T^{(1)}=
\left(  \Tilde{f}_{i}^{P}\right)^{(\lambda_{i-1}-\lambda_{i+1})}T^{(1)}$, 
we have $\nu_{i}^{(i)}=\lambda_{i-1}-(\lambda_{i-1}-\lambda_{i+1})=\lambda_{i+1}$ and 
$\nu_{i+1}^{(i)}=\lambda_{i+1}+(\lambda_{i-1}-\lambda_{i+1})=\lambda_{i-1}$.
The other components of $\mathrm{wt}(S_{i}T^{(1)})$ are the same as those of $\mathrm{wt}(T^{(1)})$.
In particular, $\nu_{i-1}^{(i)}=\lambda_{i-2}$.
In this step, the left $\lambda_{i+1}$ letters $i$ in the $i$-th row in the configuration above is bracketed in $\mathrm{rw}(T^{(1)})$ by the $(i+1,i)$-bracketing so that the number of unbracketed letters $i^{\prime}$ and $i$ is $\lambda_{i-1}-\lambda_{i+1}$.
Since $\Tilde{f}_{i}^{P}$ is applied $\lambda_{i-1}-\lambda_{i+1}$ times in this step, all of these unbracketed letters $i^{\prime}$ and $i$ are updated and the configuration of the $(i-1)$-th and $i$-th rows of $S_{i}T^{(1)}$ turns out to be
\setlength{\unitlength}{12pt}

\begin{center}
\begin{picture}(24,6)

\put(0,3){\line(0,1){1}}
\put(2,2){\line(0,1){2}}
\put(3,2){\line(0,1){1}}
\put(5,2){\line(0,1){1}}
\put(6,2){\line(0,1){1}}
\put(9,2){\line(0,1){1}}
\put(11,2){\line(0,1){1}}
\put(12,3){\line(0,1){1}}
\put(14,2){\line(0,1){2}}
\put(17,2){\line(0,1){2}}
\put(19,3){\line(0,1){1}}
\put(22,3){\line(0,1){1}}
\put(24,3){\line(0,1){1}}

\put(2,2){\line(1,0){15}}
\put(0,3){\line(1,0){24}}
\put(0,4){\line(1,0){24}}

\put(0,3){\makebox(2,1){$i-1$}}
\put(2,2){\makebox(1,1){$i$}}
\put(5,2){\makebox(1,1){$i$}}
\put(6,2){\makebox(3,1){$(i+1)^{\prime}$}}
\put(9,2){\makebox(2,1){$i+1$}}
\put(12,3){\makebox(2,1){$i-1$}}
\put(14,2){\makebox(3,1){$i+1$}}
\put(14,3){\makebox(3,1){$(i+1)^{\prime}$}}
\put(17,3){\makebox(2,1){$i+1$}}
\put(22,3){\makebox(2,1){$i+1$}}

\put(3,2){\makebox(2,1){$\cdots$}}
\put(2,3){\makebox(10,1){$\cdots$}}
\put(11,2){\makebox(3,1){$\cdots$}}
\put(19,3){\makebox(3,1){$\cdots$}}

\put(11,0){\makebox(3,1){$\vdots$}}
\put(11,5){\makebox(3,1){$\vdots$}}

\put(5,0){\makebox(1,1){\small $\lambda_{i+1}$}}
\put(5,1){\makebox(1,1){$\uparrow$}}
\end{picture}.
\end{center}
The computation can be done step by step following the rules for $\Tilde{f}_{i}^{P}$.
The letter $i^{\prime}$ disappears in the new configuration and the number of letters $i$, all of which are located in the $i$-th row, is reduced to $\lambda_{i+1}$.

Let us write $S_{k+1}\cdots S_{i}T^{(1)}=T^{(k+1)\prime}$ and 
$\mathrm{wt}(T^{(k+1)\prime})=(\nu_{1}^{(k+1)},\nu_{2}^{(k+1)},\ldots,\nu_{n}^{(k+1)})$ for $k=i-1,\ldots,2$.
For $k=i-1,\ldots,2$, assume that the configuration of the $(k-1)$-th and $k$-th rows of $T^{(k+1)\prime}$ is
\setlength{\unitlength}{12pt}

\begin{center}
\begin{picture}(16,6)

\put(0,3){\line(0,1){1}}
\put(2,2){\line(0,1){2}}
\put(3,2){\line(0,1){1}}
\put(5,3){\line(0,1){1}}
\put(7,2){\line(0,1){2}}
\put(8,2){\line(0,1){2}}
\put(11,2){\line(0,1){2}}
\put(13,2){\line(0,1){1}}
\put(15,3){\line(0,1){1}}
\put(16,3){\line(0,1){1}}

\put(2,2){\line(1,0){13}}
\put(0,3){\line(1,0){16}}
\put(0,4){\line(1,0){16}}

\put(0,3){\makebox(2,1){$k-1$}}
\put(2,2){\makebox(1,1){$k$}}
\put(5,3){\makebox(2,1){$k-1$}}
\put(7,2){\makebox(1,1){$k$}}
\put(7,3){\makebox(1,1){$k^{\prime}$}}
\put(8,2){\makebox(3,1){$(k+2)^{\prime}$}}
\put(8,3){\makebox(3,1){$k$}}
\put(11,2){\makebox(2,1){$k+2$}}
\put(15,3){\makebox(1,1){$k$}}

\put(3,2){\makebox(4,1){$\cdots$}}
\put(2,3){\makebox(3,1){$\cdots$}}
\put(13,2){\makebox(2,1){$\cdots$}}
\put(11,3){\makebox(4,1){$\cdots$}}

\put(8,0){\makebox(3,1){$\vdots$}}
\put(8,5){\makebox(3,1){$\vdots$}}

\put(7,0){\makebox(1,1){\small $\lambda_{i+1}$}}
\put(7,1){\makebox(1,1){$\uparrow$}}
\end{picture},
\end{center}
the number of letters $(k+1)$, all of which are located in the left side of the $(k+1)$-th row, is $\lambda_{i+1}$ and the letter $(k+1)^{\prime}$ does not appear in $T^{(k+1)\prime}$ so that $\nu_{k+1}^{(k+1)}=\lambda_{i+1}$, 
and letters $k^{\prime}$ and $k$ appear only in these two rows so that $\nu_{k}^{(k+1)}=\lambda_{k-1}$.
For $k=i-1$, this configuration is consistent with the previous one and other assumptions are satisfied.
Let us write 
$\mathrm{wt}(S_{k}T^{(k+1)\prime})=(\nu_{1}^{(k)},\nu_{2}^{(k)},\ldots,\nu_{n}^{(k)})$.
Since 
$T^{(k)\prime}=S_{k}T^{(k+1)\prime}=\left(  \Tilde{f}_{k}^{P}\right)  ^{(\nu_{k}^{(k+1)}-\nu_{k+1}^{(k+1)})}T^{(k+1)\prime}=
\left(  \Tilde{f}_{k}^{P}\right)^{(\lambda_{k-1}-\lambda_{i+1})}T^{(k+1)\prime}$, 
we have $\nu_{k}^{(k)}=\lambda_{k-1}-(\lambda_{k-1}-\lambda_{i+1})=\lambda_{i+1}$ and 
$\nu_{k+1}^{(k)}=\lambda_{i+1}+(\lambda_{k-1}-\lambda_{i+1})=\lambda_{k-1}$.
The other components of $\mathrm{wt}(S_{k}T^{(k+1)\prime})$ are the same as those of $\mathrm{wt}(T^{(k+1)\prime})$.
In particular, $\nu_{k-1}^{(k)}=\lambda_{k-2}$.
It is easy to see that all $\lambda_{i+1}$ letters $k+1$ are bracketed and 
the number of unbracketed letters $k^{\prime}$ and $k$ is $\lambda_{k-1}-\lambda_{i+1}$ in $\mathrm{rw}(T^{(k+1)\prime})$ when the $(k+1,k)$-bracketing is applied to $\mathrm{rw}(T^{(k+1)\prime})$. 
Since $\Tilde{f}_{k}^{P}$ is applied $\lambda_{k-1}-\lambda_{i+1}$ times in this step, all of these unbracketed letters are updated and the configuration of the $(k-2)$-th and $(k-1)$-th rows of $T^{(k)\prime}$ is computed as
\setlength{\unitlength}{12pt}

\begin{center}
\begin{picture}(20,6)

\put(0,3){\line(0,1){1}}
\put(2,2){\line(0,1){2}}
\put(4,2){\line(0,1){1}}
\put(6,3){\line(0,1){1}}
\put(8,2){\line(0,1){2}}
\put(11,2){\line(0,1){2}}
\put(14,2){\line(0,1){2}}
\put(16,2){\line(0,1){1}}
\put(18,3){\line(0,1){1}}
\put(20,3){\line(0,1){1}}

\put(2,2){\line(1,0){16}}
\put(0,3){\line(1,0){20}}
\put(0,4){\line(1,0){20}}

\put(0,3){\makebox(2,1){$k-2$}}
\put(2,2){\makebox(2,1){$k-1$}}
\put(6,3){\makebox(2,1){$k-2$}}
\put(8,2){\makebox(3,1){$k-1$}}
\put(8,3){\makebox(3,1){$(k-1)^{\prime}$}}
\put(11,2){\makebox(3,1){$(k+1)^{\prime}$}}
\put(11,3){\makebox(3,1){$k-1$}}
\put(14,2){\makebox(2,1){$k+1$}}
\put(18,3){\makebox(2,1){$k-1$}}

\put(4,2){\makebox(4,1){$\cdots$}}
\put(2,3){\makebox(4,1){$\cdots$}}
\put(16,2){\makebox(2,1){$\cdots$}}
\put(14,3){\makebox(4,1){$\cdots$}}

\put(11,0){\makebox(3,1){$\vdots$}}
\put(11,5){\makebox(3,1){$\vdots$}}

\put(8,0){\makebox(3,1){\small $\lambda_{i+1}$}}
\put(8,1){\makebox(3,1){$\uparrow$}}
\end{picture}.
\end{center}
The number of letters $k$ below the $k$-th row, all of which are located in the left side of the $k$-th row, is $\lambda_{k-1}-(\lambda_{k-1}-\lambda_{i+1})=\lambda_{i+1}$ and the letter $k^{\prime}$ disappears in the new configuration so that $\nu_{k}^{(k)}=\lambda_{i+1}$.
Hence, by induction, we have that the configuration of the first row of 
$T^{(2)\prime}=S_{2}\cdots S_{i}S_{1}\cdots S_{i-1}T=S_{w_{i}}T$ is
\setlength{\unitlength}{12pt}

\begin{center}
\begin{picture}(9,2)

\put(0,0){\line(0,1){1}}
\put(1,0){\line(0,1){1}}
\put(3,0){\line(0,1){1}}
\put(4,0){\line(0,1){1}}
\put(5,0){\line(0,1){1}}
\put(6,0){\line(0,1){1}}
\put(8,0){\line(0,1){1}}
\put(9,0){\line(0,1){1}}

\put(0,0){\line(1,0){9}}
\put(0,1){\line(1,0){9}}

\put(0,0){\makebox(1,1){$1$}}
\put(3,0){\makebox(1,1){$1$}}
\put(4,0){\makebox(1,1){$3^{\prime}$}}
\put(5,0){\makebox(1,1){$3$}}
\put(8,0){\makebox(1,1){$3$}}

\put(1,0){\makebox(2,1){$\cdots$}}
\put(6,0){\makebox(2,1){$\cdots$}}

\put(3,1){\makebox(1,1){\small $\lambda_{i}$}}
\end{picture}.
\end{center}
Since the first row contains neither $2^{\prime}$ nor $2$, $\Tilde{e}_{\Bar{1}}^{P}S_{w_{i}}T=\boldsymbol{0}$ by Lemma~\ref{lem:eP} so that 
$\Tilde{e}_{\Bar{\imath}}^{P}T=\boldsymbol{0}$ and the proof of the first part is complete. 

Let $T\in \mathrm{PT}_{n}(\lambda)$ be a primed tableau where 
the subtableau with entries $n-l(\lambda)+i$ or $(n-l(\lambda)+i)^{\prime}$ is a connected border strip of size 
$\lambda_{l(\lambda)-i+1}$ starting at $T_{i,i}=n-l(\lambda)+i$ ($i=l(\lambda).\ldots,1$).
Then, 
$\mathrm{wt}(T)=(\underbrace{0,\ldots,0}_{n-l},\lambda_{l},\ldots,\lambda_{1},0,\ldots)$, where $l=l(\lambda)$.
One of the reduced expressions of $w_{0}$ is 
\[
w_{0}=(s_{1}s_{2}\cdots s_{n-1})(s_{1}s_{2}\cdots s_{n-2})\cdots(s_{1}s_{2})(s_{1})
\]
so that 
$S_{w_{0}}T=(S_{1}S_{2}\cdots S_{n-1})\cdots(S_{1}S_{2}\cdots S_{n-l})T$.
Since $S_{n-l}T=\left(  \Tilde{e}_{n-l}^{P}\right)  ^{\lambda_{l}}T$, we have 
$\mathrm{wt}(S_{n-l}T)=(\underbrace{0,\ldots,0}_{n-l-1},\lambda_{l},0,\lambda_{l-1},\ldots,\lambda_{1},0,\ldots)$.
Repeating the similar calculations, we have 
$\mathrm{wt}(S_{1}S_{2}\cdots S_{n-l}T)=(\lambda_{l},\underbrace{0,\ldots,0}_{n-l},\lambda_{l-1},\ldots,\lambda_{1},0,\ldots)$.
Set $T^{(k)}=(S_{1}S_{2}\cdots S_{n-l+k})\cdots(S_{1}S_{2}\cdots S_{n-l})T$ and assume that
\[
\mathrm{wt}(T^{(k)})=(\underbrace{\lambda_{l-k},\lambda_{l-k+1},\ldots,\lambda_{l}}_{k+1},
\underbrace{0,\ldots,0}_{n-l},\lambda_{l-k-1},\ldots,\lambda_{1},0,\ldots)
\]
for $k=0,\ldots,l-2$.
This assumption is satisfied for $k=0$.
Since $S_{n-l+k+1}T^{(k)}=\left(  \Tilde{e}_{n-l+k+1}^{P}\right)  ^{\lambda_{l-k-1}}T^{(k)}$, we have
\[
\mathrm{wt}(S_{n-l+k+1}T^{(k)})
 =(\underbrace{\lambda_{l-k},\ldots,\lambda_{l}}_{k+1},\underbrace{0,\ldots,0}_{n-l-1},\lambda_{l-k-1},0,\lambda_{l-k-2},\ldots,\lambda_{1},0,\ldots).
\]
Repeating the similar calculations, we have 
\[
\mathrm{wt}(S_{k+2}\cdots S_{n-l+k+1}T^{(k)})
=(\underbrace{\lambda_{l-k},\ldots,\lambda_{l}}_{k+1},\lambda_{l-k-1},\underbrace{0,\ldots,0}_{n-l},
\lambda_{l-k-2},\ldots,\lambda_{1},0,\ldots).
\]
Now since 
$S_{k+1}(S_{k+2}\cdots S_{n-l+k+1}T^{(k)})=
\left( \Tilde{e}_{k+1}^{P}\right)  ^{\lambda_{l-k-1}-\lambda_{l}}S_{k+2}\cdots S_{n-l+k+1}T^{(k)}$, 
we have 
\begin{align*}
&  \mathrm{wt}(S_{k+1}\cdots S_{n-l+k+1}T^{(k)})\\
&  =(\underbrace{\lambda_{l-k},\ldots,\lambda_{l-1}}_{k},\lambda_{l-k-1},\lambda_{l},
\underbrace{0,\ldots,0}_{n-l},\lambda_{l-k-2},\ldots,\lambda_{1},0,\ldots).
\end{align*}
Repeating the similar calculations, we obtain
\begin{align*}
\mathrm{wt}(T^{(k+1)})& = \mathrm{wt}(S_{1}\cdots S_{n-l+k+1}T^{(k)})\\
&  =(\underbrace{\lambda_{l-k-1},\ldots,\lambda_{l}}_{k+2},
\underbrace{0,\ldots,0}_{n-l},\lambda_{l-k-2},\ldots,\lambda_{1},0,\ldots).
\end{align*}
Hence, by induction, we have 
$\mathrm{wt}(S_{w_{0}}T)=\mathrm{wt}(T^{(l-1)})=(\lambda_{1},\lambda_{2},\ldots,\lambda_{l},0,\ldots)$, which clearly indicates that 
all the entries in the $i$-th row of $S_{w_{0}}T$ are letters $i$ for $i=1,2,\ldots,l$ 
so that $S_{w_{0}}T$ is the $\mathfrak{q}(n)$-highest weight vector in $\mathrm{PT}_{n}(\lambda)$.
This finishes the proof of the second part.

\end{proof}

\begin{ex}
Let $n=5$ and $\lambda=(5,3,1)$.
Then the $\mathfrak{q}(n)$-highest weight vector (left) and the $\mathfrak{q}(n)$-lowest weight vector (right) are

\setlength{\unitlength}{12pt}

\begin{center}
\begin{picture}(13,3)

\put(0,2){\line(0,1){1}}
\put(1,1){\line(0,1){2}}
\put(2,0){\line(0,1){3}}
\put(3,0){\line(0,1){3}}
\put(4,1){\line(0,1){2}}
\put(5,2){\line(0,1){1}}
\put(2,0){\line(1,0){1}}
\put(1,1){\line(1,0){3}}
\put(0,2){\line(1,0){5}}
\put(0,3){\line(1,0){5}}

\put(2,0){\makebox(1,1){$3$}}
\put(1,1){\makebox(1,1){$2$}}
\put(2,1){\makebox(1,1){$2$}}
\put(3,1){\makebox(1,1){$2$}}
\put(0,2){\makebox(1,1){$1$}}
\put(1,2){\makebox(1,1){$1$}}
\put(2,2){\makebox(1,1){$1$}}
\put(3,2){\makebox(1,1){$1$}}
\put(4,2){\makebox(1,1){$1$}}

\put(8,2){\line(0,1){1}}
\put(9,1){\line(0,1){2}}
\put(10,0){\line(0,1){3}}
\put(11,0){\line(0,1){3}}
\put(12,1){\line(0,1){2}}
\put(13,2){\line(0,1){1}}
\put(10,0){\line(1,0){1}}
\put(9,1){\line(1,0){3}}
\put(8,2){\line(1,0){5}}
\put(8,3){\line(1,0){5}}

\put(10,0){\makebox(1,1){$5$}}
\put(9,1){\makebox(1,1){$4$}}
\put(10,1){\makebox(1,1){$5^{\prime}$}}
\put(11,1){\makebox(1,1){$5$}}
\put(8,2){\makebox(1,1){$3$}}
\put(9,2){\makebox(1,1){$4^{\prime}$}}
\put(10,2){\makebox(1,1){$4$}}
\put(11,2){\makebox(1,1){$5^{\prime}$}}
\put(12,2){\makebox(1,1){$5$}}

\end{picture}.
\end{center}

\end{ex}

Figure~\ref{fig:PT} is an example of $\mathfrak{q}(n)$-structure of $\mathrm{PT}_{n}(\lambda)$, where $n=3$ and $\lambda=(3,1)$.

\setlength{\unitlength}{10pt}

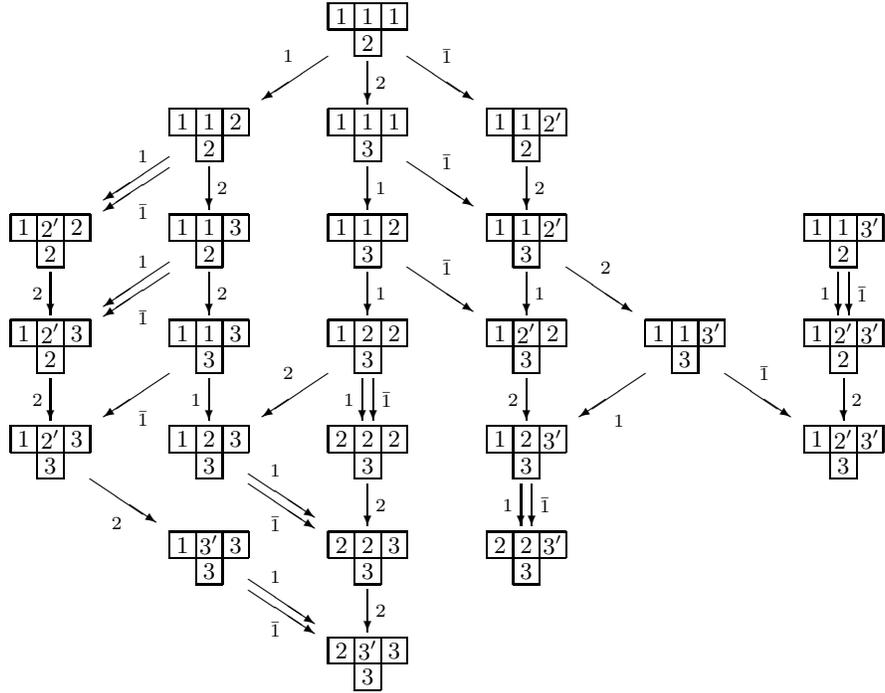
\begin{figure}
\begin{center}
\begin{picture}(33,26)

\put(1.5,11.8){\vector(0,-1){1.6}}
\put(1.5,15.8){\vector(0,-1){1.6}}

\put(7.5,11.8){\vector(0,-1){1.6}}
\put(7.5,15.8){\vector(0,-1){1.6}}
\put(7.5,19.8){\vector(0,-1){1.6}}

\put(13.5,3.8){\vector(0,-1){1.6}}
\put(13.5,7.8){\vector(0,-1){1.6}}
\put(13.3,11.8){\vector(0,-1){1.6}}
\put(13.7,11.8){\vector(0,-1){1.6}}
\put(13.5,15.8){\vector(0,-1){1.6}}
\put(13.5,19.8){\vector(0,-1){1.6}}
\put(13.5,23.8){\vector(0,-1){1.6}}

\put(19.3,7.8){\vector(0,-1){1.6}}
\put(19.7,7.8){\vector(0,-1){1.6}}
\put(19.5,11.8){\vector(0,-1){1.6}}
\put(19.5,15.8){\vector(0,-1){1.6}}
\put(19.5,19.8){\vector(0,-1){1.6}}

\put(31.5,11.8){\vector(0,-1){1.6}}
\put(31.3,15.8){\vector(0,-1){1.6}}
\put(31.7,15.8){\vector(0,-1){1.6}}

\put(3,8){\vector(3,-2){2.5}}
\put(6,12){\vector(-3,-2){2,5}}
\put(6,15.8){\vector(-3,-2){2,5}}
\put(6,16.2){\vector(-3,-2){2,5}}
\put(6,19.8){\vector(-3,-2){2,5}}
\put(6,20.2){\vector(-3,-2){2,5}}

\put(9,3.8){\vector(3,-2){2.5}}
\put(9,4.2){\vector(3,-2){2.5}}
\put(9,7.8){\vector(3,-2){2.5}}
\put(9,8.2){\vector(3,-2){2.5}}
\put(12,12){\vector(-3,-2){2,5}}
\put(12,24){\vector(-3,-2){2,5}}

\put(15,16){\vector(3,-2){2.5}}
\put(15,20){\vector(3,-2){2.5}}
\put(15,24){\vector(3,-2){2.5}}

\put(21,16){\vector(3,-2){2.5}}
\put(24,12){\vector(-3,-2){2,5}}

\put(27,12){\vector(3,-2){2.5}}

\put(0.5,10.5){\makebox(1,1){\scriptsize $2$}}
\put(0.5,14.5){\makebox(1,1){\scriptsize $2$}}

\put(6.5,10.5){\makebox(1,1){\scriptsize $1$}}
\put(7.5,14.5){\makebox(1,1){\scriptsize $2$}}
\put(7.5,18.5){\makebox(1,1){\scriptsize $2$}}

\put(13.5,2.5){\makebox(1,1){\scriptsize $2$}}
\put(13.5,6.5){\makebox(1,1){\scriptsize $2$}}
\put(12.3,10.5){\makebox(1,1){\scriptsize $1$}}
\put(13.7,10.5){\makebox(1,1){\scriptsize $\Bar{1}$}}
\put(13.5,14.5){\makebox(1,1){\scriptsize $1$}}
\put(13.5,18.5){\makebox(1,1){\scriptsize $1$}}
\put(13.5,22.5){\makebox(1,1){\scriptsize $2$}}

\put(18.3,6.5){\makebox(1,1){\scriptsize $1$}}
\put(19.7,6.5){\makebox(1,1){\scriptsize $\Bar{1}$}}
\put(18.5,10.5){\makebox(1,1){\scriptsize $2$}}
\put(19.5,14.5){\makebox(1,1){\scriptsize $1$}}
\put(19.5,18.5){\makebox(1,1){\scriptsize $2$}}

\put(31.5,10.5){\makebox(1,1){\scriptsize $2$}}
\put(30.3,14.5){\makebox(1,1){\scriptsize $1$}}
\put(31.7,14.5){\makebox(1,1){\scriptsize $\Bar{1}$}}

\put(3.5,5.8){\makebox(1,1){\scriptsize $2$}}
\put(4.5,9.8){\makebox(1,1){\scriptsize $\Bar{1}$}}
\put(4.5,13.6){\makebox(1,1){\scriptsize $\Bar{1}$}}
\put(4.5,15.7){\makebox(1,1){\scriptsize $1$}}
\put(4.5,17.6){\makebox(1,1){\scriptsize $\Bar{1}$}}
\put(4.5,19.7){\makebox(1,1){\scriptsize $1$}}

\put(9.5,1.8){\makebox(1,1){\scriptsize $\Bar{1}$}}
\put(9.5,3.8){\makebox(1,1){\scriptsize $1$}}
\put(9.5,5.8){\makebox(1,1){\scriptsize $\Bar{1}$}}
\put(9.5,7.8){\makebox(1,1){\scriptsize $1$}}
\put(10,11.5){\makebox(1,1){\scriptsize $2$}}
\put(10,23.5){\makebox(1,1){\scriptsize $1$}}

\put(16,15.5){\makebox(1,1){\scriptsize $\Bar{1}$}}
\put(16,19.5){\makebox(1,1){\scriptsize $\Bar{1}$}}
\put(16,23.5){\makebox(1,1){\scriptsize $\Bar{1}$}}

\put(22.5,9.7){\makebox(1,1){\scriptsize $1$}}
\put(22,15.5){\makebox(1,1){\scriptsize $2$}}

\put(28,11.5){\makebox(1,1){\scriptsize $\Bar{1}$}}

\put(1,8){\makebox(1,1){\small $3$}}
\put(0,9){\makebox(1,1){\small $1$}}
\put(1,9){\makebox(1,1){\small $2^{\prime}$}}
\put(2,9){\makebox(1,1){\small $3$}}

\put(1,12){\makebox(1,1){\small $2$}}
\put(0,13){\makebox(1,1){\small $1$}}
\put(1,13){\makebox(1,1){\small $2^{\prime}$}}
\put(2,13){\makebox(1,1){\small $3$}}

\put(1,16){\makebox(1,1){\small $2$}}
\put(0,17){\makebox(1,1){\small $1$}}
\put(1,17){\makebox(1,1){\small $2^{\prime}$}}
\put(2,17){\makebox(1,1){\small $2$}}

\put(7,4){\makebox(1,1){\small $3$}}
\put(6,5){\makebox(1,1){\small $1$}}
\put(7,5){\makebox(1,1){\small $3^{\prime}$}}
\put(8,5){\makebox(1,1){\small $3$}}

\put(7,8){\makebox(1,1){\small $3$}}
\put(6,9){\makebox(1,1){\small $1$}}
\put(7,9){\makebox(1,1){\small $2$}}
\put(8,9){\makebox(1,1){\small $3$}}

\put(7,12){\makebox(1,1){\small $3$}}
\put(6,13){\makebox(1,1){\small $1$}}
\put(7,13){\makebox(1,1){\small $1$}}
\put(8,13){\makebox(1,1){\small $3$}}

\put(7,16){\makebox(1,1){\small $2$}}
\put(6,17){\makebox(1,1){\small $1$}}
\put(7,17){\makebox(1,1){\small $1$}}
\put(8,17){\makebox(1,1){\small $3$}}

\put(7,20){\makebox(1,1){\small $2$}}
\put(6,21){\makebox(1,1){\small $1$}}
\put(7,21){\makebox(1,1){\small $1$}}
\put(8,21){\makebox(1,1){\small $2$}}

\put(13,0){\makebox(1,1){\small $3$}}
\put(12,1){\makebox(1,1){\small $2$}}
\put(13,1){\makebox(1,1){\small $3^{\prime}$}}
\put(14,1){\makebox(1,1){\small $3$}}

\put(13,4){\makebox(1,1){\small $3$}}
\put(12,5){\makebox(1,1){\small $2$}}
\put(13,5){\makebox(1,1){\small $2$}}
\put(14,5){\makebox(1,1){\small $3$}}

\put(13,8){\makebox(1,1){\small $3$}}
\put(12,9){\makebox(1,1){\small $2$}}
\put(13,9){\makebox(1,1){\small $2$}}
\put(14,9){\makebox(1,1){\small $2$}}

\put(13,12){\makebox(1,1){\small $3$}}
\put(12,13){\makebox(1,1){\small $1$}}
\put(13,13){\makebox(1,1){\small $2$}}
\put(14,13){\makebox(1,1){\small $2$}}

\put(13,16){\makebox(1,1){\small $3$}}
\put(12,17){\makebox(1,1){\small $1$}}
\put(13,17){\makebox(1,1){\small $1$}}
\put(14,17){\makebox(1,1){\small $2$}}

\put(13,20){\makebox(1,1){\small $3$}}
\put(12,21){\makebox(1,1){\small $1$}}
\put(13,21){\makebox(1,1){\small $1$}}
\put(14,21){\makebox(1,1){\small $1$}}

\put(13,24){\makebox(1,1){\small $2$}}
\put(12,25){\makebox(1,1){\small $1$}}
\put(13,25){\makebox(1,1){\small $1$}}
\put(14,25){\makebox(1,1){\small $1$}}

\put(19,4){\makebox(1,1){\small $3$}}
\put(18,5){\makebox(1,1){\small $2$}}
\put(19,5){\makebox(1,1){\small $2$}}
\put(20,5){\makebox(1,1){\small $3^{\prime}$}}

\put(19,8){\makebox(1,1){\small $3$}}
\put(18,9){\makebox(1,1){\small $1$}}
\put(19,9){\makebox(1,1){\small $2$}}
\put(20,9){\makebox(1,1){\small $3^{\prime}$}}

\put(19,12){\makebox(1,1){\small $3$}}
\put(18,13){\makebox(1,1){\small $1$}}
\put(19,13){\makebox(1,1){\small $2^{\prime}$}}
\put(20,13){\makebox(1,1){\small $2$}}

\put(19,16){\makebox(1,1){\small $3$}}
\put(18,17){\makebox(1,1){\small $1$}}
\put(19,17){\makebox(1,1){\small $1$}}
\put(20,17){\makebox(1,1){\small $2^{\prime}$}}

\put(19,20){\makebox(1,1){\small $2$}}
\put(18,21){\makebox(1,1){\small $1$}}
\put(19,21){\makebox(1,1){\small $1$}}
\put(20,21){\makebox(1,1){\small $2^{\prime}$}}

\put(25,12){\makebox(1,1){\small $3$}}
\put(24,13){\makebox(1,1){\small $1$}}
\put(25,13){\makebox(1,1){\small $1$}}
\put(26,13){\makebox(1,1){\small $3^{\prime}$}}

\put(31,8){\makebox(1,1){\small $3$}}
\put(30,9){\makebox(1,1){\small $1$}}
\put(31,9){\makebox(1,1){\small $2^{\prime}$}}
\put(32,9){\makebox(1,1){\small $3^{\prime}$}}

\put(31,12){\makebox(1,1){\small $2$}}
\put(30,13){\makebox(1,1){\small $1$}}
\put(31,13){\makebox(1,1){\small $2^{\prime}$}}
\put(32,13){\makebox(1,1){\small $3^{\prime}$}}

\put(31,16){\makebox(1,1){\small $2$}}
\put(30,17){\makebox(1,1){\small $1$}}
\put(31,17){\makebox(1,1){\small $1$}}
\put(32,17){\makebox(1,1){\small $3^{\prime}$}}

\put(0,9){\line(0,1){1}}
\put(1,8){\line(0,1){2}}
\put(2,8){\line(0,1){2}}
\put(3,9){\line(0,1){1}}
\put(1,8){\line(1,0){1}}
\put(0,9){\line(1,0){3}}
\put(0,10){\line(1,0){3}}

\put(0,13){\line(0,1){1}}
\put(1,12){\line(0,1){2}}
\put(2,12){\line(0,1){2}}
\put(3,13){\line(0,1){1}}
\put(1,12){\line(1,0){1}}
\put(0,13){\line(1,0){3}}
\put(0,14){\line(1,0){3}}

\put(0,17){\line(0,1){1}}
\put(1,16){\line(0,1){2}}
\put(2,16){\line(0,1){2}}
\put(3,17){\line(0,1){1}}
\put(1,16){\line(1,0){1}}
\put(0,17){\line(1,0){3}}
\put(0,18){\line(1,0){3}}

\put(6,5){\line(0,1){1}}
\put(7,4){\line(0,1){2}}
\put(8,4){\line(0,1){2}}
\put(9,5){\line(0,1){1}}
\put(7,4){\line(1,0){1}}
\put(6,5){\line(1,0){3}}
\put(6,6){\line(1,0){3}}

\put(6,9){\line(0,1){1}}
\put(7,8){\line(0,1){2}}
\put(8,8){\line(0,1){2}}
\put(9,9){\line(0,1){1}}
\put(7,8){\line(1,0){1}}
\put(6,9){\line(1,0){3}}
\put(6,10){\line(1,0){3}}

\put(6,13){\line(0,1){1}}
\put(7,12){\line(0,1){2}}
\put(8,12){\line(0,1){2}}
\put(9,13){\line(0,1){1}}
\put(7,12){\line(1,0){1}}
\put(6,13){\line(1,0){3}}
\put(6,14){\line(1,0){3}}

\put(6,17){\line(0,1){1}}
\put(7,16){\line(0,1){2}}
\put(8,16){\line(0,1){2}}
\put(9,17){\line(0,1){1}}
\put(7,16){\line(1,0){1}}
\put(6,17){\line(1,0){3}}
\put(6,18){\line(1,0){3}}

\put(6,21){\line(0,1){1}}
\put(7,20){\line(0,1){2}}
\put(8,20){\line(0,1){2}}
\put(9,21){\line(0,1){1}}
\put(7,20){\line(1,0){1}}
\put(6,21){\line(1,0){3}}
\put(6,22){\line(1,0){3}}

\put(12,1){\line(0,1){1}}
\put(13,0){\line(0,1){2}}
\put(14,0){\line(0,1){2}}
\put(15,1){\line(0,1){1}}
\put(13,0){\line(1,0){1}}
\put(12,1){\line(1,0){3}}
\put(12,2){\line(1,0){3}}

\put(12,5){\line(0,1){1}}
\put(13,4){\line(0,1){2}}
\put(14,4){\line(0,1){2}}
\put(15,5){\line(0,1){1}}
\put(13,4){\line(1,0){1}}
\put(12,5){\line(1,0){3}}
\put(12,6){\line(1,0){3}}

\put(12,9){\line(0,1){1}}
\put(13,8){\line(0,1){2}}
\put(14,8){\line(0,1){2}}
\put(15,9){\line(0,1){1}}
\put(13,8){\line(1,0){1}}
\put(12,9){\line(1,0){3}}
\put(12,10){\line(1,0){3}}

\put(12,13){\line(0,1){1}}
\put(13,12){\line(0,1){2}}
\put(14,12){\line(0,1){2}}
\put(15,13){\line(0,1){1}}
\put(13,12){\line(1,0){1}}
\put(12,13){\line(1,0){3}}
\put(12,14){\line(1,0){3}}

\put(12,17){\line(0,1){1}}
\put(13,16){\line(0,1){2}}
\put(14,16){\line(0,1){2}}
\put(15,17){\line(0,1){1}}
\put(13,16){\line(1,0){1}}
\put(12,17){\line(1,0){3}}
\put(12,18){\line(1,0){3}}

\put(12,21){\line(0,1){1}}
\put(13,20){\line(0,1){2}}
\put(14,20){\line(0,1){2}}
\put(15,21){\line(0,1){1}}
\put(13,20){\line(1,0){1}}
\put(12,21){\line(1,0){3}}
\put(12,22){\line(1,0){3}}

\put(12,25){\line(0,1){1}}
\put(13,24){\line(0,1){2}}
\put(14,24){\line(0,1){2}}
\put(15,25){\line(0,1){1}}
\put(13,24){\line(1,0){1}}
\put(12,25){\line(1,0){3}}
\put(12,26){\line(1,0){3}}

\put(18,5){\line(0,1){1}}
\put(19,4){\line(0,1){2}}
\put(20,4){\line(0,1){2}}
\put(21,5){\line(0,1){1}}
\put(19,4){\line(1,0){1}}
\put(18,5){\line(1,0){3}}
\put(18,6){\line(1,0){3}}

\put(18,9){\line(0,1){1}}
\put(19,8){\line(0,1){2}}
\put(20,8){\line(0,1){2}}
\put(21,9){\line(0,1){1}}
\put(19,8){\line(1,0){1}}
\put(18,9){\line(1,0){3}}
\put(18,10){\line(1,0){3}}

\put(18,13){\line(0,1){1}}
\put(19,12){\line(0,1){2}}
\put(20,12){\line(0,1){2}}
\put(21,13){\line(0,1){1}}
\put(19,12){\line(1,0){1}}
\put(18,13){\line(1,0){3}}
\put(18,14){\line(1,0){3}}

\put(18,17){\line(0,1){1}}
\put(19,16){\line(0,1){2}}
\put(20,16){\line(0,1){2}}
\put(21,17){\line(0,1){1}}
\put(19,16){\line(1,0){1}}
\put(18,17){\line(1,0){3}}
\put(18,18){\line(1,0){3}}

\put(18,21){\line(0,1){1}}
\put(19,20){\line(0,1){2}}
\put(20,20){\line(0,1){2}}
\put(21,21){\line(0,1){1}}
\put(19,20){\line(1,0){1}}
\put(18,21){\line(1,0){3}}
\put(18,22){\line(1,0){3}}

\put(24,13){\line(0,1){1}}
\put(25,12){\line(0,1){2}}
\put(26,12){\line(0,1){2}}
\put(27,13){\line(0,1){1}}
\put(25,12){\line(1,0){1}}
\put(24,13){\line(1,0){3}}
\put(24,14){\line(1,0){3}}

\put(30,9){\line(0,1){1}}
\put(31,8){\line(0,1){2}}
\put(32,8){\line(0,1){2}}
\put(33,9){\line(0,1){1}}
\put(31,8){\line(1,0){1}}
\put(30,9){\line(1,0){3}}
\put(30,10){\line(1,0){3}}

\put(30,13){\line(0,1){1}}
\put(31,12){\line(0,1){2}}
\put(32,12){\line(0,1){2}}
\put(33,13){\line(0,1){1}}
\put(31,12){\line(1,0){1}}
\put(30,13){\line(1,0){3}}
\put(30,14){\line(1,0){3}}

\put(30,17){\line(0,1){1}}
\put(31,16){\line(0,1){2}}
\put(32,16){\line(0,1){2}}
\put(33,17){\line(0,1){1}}
\put(31,16){\line(1,0){1}}
\put(30,17){\line(1,0){3}}
\put(30,18){\line(1,0){3}}

\end{picture}
\end{center}
\caption{$\mathfrak{q}$(3)-crystal structure of $\mathrm{PT}_{3}(\lambda =(3,1))$.}
\label{fig:PT}

\end{figure}

\section{$\mathfrak{q}(m)$-crystal structure on signed unimodal factorizations with $m$ factors of reduced words of type $B$} \label{sec:unimodal}

In this section, we give a $\mathfrak{q}(m)$-crystal structure on signed unimodal factorizations with $m$ factors of reduced words of type $B$ by translating the $\mathfrak{q}(m)$-crystal structure of primed tableaux obtained in Section~\ref{sec:primed} through the primed Kra\'{s}kiewicz insertion.
The key ingredients are the Kra\'{s}kiewicz insertion and its properties as well as the concept of unimodal words.

Let us start by defining reduced words of type $B$.
Let $W_{B}^{n}$ be the Coxeter group of type $B_{n}$ (or type $C_{n}$).
In the representation of signed permutations,  $W_{B}^{n}$ is generated by simple reflections,
\[
\begin{array}{l}
s_{0}=\Bar{1}2\cdots n, \\
s_{i}=12\cdots i-1\,i+1\,i\, i+2 \cdots n \quad (i=1,2,\ldots,n-1),
\end{array}
\]
where we use the $1$-line notation for signed permutations~\cite{Hum}, e.g.,
$3\Bar{2}4\Bar{1}$ stands for
$\begin{pmatrix}
1 & 2 & 3 & 4 \\
3 & \Bar{2} & 4 & \Bar{1}
\end{pmatrix}.
$
We identify the product of simple reflections with the sequence of their indices or the word.
For example, $s_{0}s_{1}s_{2}s_{0}s_{1}s_{3}$ may be written as the word $012013$.
For $w \in W_{B}^{n}$, the length of $w$ is defined to be 
$l(w)=\min \{l \mid \exists i_{1},\ldots,i_{l}, w=s_{i_{1}}\cdots s_{i_{l}} \}$ and 
a word $w$ of length $l(w)$ is referred to as a \emph{reduced word}.
We denote by $\left\vert \boldsymbol{a}\right\vert$ the number of letters in a word $\boldsymbol{a}$.
The set of all reduced words of the element $w \in W_{B}^{n}$ is denoted by $R(w)$.
The elements of $R(w)$ are classified by the type $B$ Coxeter-Knuth relations~\cite{Kra1}.

\begin{df}
A word $\boldsymbol{a}=a_{1}a_{2}\cdots a_{l}$ is called \emph{unimodal} if there exists $k$ such that 
$a_{1}>a_{2}>\cdots>a_{k}<\cdots<a_{l}$.
We call 
$\boldsymbol{a}_{\downarrow}=a_{1}a_{2}\ldots a_{k}$ the decreasing part of $\boldsymbol{a}$ and 
$\boldsymbol{a}_{\uparrow}=a_{k+1}a_{k+2}\ldots a_{l}$ the increasing part of $\boldsymbol{a}$.
\end{df}

\begin{df}
Let $P$ be a shifted tableau of shape $\lambda$ and let $P_{i}$ be the sequence of entries in the $i$-th row of $P$, reading from left to right.
If 
\begin{itemize}
\item[(1)]
$P_{l(\lambda)}P_{l(\lambda)-1}\cdots P_{1}$ is a reduced word of $w\in W_{B}^{n}$ and
\item[(2)] 
$P_{i}$ is a unimodal word of maximum length in $P_{i+1} P_{i}$ for $i=1,\ldots,l(\lambda)-1$,
\end{itemize} 
then $P$ is called a \emph{standard decomposition tableau} of $w$.
We denote the set of such tableaux by $\mathrm{SDT}_{w}(\lambda)$.
\end{df}

\begin{rem}
A standard decomposition tableau is also called a reduced unimodal tableau~\cite{HPS}.
\end{rem}

Let  $\boldsymbol{a}=a_{1}a_{2}\ldots a_{m}\in R(w)$ for $w\in W_{B}^{n}$.
We recursively construct a sequence of pairs of tableaux, 
\[
(\emptyset,\emptyset)=(P^{(0)},Q^{(0)}),(P^{(1)},Q^{(1)}),\ldots,(P^{(m)},Q^{(m)})=(P,Q),
\]
by the insertion algorithm due to Kra\'{s}kiewicz~\cite{Kra1,Kra2}.
The algorithm to obtain $(P^{(i)},Q^{(i)})$ from $(P^{(i-1)},Q^{(i-1)})$ is as follows:
\begin{itemize}
\item[(1)]
Let $a=a_{i}$ and $R$ be the first row of $P^{(i-1)}$.

\item[(2)]
Insert $a$ into $R$ as follows:

\begin{itemize}


\item[(i)]
If $Ra$ is unimodal ($R$ may be empty), append \framebox[12pt]{\vphantom{$i$}$a$} to $R$ and let  $P^{(i)}$ be this new tableau.
To get  $Q^{(i)}$, we add \framebox[12pt]{$i$} to $Q^{(i-1)}$ such that  $P^{(i)}$ and  $Q^{(i)}$ have the same shifted shape.
Stop.

\item[(ii)]
If $Ra$ is not unimodal, let $b$ be the smallest element in $R_{\uparrow}$, the increasing part of $R$, larger than or equal to $a$.
Such a $b$ always exists.

(iii-1): $a=0$ and $R$ contains $101$ as a subsequence.
Leave $R$ unchanged and go to (2) with $a=0$ and $R$ being the next row.

(iii-2): $b\neq a$.
Put $a$ in $b$'s position and let $c=b$.

(iii-3): $b=a$.
Leave $R_{\uparrow}$ unchanged and let $c=a+1$.

We insert $c$ into $R_{\downarrow}$, the decreasing part of $R$.
Let $d$ be the largest element in $R_{\downarrow}$ which is smaller than or equal to $c$.
Such an element always exists.

(iii-4): $d\neq c$.
Put $c$ in $d$'s place and let $a^{\prime}=d$.

(iii-5): $d=c$.
Leave $R_{\downarrow}$ unchanged and let $a^{\prime}=c-1$.

\end{itemize}

\item[(3)]
Repeat (2) with $a=a^{\prime}$ and $R$ being the next row.
\end{itemize}

The tableau $P$ (resp. $Q$) is called the insertion (resp. recording) tableau.

\begin{rem}
The above described Kra\'{s}kiewicz insertion is a $B_{n}$ (or $C_{n}$)-analogue of the Edelman-Greene insertion~\cite{EG} for the words of the symmetric group~\cite{Lam}. 
\end{rem}

\begin{ex} \label{ex:KR}
For a reduced word $012013\in R(3\Bar{2}4\Bar{1})$, we have

\setlength{\unitlength}{12pt}

\begin{center}
\begin{picture}(14,2)

\put(0,0){\makebox(2,2){$P=$}}

\put(2,1){\line(0,1){1}}
\put(3,0){\line(0,1){2}}
\put(4,0){\line(0,1){2}}
\put(5,0){\line(0,1){2}}
\put(6,1){\line(0,1){1}}
\put(3,0){\line(1,0){2}}
\put(2,1){\line(1,0){4}}
\put(2,2){\line(1,0){4}}

\put(2,1){\makebox(1,1){$2$}}
\put(3,0){\makebox(1,1){$0$}}
\put(3,1){\makebox(1,1){$0$}}
\put(4,0){\makebox(1,1){$1$}}
\put(4,1){\makebox(1,1){$1$}}
\put(5,1){\makebox(1,1){$3$}}

\put(8,0){\makebox(2,2){$Q=$}}

\put(10,1){\line(0,1){1}}
\put(11,0){\line(0,1){2}}
\put(12,0){\line(0,1){2}}
\put(13,0){\line(0,1){2}}
\put(14,1){\line(0,1){1}}
\put(11,0){\line(1,0){2}}
\put(10,1){\line(1,0){4}}
\put(10,2){\line(1,0){4}}

\put(10,1){\makebox(1,1){$1$}}
\put(11,0){\makebox(1,1){$4$}}
\put(11,1){\makebox(1,1){$2$}}
\put(12,0){\makebox(1,1){$5$}}
\put(12,1){\makebox(1,1){$3$}}
\put(13,1){\makebox(1,1){$6$}}

\end{picture}.
\end{center}
 
\end{ex}

\begin{thm}[\cite{Kra1,Kra2}]
The Kra\'{s}kiewicz insertion gives a bijection between $R(w)$ and pairs of tableaux $(P,Q)$ where 
$P\in \mathrm{SDT}_{w}(\lambda)$ and $Q\in \mathrm{ST}_{n}(\lambda)$ for $w\in W_{B}^{n}$.
\[
\mathrm{KR}:R(w)\stackrel{\sim}{\longrightarrow}
{\textstyle\bigsqcup\limits_{\lambda \in \mathcal{P}^{+}}}
\left[  \mathrm{SDT}_{w}(\lambda)\times\mathrm{ST}_{n}(\lambda)\right].
\]
\end{thm}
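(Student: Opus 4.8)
The plan is the standard two-part argument for an insertion bijection: first verify that $\mathrm{KR}$ is well defined, i.e. that for $\boldsymbol{a}\in R(w)$ the output pair $(P,Q)$ satisfies $P\in\mathrm{SDT}_{w}(\lambda)$ and $Q\in\mathrm{ST}_{n}(\lambda)$ for one common strict shape $\lambda$; then construct an explicit reverse insertion and check that it is a two-sided inverse of $\mathrm{KR}$. Everything reduces to a local analysis of a single row insertion, namely step (2) of the algorithm applied to one row.

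First I would analyze a single row insertion. Fix a unimodal word $R$ and a letter $a$; when $Ra$ is not itself unimodal, let $R'$ be the modified row and $a'$ the letter handed to the next row. The claims to establish are: (i) the elements $b\in R_{\uparrow}$ and $d\in R_{\downarrow}$ singled out in cases (iii-2)--(iii-5) always exist, which follows from the unimodality of $R$ together with the fact that $Ra$ occurs as a reduced subword during the insertion of a reduced word; (ii) $R'$ is again unimodal, and in the shifted setting the total shape grows by exactly one box per letter $a_{i}$ of $\boldsymbol{a}$; (iii) the passage $(R,a)\mapsto(R',a')$ is product-preserving and reducedness-preserving, i.e. $R'a'$ and $Ra$ represent the same element of $W_{B}^{n}$, and $R'a'$ is reduced iff $Ra$ is. Claim (iii) is the heart of the matter: it encodes the type-$B$ Coxeter--Knuth relations, with the exceptional branch (iii-1) built around the subsequence $101$ being exactly the order-four braid relation $s_{0}s_{1}s_{0}s_{1}=s_{1}s_{0}s_{1}s_{0}$, and branches (iii-2)--(iii-5) encoding the commutations and the order-three braids $s_{i}s_{i+1}s_{i}=s_{i+1}s_{i}s_{i+1}$. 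Chaining these local identities over all rows gives, by induction on $i$, that the bottom-to-top row reading word of $P^{(i)}$ is a reduced word representing $s_{a_{1}}\cdots s_{a_{i}}$, so $P=P^{(m)}$ reads a reduced word of $w$; the maximality clause in the definition of $\mathrm{SDT}_{w}(\lambda)$ then follows because each forward step places the incoming letter as far to the left as unimodality of the row permits, so $a'$ could not instead have been appended to $R$. Simultaneously $Q^{(i)}$ acquires the unique box appended to $P^{(i)}$, filled with $i$; since these boxes appear in increasing order of $i$ in positions consistent with a shifted shape, $Q^{(m)}\in\mathrm{ST}_{n}(\lambda)$.

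Next I would build the reverse map. Given $(P,Q)$ with $Q\in\mathrm{ST}_{n}(\lambda)$, locate the cell $x$ of $Q$ holding the maximal entry $|\lambda|$; it is a removable corner of $\lambda$. Beginning in the row of $x$ in $P$, reverse the insertion step: first undo (iii-4)/(iii-5) to recover the letter that had entered $R_{\downarrow}$, then undo (iii-1)/(iii-2)/(iii-3) to recover the letter that had been inserted into that row, move up one row, and repeat; the letter recovered out of the first row is $a_{m}$. Delete $x$ from $Q$ and iterate to read off $\boldsymbol{a}=a_{1}\cdots a_{m}$. The point is that in a unimodal $R'$ the provenance of the just-altered entry --- whether it came from $R_{\uparrow}$ or $R_{\downarrow}$, and which of the cases (iii-1)--(iii-5) produced it --- is unambiguously determined, so each reverse step is exactly inverse to a forward step; reducedness of $\boldsymbol{a}$ is inherited from the length bookkeeping in claim (iii). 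Mutual invertibility of $\mathrm{KR}$ and the reverse map is then immediate from this step-by-step matching.

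The main obstacle I expect is claim (iii): showing that each row-to-row passage preserves both length and the underlying Coxeter element in $W_{B}^{n}$. This is the point where the type-$B$ behaviour genuinely departs from the symmetric-group (Edelman--Greene) case --- one must track the long braid relation through the splitting of $R$ into $R_{\downarrow}$ and $R_{\uparrow}$ and confirm that the exceptional rule (iii-1) does precisely the right thing --- and it is also what dictates the unusual shape of the algorithm. A secondary but still delicate point is the maximality clause of $\mathrm{SDT}_{w}(\lambda)$: one must show $P_{i}$ is a unimodal subword of $P_{i+1}P_{i}$ of maximum length, which again comes out of the local analysis, namely that appending $a'$ to $R$ would violate unimodality.
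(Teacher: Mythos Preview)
The paper does not prove this theorem: it is stated with the citation \cite{Kra1,Kra2} and no proof is given, as it is a known result of Kra\'{s}kiewicz that the paper merely quotes as background for the primed Kra\'{s}kiewicz insertion. So there is no ``paper's own proof'' to compare your proposal against.

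That said, your outline is the standard and correct strategy for this kind of insertion bijection, and it matches the shape of Kra\'{s}kiewicz's original argument: a local analysis of one row insertion showing that the passage $(R,a)\mapsto(R',a')$ preserves the Coxeter product and reducedness (with the special handling of the $101$ pattern encoding the long braid $s_0s_1s_0s_1=s_1s_0s_1s_0$), followed by an explicit reverse bumping guided by the position of the largest entry of $Q$. Your identification of claim~(iii) as the crux is exactly right, and your remark that the maximality clause in the definition of $\mathrm{SDT}_{w}(\lambda)$ requires separate verification is also on point. If you were to flesh this out, the substantive work would all be in the case analysis for (iii-1)--(iii-5), which is lengthy but routine; nothing in your plan is wrong or missing a key idea.
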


\begin{thm}[\cite{Kra1}] \label{thm:Coxeter-Knuth}
Two reduced words in $R(w)$ are type $B$ Coxeter-Knuth related if and only if they have the same insertion tableau.
\end{thm}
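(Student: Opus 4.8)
The plan is to prove the two implications separately: the \emph{if} direction by checking a single elementary Coxeter--Knuth move, and the \emph{only if} direction by producing a canonical reduced word for each insertion tableau together with an induction that reduces every other word to it. Throughout, write $P(\boldsymbol{a})$ for the insertion tableau of $\boldsymbol{a}\in R(w)$ under the Kra\'{s}kiewicz insertion, and recall that the type $B$ Coxeter--Knuth equivalence on $R(w)$ is generated by elementary local moves on consecutive letters (the braid moves $i\,(i+1)\,i\leftrightarrow(i+1)\,i\,(i+1)$ for $i\geq 1$, the near-commutations $\cdots p\,r\,q\cdots\leftrightarrow\cdots r\,p\,q\cdots$ and $\cdots q\,p\,r\cdots\leftrightarrow\cdots q\,r\,p\cdots$ for $p<q<r$, and the moves special to the letter $0$ coming from the order-four braid relation $s_{0}s_{1}s_{0}s_{1}=s_{1}s_{0}s_{1}s_{0}$); see \cite{Kra1}.

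For the forward implication it suffices to show that if $\boldsymbol{a}=\boldsymbol{u}\,\boldsymbol{v}\,\boldsymbol{w}$ and $\boldsymbol{a}'=\boldsymbol{u}\,\boldsymbol{v}'\,\boldsymbol{w}$ differ by one elementary move $\boldsymbol{v}\leftrightarrow\boldsymbol{v}'$, then $P(\boldsymbol{a})=P(\boldsymbol{a}')$. Since the insertion processes letters left to right, the pair obtained after inserting the common prefix $\boldsymbol{u}$ is the same for both words, so it is enough to prove the core lemma: inserting $\boldsymbol{v}$ and inserting $\boldsymbol{v}'$ into an arbitrary intermediate tableau yield the \emph{same} tableau --- not merely the same shape --- after which the insertion of the common suffix $\boldsymbol{w}$ proceeds identically and the two resulting $P$'s agree. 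I would prove the core lemma by induction on the number of rows: first-row insertion of $\boldsymbol{v}$ modifies only a bounded window of the first row, leaves the same first row as first-row insertion of $\boldsymbol{v}'$, and, reading off the bumped letters in order, outputs a word that is either equal to or again related by one elementary move to the word bumped by $\boldsymbol{v}'$; the inductive hypothesis applied to the remaining rows then closes the argument. The base case (a single row) is a direct verification against the rules (iii-1)--(iii-5), and the moves involving $0$ require separate but routine checks because of the order-four braid relation. This is the type $B$ analogue of the Edelman--Greene argument \cite{EG}.

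For the backward implication, associate to each $P\in\mathrm{SDT}_{w}(\lambda)$ its row reading word $\boldsymbol{a}_{P}:=P_{l(\lambda)}P_{l(\lambda)-1}\cdots P_{1}$, which lies in $R(w)$ by condition (1) in the definition of $\mathrm{SDT}_{w}(\lambda)$. Running the insertion on $\boldsymbol{a}_{P}$ and using that each $P_{i}$ is a unimodal word of maximal length in $P_{i+1}P_{i}$ shows that inserting the block $P_{i}$ after the blocks below it appends exactly the $i$-th row; hence $\mathrm{KR}(\boldsymbol{a}_{P})=(P,Q_{0})$ with $Q_{0}$ the recording tableau that fills the rows successively from the bottom up, so in particular $P(\boldsymbol{a}_{P})=P$. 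Granting the forward implication, every Coxeter--Knuth class is contained in a single fibre of $\boldsymbol{a}\mapsto P(\boldsymbol{a})$, so it remains to show that every $\boldsymbol{a}\in R(w)$ with $P(\boldsymbol{a})=P$ is Coxeter--Knuth equivalent to $\boldsymbol{a}_{P}$. I would prove this by induction on $|\lambda|$, over all $w$ simultaneously, using the type $B$ analogue of the plactic lemma: a reduced word whose insertion tableau has first row $P_{1}$ is Coxeter--Knuth equivalent to a word of the form $\boldsymbol{b}\,P_{1}$ where $\boldsymbol{b}\in R(w')$ inserts to the tableau $(P_{2},\ldots,P_{l(\lambda)})$ of shifted shape $(\lambda_{2},\ldots,\lambda_{l(\lambda)})$ (here $w'$ is determined by $\boldsymbol{b}$, and that $\boldsymbol{b}$ is reduced of the correct shape uses the properties of $\mathrm{KR}$). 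By induction $\boldsymbol{b}\sim P_{l(\lambda)}\cdots P_{2}$, whence $\boldsymbol{a}\sim P_{l(\lambda)}\cdots P_{2}P_{1}=\boldsymbol{a}_{P}$; together with the forward implication this shows the $P$-fibre is exactly one Coxeter--Knuth class.

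The main obstacle is this last plactic-type lemma for type $B$: making explicit, at the level of Coxeter--Knuth moves, how the Kra\'{s}kiewicz insertion peels off the first row of $P$. The hook-shaped (unimodal) rows together with the special behaviour of the letter $0$ encoded in rules (iii-1), (iii-3) and (iii-5), and the presence of the order-four braid relation, make the reverse bumping substantially more delicate than in the symmetric-group case; one must also invoke the length-preservation supplied by the Kra\'{s}kiewicz bijection to guarantee that every intermediate word stays reduced. The case analysis underpinning the core lemma in the forward direction is comparatively mechanical but, for the same reasons, must be carried out with care.
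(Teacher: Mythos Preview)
The paper does not supply a proof of this theorem: it is stated with the citation \cite{Kra1} and used as a black box, so there is no argument in the present paper to compare your proposal against.

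Your sketch follows the expected route, namely the type~$B$ analogue of the Edelman--Greene argument: invariance of $P$ under each elementary Coxeter--Knuth move via a row-by-row induction on the bumped words, and then reduction of every word in a $P$-fibre to the canonical row-reading word of $P$. This is the strategy of Kra\'{s}kiewicz's original proof, so in that sense your outline is on target. Two cautionary remarks. First, your list of generators for the type~$B$ Coxeter--Knuth equivalence is schematic; the actual relations in \cite{Kra1} are more numerous than the three families you name (in particular the interaction of $0$ and $1$ produces several length-four local rules beyond a single ``order-four braid'' move), and the forward implication stands or falls on checking \emph{exactly} the correct list. Second, the ``core lemma'' you state --- that first-row insertion of $\boldsymbol v$ and of $\boldsymbol v'$ yield the same new first row and bumped sequences that are again related by at most one elementary move --- is precisely the crux; in type~$B$ the bumped sequence can differ by a move of a \emph{different} type than the input move, and in some cases no move is needed at all, so the induction must be phrased as ``related by a (possibly empty) chain of elementary moves'' rather than ``by one move''. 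With those adjustments your plan is sound, but as you yourself note, the verification is substantially more intricate than in type~$A$ and is the real content of \cite{Kra1}.
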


\begin{df} \label{df:vee}
Suppose that the entries $i,i+1,\ldots,j$ appear in $Q$ in the following manner: 
There exists some  $i\leq k\leq j$ such that:
\begin{itemize}
\item[(1)]
the entries $i,i+1,\ldots,k$ form a vertical strip,
\item[(2)]
these entries are increasing down the vertical strip,
\item[(3)]
the entries $k,k+1,\ldots,j$ form a horizontal strip,
\item[(4)]
these entries are increasing from left to right, and 
\item[(5)]
each box in the vertical strip is left of these boxes in the horizontal strip that are on the same row. 
\end{itemize}
We say that these entries form a \emph{vee} in $Q$.
\end{df}

\begin{lem}[\cite{Lam}] \label{lem:vee}
Let $\boldsymbol{a}=a_{1}a_{2}\ldots a_{m}\in R(w)$ and $\mathrm{KR}(\boldsymbol{a})=(P,Q)$.
The subword $a_{i}a_{i+1}\ldots a_{j}$ is unimodal if and only if the entries 
$i,i+1,\ldots,j$ form a vee in $Q$.
\end{lem}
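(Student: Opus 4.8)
The plan is to argue by induction on $m=|\boldsymbol{a}|$ and to reduce, at each step, to the behaviour of the single insertion of the last letter $a_m$. Since the Kra\'{s}kiewicz insertion only ever appends a cell and never relocates an existing one, for every $j\leq m-1$ both the subword $a_i\cdots a_j$ and the configuration of the entries $i,i+1,\ldots,j$ inside $Q=Q^{(m)}$ are the same as for $a_1\cdots a_{m-1}$, so those cases are immediate from the inductive hypothesis. It therefore remains to treat $j=m$, i.e.\ to prove that for every $i$ the entries $i,i+1,\ldots,m$ form a vee in $Q$ if and only if the suffix $a_i\cdots a_m$ is unimodal.

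The next observation is that each side cuts out a ``suffix interval''. A suffix of a unimodal word is again unimodal, so $\{\,i:a_i\cdots a_m\text{ is unimodal}\,\}=[i_0,m]$ for some $i_0$; and discarding from a vee on $\{i,\ldots,m\}$ with valley $k$ all labels below a given $i'$ again produces a vee (keep the valley $k$ if $i'\leq k$, take the trivial valley $i'$ otherwise), so $\{\,i:i,\ldots,m\text{ form a vee in }Q\,\}=[i_1,m]$ for some $i_1$. Both intervals contain $m$, so the lemma for $j=m$ amounts to the single equality $i_0=i_1$. Furthermore, deleting the last letter of a unimodal word, resp.\ the largest label of a vee, again yields a unimodal word, resp.\ a vee; together with the inductive hypothesis $i_0^{(m-1)}=i_1^{(m-1)}$ this forces both conditions to fail for all $i<i_0^{(m-1)}$, so only the range $i\in[i_0^{(m-1)},m]$ has to be examined.

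On the word side this range is easy to understand. There is a dichotomy at the last step: if $a_m>a_{m-1}$ the threshold is unchanged, $i_0^{(m)}=i_0^{(m-1)}$, while if $a_m<a_{m-1}$ then $a_i\cdots a_m$ is unimodal exactly when $a_i\cdots a_{m-1}$ is strictly decreasing, so $i_0^{(m)}=i_0'$, the left endpoint of the maximal strictly decreasing run $a_{i_0'}>\cdots>a_{m-1}$. The core of the proof is to show that the new cell $\beta_m$ adjoined to $Q$ at step $m$ is placed compatibly: in the first case $\beta_m$ must lengthen the current maximal vee by one further cell of its horizontal strip (so $i_1^{(m)}=i_1^{(m-1)}=i_0^{(m)}$), and in the second case $\beta_m$ must land in a strictly lower row, extending the vertical strip down to precisely the level matching $i_0'$ (so $i_1^{(m)}=i_0'=i_0^{(m)}$). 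I would establish this by running through the Kra\'{s}kiewicz bump: a cascade of bumps that terminates by appending in the first row --- which, by inspecting step~(2), happens exactly when the first row of $P^{(m-1)}$ is strictly decreasing or $a_m$ exceeds its increasing part --- contributes a horizontal-strip cell, whereas a cascade terminating by appending in a strictly lower row contributes a vertical-strip cell; one then checks that the row reached and the column of $\beta_m$ relative to the cells $\beta_{i_0^{(m-1)}},\ldots,\beta_{m-1}$ of the current vee are exactly those dictated by the word. The minimal letter $0$, where case~(iii-1) (the ``$101$'' rule) intervenes, is handled separately, but there the effect of the insertion is transparent.

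I expect this last bump analysis to be the main obstacle: through a possibly long chain of bumps with the subcases (iii-2)--(iii-5) one must control not merely the final row of $\beta_m$ but its horizontal position among the cells of the maximal vee, and verify that condition~(5) of Definition~\ref{df:vee} --- the corner cell lying in both the vertical and the horizontal strip --- is preserved. What makes the bookkeeping go through is exactly the shape of a unimodal word: a block of strictly decreasing insertions keeps the successive new cells in vertical-strip position, a block of strictly increasing insertions keeps them in horizontal-strip position, and the valley of the vee records the transition between the two regimes. An alternative would be to fix, for each insertion tableau $P\in\mathrm{SDT}_w(\lambda)$, the reading-word representative $\boldsymbol{a}_P=P_{l(\lambda)}\cdots P_1$, verify the statement for $\boldsymbol{a}_P$ directly, and then propagate it along type $B$ Coxeter--Knuth moves using Theorem~\ref{thm:Coxeter-Knuth}; but controlling the effect of each elementary move on the recording tableau appears to be no simpler than the direct argument above.
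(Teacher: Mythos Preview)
The paper does not supply a proof of this lemma: it is stated with a citation to \cite{Lam} and used as a black box. There is therefore no ``paper's own proof'' to compare against; your proposal is an attempt at an independent argument.

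As a strategy, your reduction is sound. The observation that both conditions cut out suffix intervals $[i_0,m]$ and $[i_1,m]$, together with the stability under deleting the largest label, cleanly reduces the inductive step to locating the single new cell $\beta_m$ relative to the existing maximal vee. The dichotomy on $a_m\lessgtr a_{m-1}$ is also the right one.

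That said, the proposal is by your own admission a sketch, and the part you flag as ``the main obstacle'' is exactly where all the content lies. What is needed is the type~$B$ analogue of the row-bumping lemma: if $a_{m-1}<a_m$ then $\beta_m$ is weakly above and strictly right of $\beta_{m-1}$, while if $a_{m-1}>a_m$ then $\beta_m$ is strictly below and weakly left of $\beta_{m-1}$; moreover condition~(5) of Definition~\ref{df:vee} must be checked. This requires tracking the bumping path through an arbitrary number of rows and through each of the subcases (iii-1)--(iii-5), including the special $101$ rule. Your one-line description (``terminates in the first row vs.\ a strictly lower row'') does not yet do this: the insertion of $a_m$ can terminate in any row, and what matters is the position of $\beta_m$ \emph{relative to $\beta_{m-1}$}, which is governed by the comparison of the two full bumping paths, not merely by whether the first row absorbs $a_m$. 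Until that comparison is carried out case by case, the argument is incomplete; this is precisely the technical work that \cite{Lam} performs and that the present paper is content to cite.
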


Let 
$(x_{i},y_{i}),\ldots,(x_{j},y_{j})$ be the sequence of positions of a vee $i,i+1,\ldots,j$ in $Q$.
Then, by Definition~\ref{df:vee}, there exists an index $k$ such that 
$x_{i}<\cdots<x_{k}\geq\cdots\geq x_{j}$ and $y_{i}\geq\cdots\geq y_{k}<\cdots<y_{j}$.
We call such a $k$ the \emph{bottom index}.

\begin{ex}
In Example~\ref{ex:KR}, the entries $3,4,5,6$ in $Q$ form a vee and the corresponding subword $2013$ of $w=012013$ is unimodal.
In this example, the bottom index is $2$.
\end{ex}

For a reduced word $\boldsymbol{a}\in R(w)$, a \emph{signed unimodal factorization} of $\boldsymbol{a}$ is a factorization  $\boldsymbol{A}=(s_{1}\boldsymbol{a}_{1})(s_{2}\boldsymbol{a}_{2})\cdots$, 
where $\boldsymbol{a}_{i}$ are unimodal words and $s_{i}$ are signs.
Factors $(s_{i}\boldsymbol{a}_{i})$ can be empty.
In this case, the sign is taken to be 0.
Otherwise, signs are either  $+$ or $-$.
The weight of a unimodal factorization, denoted by $\mathrm{wt}(\boldsymbol{A})$, is defined to be the vector whose $i$-th component is $\left\vert \boldsymbol{a}_{i}\right\vert$.
We denote by $U_{m}^{\pm}(w)$ the set of all signed unimodal factorizations 
with $m$ factors of reduced words of $w\in W_{B}^{n}$.

\begin{df}
A \emph{signed primed tableau} $T$ of shape $\lambda$ is a filling of $S(\lambda)$ with letters from the alphabet $X_{n}^{\prime}$ such that: 
\begin{itemize}
\item[(1)] the entries are weakly increasing along each column and each row of $T$,
\item[(2)] each row contains at most one $i^{\prime}$ for every $i=1,\ldots,n$, and 
\item[(3)] each column contains at most one $i$ for every $i=1,\ldots,n$.
\end{itemize}
We denote by $\mathrm{PT}^{\pm}_{n}(\lambda)$ the set of signed primed tableaux of shape $\lambda$. 
\end{df}

Now, let us describe the \emph{primed Kra\'{s}kiewicz insertion} introduced in \cite{HPS}.
Let $\boldsymbol{A}\in U_{m}^{\pm}(w)$ be a signed unimodal factorization with $m$ unimodal factors  
$\boldsymbol{a}_{1},\boldsymbol{a}_{2},\ldots,\boldsymbol{a}_{m}$.
We recursively construct a sequence of pairs of tableaux, 
\[
(\emptyset,\emptyset)=(P^{(0)},T^{(0)}),(P^{(1)},T^{(1)}),\ldots,(P^{(m)},T^{(m)})=(P,T),
\]
where $P^{(i)}$ is computed by the Kra\'{s}kiewicz insertion of letters of $\boldsymbol{a}_{i}$, one by one from left to right into $P^{(i-1)}$.
The tableau $T^{(i)}$ is a signed primed tableau of shape $\lambda^{(i)}$ and computed by the following algorithm:

Let $(x_{1},y_{1}),\ldots,(x_{r},y_{r})$ be the ordered sequence of positions of boxes newly appeared on $P^{(i)}$.
By Lemma~\ref{lem:vee}, there exists an index $k$ such that 
$x_{1}<\cdots<x_{k}\geq\cdots\geq x_{r}$ and $y_{1}\geq\cdots\geq y_{k}<\cdots<y_{r}$.
The tableau $T^{(i)}$ is obtained from $T^{(i-1)}$ by adding the boxes of $(x_{1},y_{1}),\ldots,(x_{k -1},y_{k -1})$ each filled with $i^{\prime}$ and the boxes $(x_{k +1},y_{k +1}),\ldots,(x_{r},y_{r})$ each filled with $i$.
If the sign of the factor $\boldsymbol{a}_{i}$ is $-$ (resp. $+$), the box $(x_{k},y_{k})$ is filled with the letter $i^{\prime}$ (resp. $i$).

As in the Kra\'{s}kiewicz insertion, we call such a $k$ the bottom index of the sequence $(x_{1},y_{1}),\ldots,(x_{r},y_{r})$ 
and say that the sequence of entries
\[
\underbrace{i^{\prime},\ldots ,i^{\prime},}_{k-1} i^{(\prime)},
\underbrace{i,\ldots ,i}_{r-k}
\]
forms a vee in $T$, where $i^{(\prime)}$ is either $i^{\prime}$ or $i$.
We call the leftmost entry in the above sequence a \emph{head} of the vee.

\begin{ex}
For a signed unimodal factorization $\boldsymbol{A}=(+01)(-2013) \in U_{2}^{\pm}(3\Bar{2}4\Bar{1})$ of the reduced word in Example~\ref{ex:KR}, we have

\setlength{\unitlength}{12pt}

\begin{center}
\begin{picture}(14,2)

\put(0,0){\makebox(2,2){$P=$}}

\put(2,1){\line(0,1){1}}
\put(3,0){\line(0,1){2}}
\put(4,0){\line(0,1){2}}
\put(5,0){\line(0,1){2}}
\put(6,1){\line(0,1){1}}
\put(3,0){\line(1,0){2}}
\put(2,1){\line(1,0){4}}
\put(2,2){\line(1,0){4}}

\put(2,1){\makebox(1,1){$2$}}
\put(3,0){\makebox(1,1){$0$}}
\put(3,1){\makebox(1,1){$0$}}
\put(4,0){\makebox(1,1){$1$}}
\put(4,1){\makebox(1,1){$1$}}
\put(5,1){\makebox(1,1){$3$}}

\put(8,0){\makebox(2,2){$T=$}}

\put(10,1){\line(0,1){1}}
\put(11,0){\line(0,1){2}}
\put(12,0){\line(0,1){2}}
\put(13,0){\line(0,1){2}}
\put(14,1){\line(0,1){1}}
\put(11,0){\line(1,0){2}}
\put(10,1){\line(1,0){4}}
\put(10,2){\line(1,0){4}}

\put(10,1){\makebox(1,1){$1$}}
\put(11,0){\makebox(1,1){$2^{\prime}$}}
\put(11,1){\makebox(1,1){$1$}}
\put(12,0){\makebox(1,1){$2$}}
\put(12,1){\makebox(1,1){$2^{\prime}$}}
\put(13,1){\makebox(1,1){$2$}}
\end{picture}.
\end{center}
The entries $2^{\prime},2^{\prime},2,2$ whose positions are $(1,3),(2,2),(2,3),(1,4)$ in this order form a vee in $T$.
The head of the vee is the entry $2^{\prime}$ at the position $(1,3)$.
\end{ex}

\begin{thm}[\cite{HPS}]
The primed Kra\'{s}kiewicz insertion gives a bijection between $U_{m}^{\pm}(w)$ and pairs of tableaux $(P,T)$ 
where $P\in \mathrm{SDT}_{w}(\lambda)$ and $T\in \mathrm{PT}_{m}^{\pm}(\lambda)$ for $w\in W_{B}^{n}$.
\begin{equation} \label{eq:primedKR}
\mathrm{KR}^{\prime}:U_{m}^{\pm}(w)\stackrel{\sim}{\longrightarrow}
{\textstyle\bigsqcup\limits_{\lambda \in \mathcal{P}^{+}}}
\left[  \mathrm{SDT}_{w}(\lambda)\times\mathrm{PT}_{m}^{\pm}(\lambda)\right].
\end{equation}
\end{thm}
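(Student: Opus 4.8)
The plan is to derive the statement from the Kra\'{s}kiewicz bijection $\mathrm{KR}$ (between $R(w)$ and pairs $(P,Q)$ with $P\in\mathrm{SDT}_{w}(\lambda)$ and $Q$ standard) together with Lam's vee criterion (Lemma~\ref{lem:vee}), by realizing $\mathrm{KR}^{\prime}$ as the refinement of $\mathrm{KR}$ in which the standard recording tableau is replaced by a signed primed tableau encoding exactly the factor boundaries and the signs. Throughout, for $\boldsymbol{A}=(s_{1}\boldsymbol{a}_{1})\cdots(s_{m}\boldsymbol{a}_{m})\in U_{m}^{\pm}(w)$ write $\boldsymbol{a}=\boldsymbol{a}_{1}\boldsymbol{a}_{2}\cdots\boldsymbol{a}_{m}\in R(w)$ for the concatenation.

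\emph{Well-definedness.} First I would check that $\mathrm{KR}^{\prime}(\boldsymbol{A})=(P,T)$ actually lands in $\mathrm{SDT}_{w}(\lambda)\times\mathrm{PT}_{m}^{\pm}(\lambda)$. The insertion tableau $P$ is literally the $\mathrm{KR}$-insertion tableau of $\boldsymbol{a}$, since the primed Kra\'{s}kiewicz insertion builds $P$ by inserting the letters of $\boldsymbol{a}_{1},\boldsymbol{a}_{2},\dots$ one at a time, which does not see the factor boundaries; so $P\in\mathrm{SDT}_{w}(\lambda)$ with $|\lambda|=l(w)$. If $\mathrm{KR}(\boldsymbol{a})=(P,Q)$, then, because $\boldsymbol{a}_{i}$ is unimodal, Lemma~\ref{lem:vee} says the entries of $Q$ carried by the $i$-th factor (those with value in the block $|\boldsymbol{a}_{1}|+\cdots+|\boldsymbol{a}_{i-1}|+1,\dots,|\boldsymbol{a}_{1}|+\cdots+|\boldsymbol{a}_{i}|$) form a vee with a canonical bottom index; the algorithm defining $T$ recolours the cells strictly before the bottom as $i^{\prime}$, those strictly after as $i$, and the cell at the bottom index as $i^{\prime}$ or $i$ according to the sign $s_{i}$ (empty factors, with $s_{i}=0$, contribute nothing). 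That $T$ satisfies the three defining conditions of a signed primed tableau is then immediate from the shape of a vee: weak increase along rows and columns holds because the factor blocks of $Q$ are nested and, within each vee, the primed branch lies weakly above-and-left of the unprimed branch; ``at most one $i^{\prime}$ per row'' holds because the primed cells of a vee lie in a vertical strip; and ``at most one $i$ per column'' holds because the unprimed cells lie in a horizontal strip. In particular $\mathrm{wt}(\boldsymbol{A})$ equals the content vector of $T$.

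\emph{Bijectivity.} Since $\mathrm{KR}$ is a bijection, it suffices to show that the passage $Q\rightsquigarrow T$ just described is invertible, i.e.\ that from an arbitrary $(P,T)\in\mathrm{SDT}_{w}(\lambda)\times\mathrm{PT}_{m}^{\pm}(\lambda)$ one can uniquely reconstruct a standard $Q$ and signs $s_{1},\dots,s_{m}$ whose recolouring is $T$, and then set $\mathrm{KR}^{\prime\,-1}(P,T)$ to be the factorization of $\mathrm{KR}^{-1}(P,Q)\in R(w)$ at the recorded places with the recorded signs. Concretely, let $\lambda^{(i)}$ be the shape of the cells of $T$ with value in $\{1^{\prime},1,\dots,i^{\prime},i\}$; the defining monotonicity of $T$ makes $\varnothing=\lambda^{(0)}\subseteq\lambda^{(1)}\subseteq\cdots\subseteq\lambda^{(m)}=\lambda$ a chain of shifted shapes, and the skew shape $\lambda^{(i)}/\lambda^{(i-1)}$ is exactly the set of cells of $T$ with value $i^{\prime}$ or $i$. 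Ordering the $i^{\prime}$-cells by increasing row and the $i$-cells by increasing column, concatenating these two lists, and filling the resulting sequence of cells with the consecutive integers $|\lambda^{(i-1)}|+1,\dots,|\lambda^{(i)}|$ for $i=1,\dots,m$ produces a candidate $Q$; the sign $s_{i}$ is read off as the prime-status of the cell at the bottom index of the $i$-th vee. One then checks $Q\in\mathrm{ST}_{|\lambda|}(\lambda)$ (the contiguity of the rows and columns of a shifted shape makes each vee-ordered block row- and column-increasing, and the nesting of the $\lambda^{(i)}$ handles the blocks relative to one another), that the $i$-th block of $Q$ forms a vee so that $\mathrm{KR}^{-1}(P,Q)$ cut at the recorded places has unimodal factors by Lemma~\ref{lem:vee}, and that recolouring this $Q$ with the $s_{i}$ returns $T$. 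Uniqueness at every step yields injectivity and surjectivity simultaneously, and the product form of the target is then automatic.

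\emph{Main obstacle.} The delicate point underlying the previous paragraph is the purely combinatorial assertion that in \emph{any} $T\in\mathrm{PT}_{m}^{\pm}(\lambda)$ the cells of value $i^{\prime}$ or $i$ genuinely form a vee with a well-defined bottom cell. Granting ``at most one $i^{\prime}$ per row'' and ``at most one $i$ per column'', one must still show that when the $i^{\prime}$-cells are sorted by row their columns are weakly decreasing, when the $i$-cells are sorted by column their rows are weakly decreasing, and that the bottom $i^{\prime}$-cell and the leftmost $i$-cell sit in the one relative position compatible with a vee (exactly one of the two possibilities occurs, which is what pins down $s_{i}$). These monotonicity and gluing facts are where the structure of shifted skew shapes must be used in earnest: each row and each column of a shifted shape is a contiguous segment and the parts of $\lambda^{(i)}$ are strictly decreasing, and it is precisely because $\mathrm{PT}_{m}^{\pm}(\lambda)$ omits the ``no primes on the main diagonal'' condition that a vee whose bottom cell lies on the diagonal may legitimately carry either sign. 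Once this lemma is established, the remaining verifications that recolouring and its inverse are mutually inverse and compatible with $\mathrm{KR}$ are routine.
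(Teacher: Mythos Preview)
The paper does not prove this theorem: it is quoted from \cite{HPS} and stated without proof, so there is no argument in the paper to compare your proposal against.

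That said, your sketch is the natural one and is essentially how the result is obtained in \cite{HPS}: factor through the ordinary Kra\'{s}kiewicz bijection $\mathrm{KR}$, use Lam's vee criterion (Lemma~\ref{lem:vee}) to see that each unimodal factor deposits its recording entries in a vee, and then observe that replacing the standard labels in each vee by $i^{\prime}$'s on the descending branch and $i$'s on the ascending branch (with the sign deciding the bottom cell) is a bijection between standard shifted tableaux equipped with a compatible block decomposition and sign data, and signed primed tableaux. Your identification of the ``main obstacle'' is accurate: the only nontrivial point in the inverse direction is that for an arbitrary $T\in\mathrm{PT}_{m}^{\pm}(\lambda)$ the cells carrying $i^{\prime}$ or $i$ really do arrange themselves into a vee in the sense of Definition~\ref{df:vee}, with a well-defined bottom cell; this uses that in a shifted shape the $i^{\prime}$-cells (one per row) have weakly decreasing columns when read top to bottom and the $i$-cells (one per column) have weakly decreasing rows when read left to right, and that the two strips meet correctly. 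Once that is checked, the mutual inverse verification is routine, as you say.
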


Since $\mathrm{PT}_{m}(\lambda)$ is a $\mathfrak{q}(m)$-crystal, so is $\mathrm{PT}_{m}^{\pm}(\lambda)$.
The Kashiwara operators $\Tilde{e}_{i}^{\pm P}$ and  $\Tilde{f}_{i}^{\pm P}$ on $T\in \mathrm{PT}_{m}^{\pm}(\lambda)$ 
($i=1,\ldots,m-1,\Bar{1}$) are given by
\begin{align}
\Tilde{e}_{i}^{\pm P}T=\mathrm{PR}\circ\Tilde{e}_{i}^{P}\circ \mathrm{DPR}(T), \label{eq:esignedP} \\
\Tilde{f}_{i}^{\pm P}T=\mathrm{PR}\circ\Tilde{f}_{i}^{P}\circ \mathrm{DPR}(T) \label{eq:fsignedP},
\end{align}
where $\mathrm{DPR}$ removes primes on the main diagonal of $T$ and $\mathrm{PR}$ recovers primes at the original positions on the main diagonal.
Let us define a \emph{prime type} of $T\in \mathrm{PT}_{m}^{\pm}(\lambda)$ as the set of indexes $i$ 
such that the entry of  $(i,i)$-position is primed.
The tableaux in $\mathrm{PT}_{m}^{\pm}(\lambda)$ of the same prime type form a connected $\mathfrak{q}(m)$-crystal.
The $\mathfrak{q}(m)$-highest (resp. lowest) weight vector in $\mathrm{PT}_{m}^{\pm}(\lambda)$ is easily obtained by the $\mathfrak{q}(m)$-highest (resp. lowest) weight vector in $\mathrm{PT}_{m}(\lambda)$ and the given prime type.

\begin{ex}
Let $m=5$, $\lambda=(5,3,1)$, and the prime type be $\{1,3\}$.
Then the $\mathfrak{q}(m)$-highest weight vector (left) and the $\mathfrak{q}(m)$-lowest weight vector (right) in  $\mathrm{PT}_{m}^{\pm}(\lambda)$ are

\setlength{\unitlength}{12pt}

\begin{center}
\begin{picture}(13,3)

\put(0,2){\line(0,1){1}}
\put(1,1){\line(0,1){2}}
\put(2,0){\line(0,1){3}}
\put(3,0){\line(0,1){3}}
\put(4,1){\line(0,1){2}}
\put(5,2){\line(0,1){1}}
\put(2,0){\line(1,0){1}}
\put(1,1){\line(1,0){3}}
\put(0,2){\line(1,0){5}}
\put(0,3){\line(1,0){5}}

\put(2,0){\makebox(1,1){$3^{\prime}$}}
\put(1,1){\makebox(1,1){$2$}}
\put(2,1){\makebox(1,1){$2$}}
\put(3,1){\makebox(1,1){$2$}}
\put(0,2){\makebox(1,1){$1^{\prime}$}}
\put(1,2){\makebox(1,1){$1$}}
\put(2,2){\makebox(1,1){$1$}}
\put(3,2){\makebox(1,1){$1$}}
\put(4,2){\makebox(1,1){$1$}}

\put(8,2){\line(0,1){1}}
\put(9,1){\line(0,1){2}}
\put(10,0){\line(0,1){3}}
\put(11,0){\line(0,1){3}}
\put(12,1){\line(0,1){2}}
\put(13,2){\line(0,1){1}}
\put(10,0){\line(1,0){1}}
\put(9,1){\line(1,0){3}}
\put(8,2){\line(1,0){5}}
\put(8,3){\line(1,0){5}}

\put(10,0){\makebox(1,1){$5^{\prime}$}}
\put(9,1){\makebox(1,1){$4$}}
\put(10,1){\makebox(1,1){$5^{\prime}$}}
\put(11,1){\makebox(1,1){$5$}}
\put(8,2){\makebox(1,1){$3^{\prime}$}}
\put(9,2){\makebox(1,1){$4^{\prime}$}}
\put(10,2){\makebox(1,1){$4$}}
\put(11,2){\makebox(1,1){$5^{\prime}$}}
\put(12,2){\makebox(1,1){$5$}}

\end{picture}.
\end{center}

\end{ex}

The set $U_{m}^{\pm}(w)$ can be endowed with the structure of $\mathfrak{q}(m)$-crystal 
through that of $\mathrm{PT}_{m}^{\pm}(\lambda)$.
That is, we have
\begin{thm} \label{thm:factorization}
The set of all signed unimodal factorizations $U_{m}^{\pm}(w)$ with $m$ factors of  reduced words of $w\in W_{B}^{n}$ 
admits a $\mathfrak{q}(m)$-crystal structure through the bijection $\mathrm{KR}^{\prime}$ (Eq.~\eqref{eq:primedKR}).
\end{thm}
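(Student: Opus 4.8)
The plan is to obtain the $\mathfrak{q}(m)$-crystal structure on $U_{m}^{\pm}(w)$ by transporting, along the bijection $\mathrm{KR}^{\prime}$ of Eq.~\eqref{eq:primedKR}, the $\mathfrak{q}(m)$-crystal structure already available on signed primed tableaux, treating the insertion tableau $P\in\mathrm{SDT}_{w}(\lambda)$ as a passive recording datum. First I would fix $w\in W_{B}^{n}$ and observe that $\mathrm{KR}^{\prime}$ identifies $U_{m}^{\pm}(w)$ with the disjoint union $\bigsqcup_{\lambda}\bigl[\mathrm{SDT}_{w}(\lambda)\times\mathrm{PT}_{m}^{\pm}(\lambda)\bigr]$; only the strict partitions $\lambda$ with $\left\vert\lambda\right\vert=l(w)$ contribute, so the union is finite. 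For each such $\lambda$ the set $\mathrm{SDT}_{w}(\lambda)\times\mathrm{PT}_{m}^{\pm}(\lambda)$ is, as a decorated set, nothing but the disjoint union of $\left\vert\mathrm{SDT}_{w}(\lambda)\right\vert$ copies of $\mathrm{PT}_{m}^{\pm}(\lambda)$, one for each $P\in\mathrm{SDT}_{w}(\lambda)$; since $\mathrm{PT}_{m}^{\pm}(\lambda)$ is a $\mathfrak{q}(m)$-crystal, so is each such product, and hence so is the disjoint union over $\lambda$.

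Next I would pull this structure back through $\mathrm{KR}^{\prime}$: for $\boldsymbol{A}\in U_{m}^{\pm}(w)$ with $\mathrm{KR}^{\prime}(\boldsymbol{A})=(P,T)$, set $\mathrm{wt}(\boldsymbol{A})=\mathrm{wt}(T)$, $\varepsilon_{i}(\boldsymbol{A})=\varepsilon_{i}(T)$, $\varphi_{i}(\boldsymbol{A})=\varphi_{i}(T)$, and, for $i\in\{1,\ldots,m-1,\Bar{1}\}$, $\Tilde{e}_{i}^{F}\boldsymbol{A}=(\mathrm{KR}^{\prime})^{-1}(P,\Tilde{e}_{i}^{\pm P}T)$ and $\Tilde{f}_{i}^{F}\boldsymbol{A}=(\mathrm{KR}^{\prime})^{-1}(P,\Tilde{f}_{i}^{\pm P}T)$, with the convention that the right-hand side is $\boldsymbol{0}$ whenever the operator on $T$ returns $\boldsymbol{0}$. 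This is well defined because $\Tilde{e}_{i}^{\pm P}$ and $\Tilde{f}_{i}^{\pm P}$ are shape-preserving (they are built from $\Tilde{e}_{i}^{P},\Tilde{f}_{i}^{P}$ by the shape-preserving $\mathrm{DPR}/\mathrm{PR}$ device of Section~\ref{sec:primed}), so $(P,\Tilde{e}_{i}^{\pm P}T)$ again lies in $\mathrm{SDT}_{w}(\lambda)\times\mathrm{PT}_{m}^{\pm}(\lambda)$ for the same $\lambda$, hence in the domain of $(\mathrm{KR}^{\prime})^{-1}$; in particular the operators never alter the factor $P$.

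I would then check Definition~\ref{df:queer} for this transported data. By construction $\mathrm{KR}^{\prime}$ carries each fibre $(\mathrm{KR}^{\prime})^{-1}(\{P\}\times\mathrm{PT}_{m}^{\pm}(\lambda))$ bijectively onto $\mathrm{PT}_{m}^{\pm}(\lambda)$, intertwining all of $\mathrm{wt},\varepsilon_{i},\varphi_{i},\Tilde{e}_{i},\Tilde{f}_{i}$; thus $U_{m}^{\pm}(w)$ is, as a decorated set, a disjoint union of copies of the $\mathfrak{q}(m)$-crystal $\mathrm{PT}_{m}^{\pm}(\lambda)$. All of conditions (1)--(5) of Definition~\ref{df:queer}, as well as those of the underlying Definition~\ref{df:crystal}, are statements about single elements or about pairs $b,b^{\prime}$, hence are preserved under such a fibrewise isomorphism, so they hold on $U_{m}^{\pm}(w)$; equivalently, $\mathrm{KR}^{\prime}$ is tautologically a strict isomorphism of $\mathfrak{q}(m)$-crystals onto its image. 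Finally I would record that the transported weight is the intrinsic one: at step $i$ of the primed Kra\'{s}kiewicz insertion the factor $\boldsymbol{a}_{i}$ adds exactly $\left\vert\boldsymbol{a}_{i}\right\vert$ boxes to $T$, each filled with $i$ or $i^{\prime}$, so the $i$-th component of $\mathrm{wt}(T)$ equals $\left\vert\boldsymbol{a}_{i}\right\vert$, which is precisely the $i$-th component of $\mathrm{wt}(\boldsymbol{A})$ for a signed unimodal factorization.

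The argument is essentially formal, so I do not expect a genuine obstacle; the single point requiring care is exactly that the Kashiwara operators act only on the signed-primed-tableau factor and leave the insertion tableau $P\in\mathrm{SDT}_{w}(\lambda)$ untouched — this is how the operators are set up above, and it rests on the shape-preservation of the operators on $\mathrm{PT}_{m}^{\pm}(\lambda)$. The substantive inputs are all already in hand: the bijection $\mathrm{KR}^{\prime}$ of Eq.~\eqref{eq:primedKR} and the $\mathfrak{q}(m)$-crystal structure on $\mathrm{PT}_{m}^{\pm}(\lambda)$ established in Section~\ref{sec:primed}. The operators $\Tilde{e}_{i}^{F},\Tilde{f}_{i}^{F}$ thus defined will then be given an explicit combinatorial description directly on factorizations in Theorems~\ref{thm:eF} and~\ref{thm:fF}.
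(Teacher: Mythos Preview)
Your proposal is correct and follows exactly the paper's approach: the paper does not give a separate proof but simply declares, just before and after the statement, that since $\mathrm{PT}_{m}^{\pm}(\lambda)$ is a $\mathfrak{q}(m)$-crystal one transports the structure along $\mathrm{KR}^{\prime}$ via $\Tilde{e}_{i}^{F}=\mathrm{KR}^{\prime\,-1}\circ(\mathrm{Id},\Tilde{e}_{i}^{\pm P})\circ\mathrm{KR}^{\prime}$ and similarly for $\Tilde{f}_{i}^{F}$. Your write-up is in fact more careful than the paper's, explicitly noting shape-preservation of the operators and the compatibility $\mathrm{wt}(\boldsymbol{A})=\mathrm{wt}(T)$, both of which the paper leaves implicit.
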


A set of connected elements in $U_{m}^{\pm}(w)$ is isomorphic to $\mathrm{PT}_{m}^{\pm}(\lambda)$ for some $\lambda$ and some prime type by this $\mathfrak{q}(m)$-isomorphism $\mathrm{KR}^{\prime}$ and all the elements in this set have the same insertion tableau $P\in \mathrm{SDT}_{w}(\lambda)$ so that these connected elements in $U_{m}^{\pm}(w)$ are the signed unimodal factorizations of type $B$ Coxeter-Knuth related reduced words that have the insertion tableau $P$ by Theorem~\ref{thm:Coxeter-Knuth}.
The Kashiwara operators on $U_{m}^{\pm}(w)$ are given by
\begin{align*}
\Tilde{e}_{i}^{F}&=\mathrm{KR}^{\prime -1}\circ (\mathrm{Id},\Tilde{e}_{i}^{\pm P}) \circ \mathrm{KR}^{\prime}, \\
\Tilde{f}_{i}^{F}&=\mathrm{KR}^{\prime -1}\circ(\mathrm{Id},\Tilde{f}_{i}^{\pm P}) \circ \mathrm{KR}^{\prime},
\end{align*}
for $i=1,\ldots, m-1,\Bar{1}$.

For example, we have $R(32\Bar{1})=\{0121,0212,2012\}$.
The signed unimodal factorizations (with $m$ factors) of the type $B$ Coxeter-Knuth related reduced words $0121$ and $0212$ form a connected $\mathfrak{q}(m)$-crystal for a fixed prime type.
Figure~\ref{fig:unimodal1} shows an example of such a $\mathfrak{q}(3)$-crystal structure.
The insertion tableau $P\in \mathrm{SDT}_{w}(\lambda)$ in $\mathrm{KR}^{\prime}$ (Eq.~\eqref{eq:primedKR}) is
\setlength{\unitlength}{12pt}

\begin{center}
\begin{picture}(3,2)

\put(0,1){\line(0,1){1}}
\put(1,0){\line(0,1){2}}
\put(2,0){\line(0,1){2}}
\put(3,1){\line(0,1){1}}
\put(1,0){\line(1,0){1}}
\put(0,1){\line(1,0){3}}
\put(0,2){\line(1,0){3}}

\put(0,1){\makebox(1,1){$2$}}
\put(1,0){\makebox(1,1){$0$}}
\put(1,1){\makebox(1,1){$1$}}
\put(2,1){\makebox(1,1){$2$}}

\end{picture},
\end{center}
i.e., $\lambda =(3,1)$.
The crystal structure of $\mathrm{PT}_{3}^{\pm}(\lambda)$ corresponding to Fig.~\ref{fig:unimodal1} is the same as in Fig.~\ref{fig:PT}, i.e., the prime type is $\emptyset$.
Another choice of the prime type gives another $\mathfrak{q}(3)$-crystal structure of the signed unimodal factorizations of $0121$ and $0212$.
The unimodal factorizations of the reduced word $2012$ also form a $\mathfrak{q}(m)$-crystal.
Figure \ref{fig:unimodal2} shows an example of such a $\mathfrak{q}(3)$-crystal structure (only the higher weight part).
The insertion tableau $P\in \mathrm{SDT}_{w}(\lambda)$ in $\mathrm{KR}^{\prime}$ (Eq.~\eqref{eq:primedKR}) is
\setlength{\unitlength}{12pt}

\begin{center}
\begin{picture}(4,1)

\put(0,0){\line(0,1){1}}
\put(1,0){\line(0,1){1}}
\put(2,0){\line(0,1){1}}
\put(3,0){\line(0,1){1}}
\put(4,0){\line(0,1){1}}
\put(0,0){\line(1,0){4}}
\put(0,1){\line(1,0){4}}

\put(0,0){\makebox(1,1){$2$}}
\put(1,0){\makebox(1,1){$0$}}
\put(2,0){\makebox(1,1){$1$}}
\put(3,0){\makebox(1,1){$2$}}

\end{picture},
\end{center}
i.e., $\lambda =(4)$ and the prime type is $\emptyset$.

\setlength{\unitlength}{10pt}

\begin{figure}
\begin{center}
\begin{picture}(33,26)

\put(1.5,11.8){\vector(0,-1){1.6}}
\put(1.5,15.8){\vector(0,-1){1.6}}

\put(7.5,11.8){\vector(0,-1){1.6}}
\put(7.5,15.8){\vector(0,-1){1.6}}
\put(7.5,19.8){\vector(0,-1){1.6}}

\put(13.5,3.8){\vector(0,-1){1.6}}
\put(13.5,7.8){\vector(0,-1){1.6}}
\put(13.3,11.8){\vector(0,-1){1.6}}
\put(13.7,11.8){\vector(0,-1){1.6}}
\put(13.5,15.8){\vector(0,-1){1.6}}
\put(13.5,19.8){\vector(0,-1){1.6}}
\put(13.5,23.8){\vector(0,-1){1.6}}

\put(19.3,7.8){\vector(0,-1){1.6}}
\put(19.7,7.8){\vector(0,-1){1.6}}
\put(19.5,11.8){\vector(0,-1){1.6}}
\put(19.5,15.8){\vector(0,-1){1.6}}
\put(19.5,19.8){\vector(0,-1){1.6}}

\put(31.5,11.8){\vector(0,-1){1.6}}
\put(31.3,15.8){\vector(0,-1){1.6}}
\put(31.7,15.8){\vector(0,-1){1.6}}

\put(3,8){\vector(3,-2){2.5}}
\put(6,12){\vector(-3,-2){2,5}}
\put(6,15.8){\vector(-3,-2){2,5}}
\put(6,16.2){\vector(-3,-2){2,5}}
\put(6,19.8){\vector(-3,-2){2,5}}
\put(6,20.2){\vector(-3,-2){2,5}}

\put(9,3.8){\vector(3,-2){2.5}}
\put(9,4.2){\vector(3,-2){2.5}}
\put(9,7.8){\vector(3,-2){2.5}}
\put(9,8.2){\vector(3,-2){2.5}}
\put(12,12){\vector(-3,-2){2,5}}
\put(12,24){\vector(-3,-2){2,5}}

\put(15,16){\vector(3,-2){2.5}}
\put(15,20){\vector(3,-2){2.5}}
\put(15,24){\vector(3,-2){2.5}}

\put(21,16){\vector(3,-2){2.5}}
\put(24,12){\vector(-3,-2){2,5}}

\put(27,12){\vector(3,-2){2.5}}

\put(0.5,10.5){\makebox(1,1){\scriptsize $2$}}
\put(0.5,14.5){\makebox(1,1){\scriptsize $2$}}

\put(6.5,10.5){\makebox(1,1){\scriptsize $1$}}
\put(7.5,14.5){\makebox(1,1){\scriptsize $2$}}
\put(7.5,18.5){\makebox(1,1){\scriptsize $2$}}

\put(13.5,2.5){\makebox(1,1){\scriptsize $2$}}
\put(13.5,6.5){\makebox(1,1){\scriptsize $2$}}
\put(12.3,10.5){\makebox(1,1){\scriptsize $1$}}
\put(13.7,10.5){\makebox(1,1){\scriptsize $\Bar{1}$}}
\put(13.5,14.5){\makebox(1,1){\scriptsize $1$}}
\put(13.5,18.5){\makebox(1,1){\scriptsize $1$}}
\put(13.5,22.5){\makebox(1,1){\scriptsize $2$}}

\put(18.3,6.5){\makebox(1,1){\scriptsize $1$}}
\put(19.7,6.5){\makebox(1,1){\scriptsize $\Bar{1}$}}
\put(18.5,10.5){\makebox(1,1){\scriptsize $2$}}
\put(19.5,14.5){\makebox(1,1){\scriptsize $1$}}
\put(19.5,18.5){\makebox(1,1){\scriptsize $2$}}

\put(31.5,10.5){\makebox(1,1){\scriptsize $2$}}
\put(30.3,14.5){\makebox(1,1){\scriptsize $1$}}
\put(31.7,14.5){\makebox(1,1){\scriptsize $\Bar{1}$}}

\put(3.5,5.8){\makebox(1,1){\scriptsize $2$}}
\put(4.5,9.8){\makebox(1,1){\scriptsize $\Bar{1}$}}
\put(4.5,13.6){\makebox(1,1){\scriptsize $\Bar{1}$}}
\put(4.5,15.7){\makebox(1,1){\scriptsize $1$}}
\put(4.5,17.6){\makebox(1,1){\scriptsize $\Bar{1}$}}
\put(4.5,19.7){\makebox(1,1){\scriptsize $1$}}

\put(9.5,1.8){\makebox(1,1){\scriptsize $\Bar{1}$}}
\put(9.5,3.8){\makebox(1,1){\scriptsize $1$}}
\put(9.5,5.8){\makebox(1,1){\scriptsize $\Bar{1}$}}
\put(9.5,7.8){\makebox(1,1){\scriptsize $1$}}
\put(10,11.5){\makebox(1,1){\scriptsize $2$}}
\put(10,23.5){\makebox(1,1){\scriptsize $1$}}

\put(16,15.5){\makebox(1,1){\scriptsize $\Bar{1}$}}
\put(16,19.5){\makebox(1,1){\scriptsize $\Bar{1}$}}
\put(16,23.5){\makebox(1,1){\scriptsize $\Bar{1}$}}

\put(22.5,9.7){\makebox(1,1){\scriptsize $1$}}
\put(22,15.5){\makebox(1,1){\scriptsize $2$}}

\put(28,11.5){\makebox(1,1){\scriptsize $\Bar{1}$}}

\put(0,8){\makebox(3,2){\footnotesize $(+0)(-2)(+12)$}}
\put(0,12){\makebox(3,2){\footnotesize $(+0)(+21)(+2)$}}
\put(0,16){\makebox(3,2){\footnotesize $(+0)(+212)()$}}

\put(6,4){\makebox(3,2){\footnotesize $(+0)()(+212)$}}
\put(6,8){\makebox(3,2){\footnotesize $(+0)(+2)(+12)$}}
\put(6,12){\makebox(3,2){\footnotesize $(+02)()(+12)$}}
\put(6,16){\makebox(3,2){\footnotesize $(+02)(+1)(+2)$}}
\put(6,20){\makebox(3,2){\footnotesize $(+02)(+12)()$}}

\put(12,0){\makebox(3,2){\footnotesize $()(+0)(+212)$}}
\put(12,4){\makebox(3,2){\footnotesize $()(+02)(+12)$}}
\put(12,8){\makebox(3,2){\footnotesize $()(+012)(+1)$}}
\put(12,12){\makebox(3,2){\footnotesize $(+0)(+12)(+1)$}}
\put(12,16){\makebox(3,2){\footnotesize $(+01)(+2)(+1)$}}
\put(12,20){\makebox(3,2){\footnotesize $(+012)()(+1)$}}
\put(12,24){\makebox(3,2){\footnotesize $(+012)(+1)()$}}

\put(18,4){\makebox(3,2){\footnotesize $()(+01)(+21)$}}
\put(18,8){\makebox(3,2){\footnotesize $(+0)(+1)(+21)$}}
\put(18,12){\makebox(3,2){\footnotesize $(+0)(-12)(+1)$}}
\put(18,16){\makebox(3,2){\footnotesize $(+01)(-2)(+1)$}}
\put(18,20){\makebox(3,2){\footnotesize $(+01)(+21)()$}}

\put(24,12){\makebox(3,2){\footnotesize $(+01)()(+21)$}}

\put(30,8){\makebox(3,2){\footnotesize $(+0)(-1)(+21)$}}
\put(30,12){\makebox(3,2){\footnotesize $(+0)(+21)(-2)$}}
\put(30,16){\makebox(3,2){\footnotesize $(+02)(+1)(-2)$}}

\end{picture}
\end{center}
\caption{An example of $\mathfrak{q}(3)$-crystal structure of signed unimodal factorizations of the type $B$ Coxeter-Knuth related reduced words $0121$ and $0212$.}
 \label{fig:unimodal1}
\end{figure}
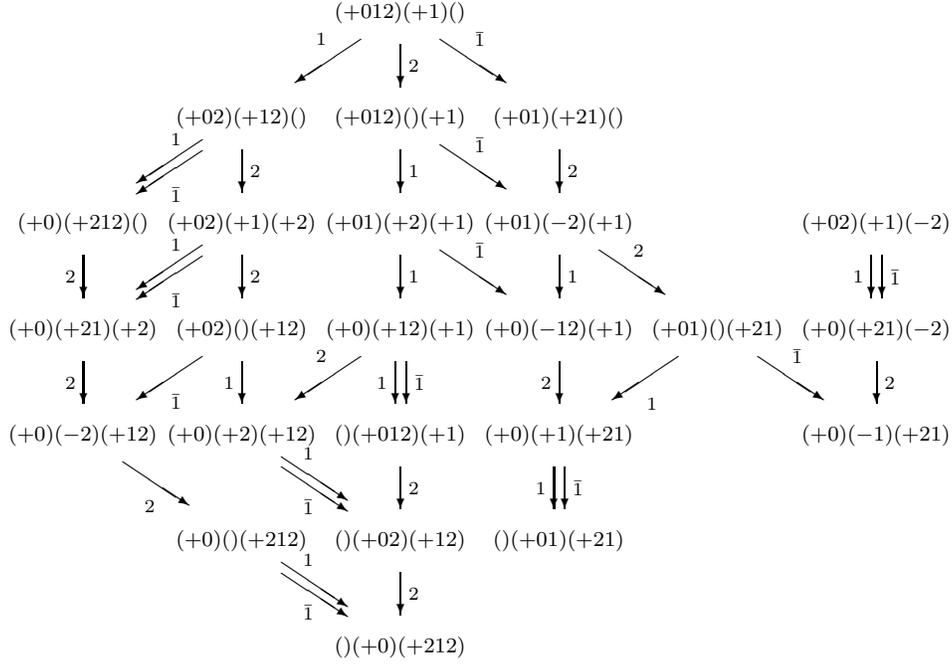

\setlength{\unitlength}{10pt}

\begin{figure}
\begin{center}
\begin{picture}(26.5,12)

\put(18.5,5.8){\vector(0,-1){1.6}}

\put(5,6){\vector(-3,-2){2,5}}
\put(11,10){\vector(-3,-2){2,5}}
\put(17,6){\vector(-3,-2){2,5}}

\put(8,6){\vector(3,-2){2.5}}
\put(14,10){\vector(3,-2){2.5}}
\put(20,6){\vector(3,-2){2,5}}

\put(18.5,4.5){\makebox(1,1){\scriptsize $1$}}

\put(3,5.5){\makebox(1,1){\scriptsize $2$}}

\put(9,5.5){\makebox(1,1){\scriptsize $1$}}
\put(9,9.5){\makebox(1,1){\scriptsize $\Bar{1}$}}

\put(15,5.5){\makebox(1,1){\scriptsize $\Bar{1}$}}
\put(15,9.5){\makebox(1,1){\scriptsize $1$}}

\put(21,5.5){\makebox(1,1){\scriptsize $2$}}

\put(0,2){\makebox(3,2){\footnotesize $(+201)()(-2)$}}

\put(5,6){\makebox(3,2){\footnotesize $(+201)(-2)()$}}

\put(10.5,2){\makebox(3,2){\footnotesize $(+20)(-12)()$}}
\put(11,10){\makebox(3,2){\footnotesize $(+2012)()()$}}

\put(17,2){\makebox(3,2){\footnotesize $(+20)(+12)()$}}
\put(17,6){\makebox(3,2){\footnotesize $(+201)(+2)()$}}

\put(23.5,2){\makebox(3,2){\footnotesize $(+201)()(+2)$}}

\put(1,0){\makebox(1,1){\vdots}}
\put(12,0){\makebox(1,1){\vdots}}
\put(18,0){\makebox(1,1){\vdots}}
\put(24,0){\makebox(1,1){\vdots}}

\end{picture}
\end{center}
\caption{An example of $\mathfrak{q}$(3)-crystal structure of signed unimodal factorizations of the reduced word $2012$.
The total number of vertices is 33.}
 \label{fig:unimodal2}
\end{figure}
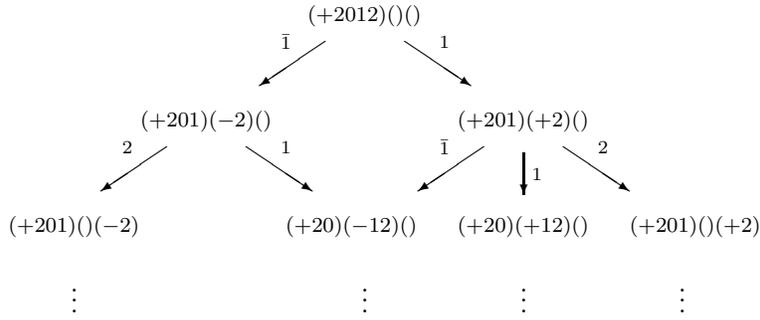

As for the action of odd Kashiwara operators 
$\Tilde{e}_{\Bar{1}}^{F}$ and $\Tilde{f}_{\Bar{1}}^{F}$ on $\boldsymbol{A}$, 
we can construct the explicit algorithms as claimed by the following two theorems (Theorem~\ref{thm:eF} and \ref{thm:fF}).

\begin{thm} \label{thm:eF}
Let $\boldsymbol{A}=(s_{1}\boldsymbol{a}_{1})(s_{2}\boldsymbol{a}_{2})\cdots\in U_{m}^{\pm}(w)$ be a signed unimodal factorization with the unimodal word $\boldsymbol{a}_{i}$ and its sign ($i=1,2,\ldots,m$) for $w\in W_{B}^{n}$.
The action of the odd Kashiwara operator $\Tilde{e}_{\Bar{1}}^{F}$ on $\boldsymbol{A}$ is given by the following rule:

$\Tilde{e}_{\Bar{1}}^{F}$ always changes the first two factors if 
$\Tilde{e}_{\Bar{1}}^{F}\boldsymbol{A}\neq \boldsymbol{0}$.
\begin{itemize}
\item[(1):]
$\boldsymbol{a}_{1}\boldsymbol{a}_{2}$ is not unimodal.
In this case,
\[
\Tilde{e}_{\Bar{1}}^{F}\boldsymbol{A}=(s_{1}\boldsymbol{a}_{1}^{\prime})(s_{2}\boldsymbol{a}_{2}^{\prime})\cdots,
\]
where $\boldsymbol{a}_{2}^{\prime}$ is obtained by dropping the first letter of $\boldsymbol{a}_{2}$ and 
$\boldsymbol{a}_{1}^{\prime}$ is the word obtained by appending that letter to $\boldsymbol{a}_{1}$.
Here, $\boldsymbol{a}_{1}^{\prime}$ is assumed to be unimodal.
Otherwise, $\Tilde{e}_{\Bar{1}}^{F}\boldsymbol{A}=\boldsymbol{0}$.

\item[(2):]
$\boldsymbol{a}_{1}\boldsymbol{a}_{2}$ is unimodal.
In this case, 
If $\boldsymbol{a}_{2}=\emptyset$ or $s_{1}\neq0$ and $s_{2}=+$, 
then $\Tilde{e}_{\Bar{1}}^{F}\boldsymbol{A}=\boldsymbol{0}$.
Otherwise,
\[
\Tilde{e}_{\Bar{1}}^{F}\boldsymbol{A}=
(s_{1}^{\prime}\boldsymbol{a}_{1}^{\prime})(s_{2}^{\prime}\boldsymbol{a}_{2}^{\prime})\cdots,
\]
where $\boldsymbol{a}_{1}^{\prime}$ and $\boldsymbol{a}_{2}^{\prime}$ are the same as in (1) 
and signs $s_{1}^{\prime}$ and $s_{2}^{\prime}$ are given by the following rule:

When $\boldsymbol{a}_{1}=\emptyset$, 
\begin{equation*}
(s_{1}^{\prime},s_{2}^{\prime})=
\begin{cases}
(s_{2},0), & \left\vert \boldsymbol{a}_{2}\right\vert =1, \\
(s_{2},+), & \left\vert \boldsymbol{a}_{2}\right\vert \geq2.
\end{cases}
\end{equation*}

When $\boldsymbol{a}_{1}\neq\emptyset$,
\begin{equation*}
(s_{1}^{\prime},s_{2}^{\prime})=
\begin{cases}
(s_{1},0), & \left\vert \boldsymbol{a}_{2}\right\vert =1, \\
(s_{1},+), & \left\vert \boldsymbol{a}_{2}\right\vert \geq2.
\end{cases}
\end{equation*}
\end{itemize}

\end{thm}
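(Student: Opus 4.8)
\emph{Sketch of the intended argument.} The plan is to read off $\tilde{e}_{\bar{1}}^{F}$ from its definition $\tilde{e}_{\bar{1}}^{F}=\mathrm{KR}^{\prime -1}\circ(\mathrm{Id},\tilde{e}_{\bar{1}}^{\pm P})\circ\mathrm{KR}^{\prime}$, feeding in the explicit rule for $\tilde{e}_{\bar{1}}^{\pm P}$ that comes from Lemma~\ref{lem:eP} together with $\tilde{e}_{\bar{1}}^{\pm P}=\mathrm{PR}\circ\tilde{e}_{\bar{1}}^{P}\circ\mathrm{DPR}$ (Eq.~\eqref{eq:esignedP}). Write $\mathrm{KR}^{\prime}(\boldsymbol{A})=(P,T)$. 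The crucial structural remark is that, after $\mathrm{DPR}$, the operator $\tilde{e}_{\bar{1}}^{P}$ changes a single box of the \emph{first row} of $T$, turning a $2$ at $(1,1)$ into $1$, or turning a $2^{\prime}$ at $(1,i)$ with $i\ge 2$ into $1$, while $\mathrm{PR}$ merely reinstates the diagonal prime; since the boxes of $T$ carrying labels $\ge 3$ and the tableau $P$ are left untouched, running the reverse primed Kra\'skiewicz insertion from the top shows that $\tilde{e}_{\bar{1}}^{F}$ can affect only the first two factors $(s_{1}\boldsymbol{a}_{1})(s_{2}\boldsymbol{a}_{2})$. Thus the whole problem reduces to understanding how the pair $(P^{(2)},T^{(2)})$ encodes those two factors.

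I would isolate this in a two-factor insertion lemma. Inserting the unimodal word $\boldsymbol{a}_{1}$ into the empty tableau deposits it, letter by letter, entirely in the first row (every prefix of a unimodal word is unimodal, so each letter is appended), so the first row of $T^{(1)}$ is $1^{(\prime)}1\cdots 1$ of length $|\boldsymbol{a}_{1}|$, the first entry primed iff $s_{1}=-$. Inserting $\boldsymbol{a}_{2}$ into $P^{(1)}=[\boldsymbol{a}_{1}]$ then appends $\boldsymbol{a}_{2}$ to the first row exactly when $\boldsymbol{a}_{1}\boldsymbol{a}_{2}$ is unimodal (this is case~(2)); otherwise (case~(1)) the newly created boxes meet the first row in at most one box, a single box at column $|\boldsymbol{a}_{1}|+1$ whose $P$-content is the first letter $\ell$ of $\boldsymbol{a}_{2}$, which is the head of the vee (Lemma~\ref{lem:vee}) of factor~$2$, carries label $2^{\prime}$, and is present if and only if $\boldsymbol{a}_{1}\ell$ is unimodal. (This last assertion is the technical core: it requires following the first couple of letters of $\boldsymbol{a}_{2}$ through the insertion, using that a single-letter insertion appends to row~$1$ precisely when it keeps that row unimodal, and a short unimodality argument showing that in case~(1) no second letter of $\boldsymbol{a}_{2}$ can be appended.) From this one reads off the first row of $T=T^{(2)}$ in both cases, and the stated vanishing conditions follow at once: $\tilde{e}_{\bar{1}}^{\pm P}T=\boldsymbol{0}$ exactly when that row contains neither a $2$ at $(1,1)$ nor a $2^{\prime}$ off the corner, i.e.\ exactly when $\boldsymbol{a}_{1}\boldsymbol{a}_{2}$ is unimodal together with ($\boldsymbol{a}_{2}=\emptyset$ or ($s_{1}\neq 0$ and $s_{2}=+$)), or $\boldsymbol{a}_{1}\boldsymbol{a}_{2}$ is not unimodal together with $\boldsymbol{a}_{1}\ell$ not unimodal.

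When $\tilde{e}_{\bar{1}}^{\pm P}T\neq\boldsymbol{0}$ I would avoid inverting $\mathrm{KR}^{\prime}$ explicitly and instead argue forward: set $\boldsymbol{A}^{\prime}=(s_{1}^{\prime}\boldsymbol{a}_{1}^{\prime})(s_{2}^{\prime}\boldsymbol{a}_{2}^{\prime})(s_{3}\boldsymbol{a}_{3})\cdots$ with $\boldsymbol{a}_{1}^{\prime}=\boldsymbol{a}_{1}\ell$, with $\boldsymbol{a}_{2}^{\prime}$ equal to $\boldsymbol{a}_{2}$ with its first letter removed, and with signs as in the statement, and verify $\mathrm{KR}^{\prime}(\boldsymbol{A}^{\prime})=(P,\tilde{e}_{\bar{1}}^{\pm P}T)$. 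Since $\boldsymbol{A}$ and $\boldsymbol{A}^{\prime}$ have the same underlying reduced word, they share the insertion tableau $P$; applying the two-factor lemma to $\boldsymbol{A}^{\prime}$ — which lies in the same case~(1)/(2) as $\boldsymbol{A}$ because $\boldsymbol{a}_{1}\boldsymbol{a}_{2}$ is unchanged — shows that its recording tableau agrees with $T$ away from the first row and has first row exactly the one produced from $T$ by the relabelling $2^{(\prime)}\mapsto 1^{(\prime)}$ dictated by $\tilde{e}_{\bar{1}}^{P}$. The signs are then read from the (possibly relocated) bottom-index boxes of the two vees: the bottom index of factor~$1$ stays at $(1,1)$, so $s_{1}^{\prime}=s_{1}$ when $\boldsymbol{a}_{1}\neq\emptyset$ and $s_{1}^{\prime}=s_{2}$ when $\boldsymbol{a}_{1}=\emptyset$ (the relabelled corner box inherits its prime), while the bottom index of factor~$2$ becomes the next box of row~$1$, an unprimed $2$, whence $s_{2}^{\prime}=+$, except when $\boldsymbol{a}_{2}^{\prime}=\emptyset$, where $s_{2}^{\prime}=0$; in case~(1) the relabelled box is neither bottom-index box, so both bottom indices — hence both signs — are unchanged, giving $s_{i}^{\prime}=s_{i}$. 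This yields $\mathrm{KR}^{\prime}(\boldsymbol{A}^{\prime})=(P,\tilde{e}_{\bar{1}}^{\pm P}T)$, hence $\tilde{e}_{\bar{1}}^{F}\boldsymbol{A}=\boldsymbol{A}^{\prime}$, which is the assertion. The main obstacle is the two-factor insertion lemma — pinning down exactly where the insertion of $\boldsymbol{a}_{2}$ into $[\boldsymbol{a}_{1}]$ lands in the first row, and in particular identifying the head of the factor-$2$ vee with the leftmost $2$ or $2^{\prime}$ of that row; once this is in hand, everything else is bookkeeping over the cases.
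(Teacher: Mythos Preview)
Your approach is the paper's: compute $(P,T)=\mathrm{KR}'(\boldsymbol{A})$, determine the first-row structure of $T$ from the two-factor insertion, match it against the rule for $\tilde e_{\bar 1}^{\pm P}$ coming from Lemma~\ref{lem:eP}, and read off the new first two factors. The paper argues backward (compute $T'=\tilde e_{\bar 1}^{\pm P}T$ and reconstruct $\boldsymbol{A}'$ from it), while you argue forward (guess $\boldsymbol{A}'$ and verify $\mathrm{KR}'(\boldsymbol{A}')=(P,T')$); these are equivalent.

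One point in your two-factor lemma is wrong as stated and needs to be repaired. In case~(1) it is \emph{not} true that the factor-$2$ boxes meet the first row in at most one box: already in Example~\ref{ex:KR}/4.4 of the paper, with $\boldsymbol{a}_{1}=01$ and $\boldsymbol{a}_{2}=2013$, the factor-$2$ vee sits at $(1,3),(2,2),(2,3),(1,4)$, hitting row~$1$ twice. What is true --- and what your forward verification actually needs --- is that at most one of these row-$1$ boxes carries a \emph{primed} label~$2'$, namely the head at $(1,|\boldsymbol{a}_{1}|+1)$ when it is present; any further row-$1$ boxes lie beyond the bottom index and therefore get unprimed~$2$'s. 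This follows from the vee inequalities $x_{1}^{(2)}<\cdots<x_{k^{(2)}}^{(2)}$: if $x_{1}^{(2)}=1$ then $x_{2}^{(2)},\ldots,x_{k^{(2)}}^{(2)}\ge 2$, so position~$1$ is the only one among $1,\ldots,k^{(2)}$ that can lie in row~$1$. Correspondingly, your assertion that ``no second letter of $\boldsymbol{a}_{2}$ can be appended'' should be replaced by ``$k^{(2)}\ge 2$, hence the head is primed'': this is exactly how the paper argues, noting that $k^{(2)}=1$ would force all factor-$2$ boxes into row~$1$ and make $\boldsymbol{a}_{1}\boldsymbol{a}_{2}$ unimodal. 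With this correction the rest of your bookkeeping (signs via bottom-index boxes, vanishing conditions) matches the paper's case analysis.
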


\begin{thm} \label{thm:fF}
Let $\boldsymbol{A}=(s_{1}\boldsymbol{a}_{1})(s_{2}\boldsymbol{a}_{2})\cdots\in U_{m}^{\pm}(w)$ be a signed unimodal factorization with the unimodal word $\boldsymbol{a}_{i}$ and its sign ($i=1,2,\ldots,m$) for $w\in W_{B}^{n}$.
The action of the odd Kashiwara operator $\Tilde{f}_{\Bar{1}}^{F}$ on $\boldsymbol{A}$ is given by the following rule:

$\Tilde{f}_{\Bar{1}}^{F}$ always change the first two factors if 
$\Tilde{f}_{\Bar{1}}^{F}\boldsymbol{A}\neq \boldsymbol{0}$
.\begin{itemize}
\item[(1):]
$\boldsymbol{a}_{1}\boldsymbol{a}_{2}$ is not unimodal.
\[
\Tilde{f}_{\Bar{1}}^{F}\boldsymbol{A}=(s_{1}\boldsymbol{a}%
_{1}^{\prime})(s_{2}\boldsymbol{a}_{2}^{\prime})\cdots,
\]
where $\boldsymbol{a}_{1}^{\prime}$ is obtained by dropping the last letter of $\boldsymbol{a}_{1}$ and 
$\boldsymbol{a}_{2}^{\prime}$ is the word obtained by prepending that letter to $\boldsymbol{a}_{2}$.
Here, $\boldsymbol{a}_{2}^{\prime}$ is assumed to be unimodal.
Otherwise, $\Tilde{f}_{\Bar{1}}^{F}\boldsymbol{A}=\boldsymbol{0}$.

\item[(2):]
$\boldsymbol{a}_{1}\boldsymbol{a}_{2}$ is unimodal.
If $\boldsymbol{a}_{2}\neq\emptyset$ and $s_{2}=-$, 
then $\Tilde{f}_{\Bar{1}}^{F}\boldsymbol{A}=\boldsymbol{0}$.
Otherwise,
\[
\Tilde{f}_{\Bar{1}}^{F}\boldsymbol{A}=
(s_{1}^{\prime}\boldsymbol{a}_{1}^{\prime})(s_{2}^{\prime}\boldsymbol{a}_{2}^{\prime})\cdots,
\]
where $\boldsymbol{a}_{1}^{\prime}$ and $\boldsymbol{a}_{2}^{\prime}$ are the same as in (1) 
and signs $s_{1}^{\prime}$ and $s_{2}^{\prime}$ are given by the following rule:
\begin{equation*}
(s_{1}^{\prime},s_{2}^{\prime})=
\begin{cases}
(0,s_{1}), & \left\vert \boldsymbol{a}_{1}\right\vert =1, \\
(s_{1},-), & \left\vert \boldsymbol{a}_{1}\right\vert \geq2.
\end{cases}
\end{equation*}

\end{itemize}

\end{thm}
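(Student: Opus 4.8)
The plan is to compute $\Tilde{f}_{\Bar{1}}^{F}$ straight from the definition $\Tilde{f}_{\Bar{1}}^{F}=\mathrm{KR}^{\prime -1}\circ(\mathrm{Id},\Tilde{f}_{\Bar{1}}^{\pm P})\circ\mathrm{KR}^{\prime}$, combined with Eq.~\eqref{eq:fsignedP} and the explicit rule for $\Tilde{f}_{\Bar{1}}^{P}$ in Lemma~\ref{lem:fP}. Since $\Tilde{f}_{\Bar{1}}^{P}$ changes only the first row of a primed tableau, the proof reduces to two things: first, reading off the first row of the signed primed tableau $T$ in $\mathrm{KR}^{\prime}(\boldsymbol{A})=(P,T)$ in terms of the first two factors $(s_{1}\boldsymbol{a}_{1})(s_{2}\boldsymbol{a}_{2})$ of $\boldsymbol{A}$; and second, translating the one-cell modification prescribed by Lemma~\ref{lem:fP} back through $\mathrm{KR}^{\prime -1}$ into the factor surgery in the statement. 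So the first step is to set up this dictionary between the first row of $T$ and the first two factors of $\boldsymbol{A}$.

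For the dictionary I would use two elementary facts about the Kra\'{s}kiewicz insertion. First, inserting the unimodal word $\boldsymbol{a}_{1}$ into the empty tableau leaves every prefix unimodal, so each letter simply appends and $P^{(1)}$ is a single row equal to $\boldsymbol{a}_{1}$; hence the cells of $T$ recording the first factor are exactly $(1,1),\dots,(1,\left\vert\boldsymbol{a}_{1}\right\vert)$, with $(1,1)$ equal to $1^{\prime}$ or $1$ according as $s_{1}=-$ or $+$ and all other such cells equal to unprimed $1$ (the bottom index of this vee being $1$). Second, inserting the letters of $\boldsymbol{a}_{2}$ continues the Kra\'{s}kiewicz insertion of the word $\boldsymbol{a}_{1}\boldsymbol{a}_{2}$; by Lemma~\ref{lem:vee} and Definition~\ref{df:vee} (the factor-$1$ cells already forming a horizontal strip in row $1$), the cells of $T$ recording the second factor all lie in the first row \emph{precisely when} $\boldsymbol{a}_{1}\boldsymbol{a}_{2}$ is unimodal, and in that case the leftmost of them, at $(1,\left\vert\boldsymbol{a}_{1}\right\vert+1)$, is $2^{\prime}$ or $2$ according as $s_{2}=-$ or $+$. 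In particular, once $\boldsymbol{a}_{1}\neq\emptyset$ the rightmost unprimed $1$ in the first row of $\mathrm{DPR}(T)$ sits at $(1,\left\vert\boldsymbol{a}_{1}\right\vert)$, and the content of the cell immediately to its right is governed exactly by whether $\boldsymbol{a}_{1}\boldsymbol{a}_{2}$ is unimodal, by $s_{2}$, and by the following factor.

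It remains to close the correspondence. The reduced word underlying $\boldsymbol{A}$ and the one underlying the candidate $\Tilde{f}_{\Bar{1}}^{F}\boldsymbol{A}$ coincide — only the first factor boundary moves — so by Theorem~\ref{thm:Coxeter-Knuth} the insertion tableau $P$, and $P^{(j)}$ for all $j\geq 2$, are unchanged, and the recording tableau changes only at $(1,\left\vert\boldsymbol{a}_{1}\right\vert)$: this is the position at which the dropped last letter $a$ of $\boldsymbol{a}_{1}$ re-enters during the insertion of $\boldsymbol{a}_{2}^{\prime}=a\boldsymbol{a}_{2}$ (it appends there, since $\boldsymbol{a}_{1}^{\prime}a=\boldsymbol{a}_{1}$ is unimodal), so that cell passes from colour $1$ to colour $2$, the signs then being recomputed from the bottom indices of the shortened colour-$1$ vee and the lengthened colour-$2$ vee. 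Matching this recolouring with Lemma~\ref{lem:fP} gives the theorem: when $\left\vert\boldsymbol{a}_{1}\right\vert=1$ we are in case~(1) of the lemma and the $\mathrm{DPR}/\mathrm{PR}$ pair on the diagonal cell $(1,1)$ produces $(s_{1}^{\prime},s_{2}^{\prime})=(0,s_{1})$, while for $\left\vert\boldsymbol{a}_{1}\right\vert\geq 2$ we are in case~(2), the changed cell becoming $2^{\prime}$, which — being the new bottom-index box of the colour-$2$ vee when $\boldsymbol{a}_{1}\boldsymbol{a}_{2}$ is unimodal, and a pre-bottom-index box of it otherwise — forces $s_{2}^{\prime}=-$ in the unimodal case and leaves $s_{2}^{\prime}=s_{2}$ in the non-unimodal case, with $s_{1}^{\prime}=s_{1}$ throughout; the split ``$\boldsymbol{a}_{1}\boldsymbol{a}_{2}$ unimodal or not'' is exactly the split of whether the colour-$2$ cells of $T$ are confined to the first row. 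Finally $\Tilde{f}_{\Bar{1}}^{\pm P}T=\boldsymbol{0}$ holds precisely when $\boldsymbol{a}_{1}=\emptyset$ or the cell to the right of $(1,\left\vert\boldsymbol{a}_{1}\right\vert)$ in $\mathrm{DPR}(T)$ is $2^{\prime}$, and by the dictionary the latter is equivalent to ``$\boldsymbol{a}_{2}^{\prime}$ is not unimodal'' in the non-unimodal case and to ``$\boldsymbol{a}_{2}\neq\emptyset$ and $s_{2}=-$'' in the unimodal case. I expect the main obstacle to be this last equivalence — showing that the tableau obstruction ``the neighbouring cell is $2^{\prime}$'' coincides with the word obstruction ``$a\boldsymbol{a}_{2}$ fails to be unimodal'' — together with the attendant bottom-index bookkeeping that forces the asymmetric sign $s_{2}^{\prime}=-$ when $\left\vert\boldsymbol{a}_{1}\right\vert\geq 2$.
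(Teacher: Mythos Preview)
Your approach is essentially the same as the paper's. The paper does not write out a separate proof of this theorem; it proves Theorem~\ref{thm:eF} in detail and then says ``The proof of Theorem~\ref{thm:fF} is similar.'' Your sketch is exactly the $\Tilde{f}$-analogue of that argument: read off the first row of $T$ from the first two factors via Lemma~\ref{lem:vee}, apply Lemma~\ref{lem:fP}, and track the bottom indices of the colour-$1$ and colour-$2$ vees to recover the signs. The one place where you flag uncertainty --- the equivalence between ``the cell to the right of $(1,\left\vert\boldsymbol{a}_{1}\right\vert)$ is $2^{\prime}$'' and the word-level obstruction --- is precisely the step that, in the paper's proof of Theorem~\ref{thm:eF}, is handled by the explicit configuration analysis (showing that in the non-unimodal case $k^{(2)}\geq 2$ forces the head of the colour-$2$ vee to be primed, while the failure of $\boldsymbol{a}_{1}^{\prime}$ to be unimodal pushes the first colour-$2$ cell off row $1$); the $\Tilde{f}$-version needs the mirror of that argument, and your bottom-index bookkeeping is the right tool for it.
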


We give the proof of Theorem~\ref{thm:eF} only.
The proof of Theorem~\ref{thm:fF} is similar.

\begin{proof}[Proof of Theorem~\ref{thm:eF}]

Let $T\in \mathrm{PT}_{m}^{\pm}(\lambda)$ be a signed primed tableau such that  $\mathrm{KR}^{\prime}(\boldsymbol{A})=(P,T)$, 
where $P \in \mathrm{SDT}_{w}(\lambda)$.
Let us write $\boldsymbol{a}_{1}=u_{1}^{(1)}\cdots u_{r}^{(1)}$ and $\boldsymbol{a}_{2}=u_{1}^{(2)}\cdots u_{s}^{(2)}$,
Suppose that $T^{\prime}=\Tilde{e}_{\Bar{1}}^{\pm P}T\neq\boldsymbol{0}$.
Namely, we have the following three cases:
\begin{itemize}
\item[(i)]
$T_{1,1}=2^{\prime}$, $T_{1,1}^{\prime}=1^{\prime}$, and $T_{i,j}=T_{i,j}^{\prime}$ for $(i,j)\neq (1,1)$.
\item[(ii)]
$T_{1,1}=2$, $T_{1,1}^{\prime}=1$, and $T_{i,j}=T_{i,j}^{\prime}$ for $(i,j)\neq (1,1)$.
\item[(iii)]
$T_{1,k}=2^{\prime}$, $T_{1,k}^{\prime}=1$ ($\exists ! k\geq2$), and $T_{i,j}=T_{i,j}^{\prime}$ for $(i,j)\neq (1,k)$.
In this case, the entry $2^{\prime}$ does not appear in the first row of $T$ except at $(1,k)$.
\end{itemize}
Let $\boldsymbol{A}^{\prime}=(s_{1}\boldsymbol{a}_{1}^{\prime})(s_{2}\boldsymbol{a}_{2}^{\prime})\cdots\in U_{m}^{\pm}(w)$ be 
a signed unimodal factorization such that $\mathrm{KR}^{\prime}(\boldsymbol{A}^{\prime})=(P,T^{\prime})$.

In Case (i), since neither $1^{\prime}$ nor $1$ appear in $T$, the first factor of $\boldsymbol{A}$ is empty; $(s_{1}\boldsymbol{a}_{1})=()$.
The entry $T_{1,1}=2^{\prime}$ appears first when the letter $u_{1}^{(2)}$ is inserted and 
neither $1^{\prime}$ nor $1$ appear at the position except at $(1,1)$ in $T^{\prime}$ so that 
$\boldsymbol{a}_{1}^{\prime}=u_{1}^{(2)}$ and $\boldsymbol{a}_{2}^{\prime}=u_{2}^{(2)}\ldots u_{s}^{(2)}$.
Only the first two factors of $\boldsymbol{A}$ change under the action of 
$\Tilde{e}_{\Bar{1}}^{F}$ 
if $\Tilde{e}_{\Bar{1}}^{F}\boldsymbol{A}\neq \boldsymbol{0}$.

In Case (ii), we have that 
$\boldsymbol{a}_{1}^{\prime}=u_{1}^{(2)}$ and $\boldsymbol{a}_{2}^{\prime}=u_{2}^{(2)}\cdots u_{s}^{(2)}$ and that 
only the first two factors of $\boldsymbol{A}$ change under the action of 
$\Tilde{e}_{\Bar{1}}^{F}$ 
if $\Tilde{e}_{\Bar{1}}^{F}\boldsymbol{A}\neq \boldsymbol{0}$ as in Case (i).

In Case (iii), the primed entry $T_{1,k}=2^{\prime}$ is the head of the vee starting from $2^{\prime}$ in $T$.
Otherwise, there exists an entry $2^{\prime}$ in $T$ at the position $(x_{1},y_{1})\neq (1,k)$ with $x_{1}<1$ and $y_{1}\geq k$, which is impossible.
Since $T_{1,k}=2^{\prime}$ is changed to $T_{1,k}^{\prime}=1$, we have that 
$\boldsymbol{a}_{1}^{\prime}=\boldsymbol{a}_{1}u_{1}^{(2)}$ and $\boldsymbol{a}_{2}^{\prime}=u_{2}^{(2)}\cdots u_{s}^{(2)}$ and that 
only the first two factors of $\boldsymbol{A}$ change under the action of 
$\Tilde{e}_{\Bar{1}}^{F}$ 
if $\Tilde{e}_{\Bar{1}}^{F}\boldsymbol{A}\neq \boldsymbol{0}$.

We first claim that $\Tilde{e}_{\Bar{1}}^{F}\boldsymbol{A}= \boldsymbol{0}$ if 
$\boldsymbol{a}_{1}^{\prime}$ is not unimodal.
Note that $\boldsymbol{a}_{2}^{\prime}$ is always unimodal.
This is verified as follows.
Suppose that $\boldsymbol{a}_{1}^{\prime}$ is not unimodal.
Then  $\boldsymbol{a}_{1}$ consists of at least two letters and $T$ has the following configuration.
(Note that $\boldsymbol{a}_{1}$ is unimodal).

\setlength{\unitlength}{12pt}

\begin{center}
\begin{picture}(7,4)

\put(0,2){\line(0,1){1}}
\put(1,1){\line(0,1){2}}
\put(2,1){\line(0,1){2}}
\put(4,2){\line(0,1){1}}
\put(5,2){\line(0,1){1}}
\put(1,1){\line(1,0){1}}
\put(0,2){\line(1,0){7}}
\put(0,3){\line(1,0){7}}

\put(0,2){\makebox(1,1){$x$}}
\put(1,1){\makebox(1,1){$y$}}
\put(1,2){\makebox(1,1){$1$}}
\put(2,2){\makebox(2,1){$\cdots$}}
\put(4,2){\makebox(1,1){$1$}}
\put(5,2){\makebox(2,1){$\cdots$}}

\put(2,0){\makebox(1,1){$\ddots$}}
\put(4,3){\makebox(1,1){\small $r$}}

\end{picture},
\end{center}
where $r\geq 2$, the entry $x$ is either $1^{\prime}$ or $1$, and the entry $y$, which appears when the letter $u_{1}^{(2)}$ is inserted, is either $2^{\prime}$ or $2$.
The entry $y$ does not appear in the first row; otherwise $\boldsymbol{a}_{1}^{\prime}$ would be unimodal.
When $\left\vert \boldsymbol{a}_{2}\right\vert =1$, the entry $2^{\prime}$ does not appear in the first row of $T$ 
so that $\Tilde{e}_{\Bar{1}}^{\pm P}T=\boldsymbol{0}$ by Lemma~\ref{lem:eP} with Eq.~\eqref{eq:esignedP} and therefore $\Tilde{e}_{\Bar{1}}^{F}\boldsymbol{A}=\boldsymbol{0}$.
When $\left\vert \boldsymbol{a}_{2}\right\vert \geq2$, suppose that the entry $z$ ($z$ is either $2^{\prime}$ or $2$) appears in the first row of $T$:

\setlength{\unitlength}{12pt}

\begin{center}
\begin{picture}(8,4)

\put(0,2){\line(0,1){1}}
\put(1,1){\line(0,1){2}}
\put(2,1){\line(0,1){2}}
\put(4,2){\line(0,1){1}}
\put(5,2){\line(0,1){1}}
\put(6,2){\line(0,1){1}}
\put(1,1){\line(1,0){1}}
\put(0,2){\line(1,0){8}}
\put(0,3){\line(1,0){8}}

\put(0,2){\makebox(1,1){$x$}}
\put(1,1){\makebox(1,1){$y$}}
\put(1,2){\makebox(1,1){$1$}}
\put(2,2){\makebox(2,1){$\cdots$}}
\put(4,2){\makebox(1,1){$1$}}
\put(5,2){\makebox(1,1){$z$}}
\put(6,2){\makebox(2,1){$\cdots$}}

\put(2,0){\makebox(1,1){$\ddots$}}
\put(4,3){\makebox(1,1){\small $r$}}

\end{picture}.
\end{center}
The entry $z$ appears after $y$ appears in the course of primed Kra\'{s}kiewicz insertion of $\boldsymbol{a}_{2}$.
The position of $y$ is $(2,2)$ and that of $z$ is $(1,r+1)$.
Let $k^{(2)}$ be the bottom index of the sequence of boxes which appear in the insertion of $\boldsymbol{a}_{2}$.
Since $2\geq 1$ and $2<r+1$, $k^{(2)}=1$ so that $z=2$ (not $2^{\prime}$).
Therefore, $\Tilde{e}_{\Bar{1}}^{\pm P}T=\boldsymbol{0}$ so that 
$\Tilde{e}_{\Bar{1}}^{F}\boldsymbol{A}=\boldsymbol{0}$.

Next, we verify the rule of signs.
We assume that $\boldsymbol{a}_{1}^{\prime}$ is unimodal.
Let 
\[
(x_{1}^{(1)},y_{1}^{(1)}),\ldots,(x_{r}^{(1)},y_{r}^{(1)})
\] 
be the ordered sequence of the positions of boxes which appear in the insertion of the unimodal word $\boldsymbol{a}_{1}$ with the bottom index $k^{(1)}$.
Similarly, let 
\[
(x_{1}^{(2)},y_{1}^{(2)}),\ldots,(x_{s}^{(2)},y_{s}^{(2)})
\]
be the ordered sequence of the positions of boxes which appear in the insertion of the unimodal word $\boldsymbol{a}_{2}$ with the bottom index $k^{(2)}$.
Let $k^{(1)\prime}$ (resp. $k^{(2)\prime}$) be the bottom index of the sequence of positions of boxes which appear in the insertion of the unimodal word $\boldsymbol{a}_{1}^{\prime}$ (resp. $\boldsymbol{a}_{2}^{\prime}$).

(1): $\boldsymbol{a}_{1}\boldsymbol{a}_{2}$ is not unimodal.
In this case, $\boldsymbol{a}_{1}\neq\emptyset$ and $\boldsymbol{a}_{2}\neq\emptyset$.
Since $\boldsymbol{a}_{1}^{\prime}$ is unimodal, we have 
$1=x_{1}^{(1)}=\cdots=x_{r}^{(1)}=x_{1}^{(2)}$ and 
$y_{1}^{(1)}<\cdots<y_{r}^{(1)}<y_{1}^{(2)}$ 
so that $k^{(1)}=1$ and $k^{(1)\prime}=1$.
If the bottom index $k^{(2)}=1$, then $1=x_{1}^{(2)}=\cdots=x_{s}^{(2)}$ and $y_{1}^{(2)}<\cdots<y_{s}^{(2)}$ so that
\[
1=x_{1}^{(1)}=\cdots=x_{r}^{(1)}=x_{1}^{(2)}=\cdots =x_{s}^{(2)}
\]
and
\[
y_{1}^{(1)}<\cdots<y_{r}^{(1)}<y_{1}^{(2)}<\cdots<y_{s}^{(2)}.
\]
This implies that the sequence of entries 
$\underbrace{1^{(\prime)},\ldots ,1,}_{r} \underbrace{2^{(\prime)},\ldots ,2}_{s}$ forms a vee in $T$, 
where $i^{(\prime)}$ is either $i^{\prime}$ or $i$ ($i=1,2$).
Consequently, 
$\boldsymbol{a}_{1}\boldsymbol{a}_{2}$ is unimodal, which contradicts the assumption of (1).
Hence, $k^{(2)}\geq2$.
Therefore, the entry at the position $(x_{1}^{(2)},y_{1}^{(2)})=(1,y_{1}^{(2)})$ in $T$ is $2^{\prime}$, 
which is head of the vee starting from $2^{\prime}$ in $T$ and is the only $2^{\prime}$ in the first row of $T$.
The operator $\Tilde{e}_{\Bar{1}}^{\pm P}$ changes this $2^{\prime}$ to $1$ and leaves other entries unchanged. 
Here, $s_{1}^{\prime}=s_{1}$ and $s_{2}^{\prime}=s_{2}$.
This can be shown as follows.
Since $\boldsymbol{a}_{1}^{\prime}$ is unimodal, the configuration of the first row of $T$ is

\setlength{\unitlength}{12pt}

\begin{center}
\begin{picture}(8,2)

\put(0,0){\line(0,1){1}}
\put(1,0){\line(0,1){1}}
\put(2,0){\line(0,1){1}}
\put(4,0){\line(0,1){1}}
\put(5,0){\line(0,1){1}}
\put(6,0){\line(0,1){1}}
\put(0,0){\line(1,0){8}}
\put(0,1){\line(1,0){8}}

\put(0,0){\makebox(1,1){$x$}}
\put(1,0){\makebox(1,1){$1$}}
\put(2,0){\makebox(2,1){$\cdots$}}
\put(4,0){\makebox(1,1){$1$}}
\put(5,0){\makebox(1,1){$2^{\prime}$}}
\put(6,0){\makebox(2,1){$\cdots$}}

\put(5,1){\makebox(1,1){\small $r+1$}}
\end{picture},
\end{center}
where $r\geq 1$ and $x$ is either $1^{\prime}$ or $1$ so that the first row of $T^{\prime}$ has the following configuration.
\setlength{\unitlength}{12pt}

\begin{center}
\begin{picture}(8,2)

\put(0,0){\line(0,1){1}}
\put(1,0){\line(0,1){1}}
\put(2,0){\line(0,1){1}}
\put(4,0){\line(0,1){1}}
\put(5,0){\line(0,1){1}}
\put(6,0){\line(0,1){1}}
\put(0,0){\line(1,0){8}}
\put(0,1){\line(1,0){8}}

\put(0,0){\makebox(1,1){$x$}}
\put(1,0){\makebox(1,1){$1$}}
\put(2,0){\makebox(2,1){$\cdots$}}
\put(4,0){\makebox(1,1){$1$}}
\put(5,0){\makebox(1,1){$1$}}
\put(6,0){\makebox(2,1){$\cdots$}}

\put(5,1){\makebox(1,1){\small $r+1$}}
\end{picture}.
\end{center}
Then, we have that $s_{1}^{\prime}=s_{1}$; otherwise $T_{1,1}^{\prime}\neq x$ because $k^{(1)}=1$ and $k^{(1)\prime}=1$.
By noting $k^{(2)}\geq2$, we also have that $s_{2}^{\prime}=s_{2}$ because the entries $2^{\prime}$ and $2$ at the same positions except at $(1,r+1)$ in $T$ and $T^{\prime}$ coincide.

(2):$\boldsymbol{a}_{1}\boldsymbol{a}_{2}$ is unimodal.
In this case, 
\[
1=x_{1}^{(1)}=\cdots=x_{r}^{(1)}=x_{1}^{(2)}=\cdots=x_{s}^{(2)}
\]
and
\[
y_{1}^{(1)}<\cdots<y_{r}^{(1)}<y_{1}^{(2)}\cdots<y_{s}^{(2)}
\]
so that $k^{(2)}=1$.
If $\boldsymbol{a}_{1}\neq\emptyset$ and $s_{2}=+$, then the configuration of the first row of $T$ is
\setlength{\unitlength}{12pt}

\begin{center}
\begin{picture}(7,2)

\put(0,0){\line(0,1){1}}
\put(1,0){\line(0,1){1}}
\put(3,0){\line(0,1){1}}
\put(4,0){\line(0,1){1}}
\put(5,0){\line(0,1){1}}
\put(0,0){\line(1,0){7}}
\put(0,1){\line(1,0){7}}

\put(0,0){\makebox(1,1){$x$}}
\put(1,0){\makebox(2,1){$\cdots$}}
\put(3,0){\makebox(1,1){$1$}}
\put(4,0){\makebox(1,1){$2$}}
\put(5,0){\makebox(2,1){$\cdots$}}

\put(3,1){\makebox(1,1){\small $r$}}
\end{picture},
\end{center}
where $r\geq 1$ and $x$ is either $1^{\prime}$ or $1$.
The entry $2$ in the above configuration is unprimed because $k^{(2)}=1$ and $s_{2}=+$.
Hence, $\Tilde{e}_{\Bar{1}}^{\pm P}T=\boldsymbol{0}$ so that 
$\Tilde{e}_{\Bar{1}}^{F}\boldsymbol{A}=\boldsymbol{0}$.
In the following, we assume that $\boldsymbol{a}_{1}=\emptyset$ or $s_{2}\neq+$.

\begin{itemize}
\item[(i):]
$\boldsymbol{a}_{1}=\emptyset$.

\begin{itemize}
\item[(i-1):]
$\left\vert \boldsymbol{a}_{2}\right\vert =1$.
In this case, $T_{1,1}$ is either $2^{\prime}$ or $2$.
If $T_{1,1}=2^{\prime}$, then $s_{2}=-$ and $T_{1,1}^{\prime}=1^{\prime}$, i.e., $s_{1}^{\prime}=-$.
On the other hand, if $T_{1,1}=2$, then $s_{2}=+$ and $T_{1,1}^{\prime}=1$, i.e., $s_{1}^{\prime}=+$.
In both cases, $s_{1}^{\prime}=s_{2}$.

\item[(i-2):]
$\left\vert \boldsymbol{a}_{2}\right\vert \geq2$.
As in (i-1), we have $s_{1}^{\prime}=s_{2}$.
Since
$1=x_{1}^{(2)}=\cdots=x_{s}^{(2)}$ and $y_{1}^{(2)}<\cdots<y_{s}^{(2)}$,
$k^{(2)}=1$.
We claim that $s_{2}^{\prime}=+$.
In the case when $s_{2}=+$, the configuration of the first row of $T$ is
\setlength{\unitlength}{12pt}

\begin{center}
\begin{picture}(4,1)

\put(0,0){\line(0,1){1}}
\put(1,0){\line(0,1){1}}
\put(2,0){\line(0,1){1}}
\put(0,0){\line(1,0){4}}
\put(0,1){\line(1,0){4}}

\put(0,0){\makebox(1,1){$2$}}
\put(1,0){\makebox(1,1){$2$}}
\put(2,0){\makebox(2,1){$\cdots$}}

\end{picture}.
\end{center}

If $s_{2}^{\prime}=-$, then the configuration of $T^{\prime}$ is
\setlength{\unitlength}{12pt}

\begin{center}
\begin{picture}(4,1)

\put(0,0){\line(0,1){1}}
\put(1,0){\line(0,1){1}}
\put(2,0){\line(0,1){1}}
\put(0,0){\line(1,0){4}}
\put(0,1){\line(1,0){4}}

\put(0,0){\makebox(1,1){$1$}}
\put(1,0){\makebox(1,1){$2^{\prime}$}}
\put(2,0){\makebox(2,1){$\cdots$}}

\end{picture}.
\end{center}
Note that $s_{1}^{\prime}=+$ so $T_{1,1}^{\prime}=1$.
This contradicts the previous configuration because the configuration of the first row of $\Tilde{e}_{\Bar{1}}^{\pm P}T$ is
\setlength{\unitlength}{12pt}

\begin{center}
\begin{picture}(4,1)

\put(0,0){\line(0,1){1}}
\put(1,0){\line(0,1){1}}
\put(2,0){\line(0,1){1}}
\put(0,0){\line(1,0){4}}
\put(0,1){\line(1,0){4}}

\put(0,0){\makebox(1,1){$1$}}
\put(1,0){\makebox(1,1){$2$}}
\put(2,0){\makebox(2,1){$\cdots$}}

\end{picture}.
\end{center}

In the case when $s_{2}=-$, the configuration of the first row of $T$ is
\setlength{\unitlength}{12pt}

\begin{center}
\begin{picture}(4,1)

\put(0,0){\line(0,1){1}}
\put(1,0){\line(0,1){1}}
\put(2,0){\line(0,1){1}}
\put(0,0){\line(1,0){4}}
\put(0,1){\line(1,0){4}}

\put(0,0){\makebox(1,1){$2^{\prime}$}}
\put(1,0){\makebox(1,1){$2$}}
\put(2,0){\makebox(2,1){$\cdots$}}

\end{picture}.
\end{center}
If $s_{2}^{\prime}=-$, then the configuration of the first row of $T^{\prime}$ is
\setlength{\unitlength}{12pt}

\begin{center}
\begin{picture}(4,1)

\put(0,0){\line(0,1){1}}
\put(1,0){\line(0,1){1}}
\put(2,0){\line(0,1){1}}
\put(0,0){\line(1,0){4}}
\put(0,1){\line(1,0){4}}

\put(0,0){\makebox(1,1){$1^{\prime}$}}
\put(1,0){\makebox(1,1){$2^{\prime}$}}
\put(2,0){\makebox(2,1){$\cdots$}}

\end{picture}.
\end{center}
Note that $s_{1}^{\prime}=-$ so $T_{1,1}^{\prime}=1^{\prime}$.
This contradicts the previous configuration because the configuration of the first row of $\Tilde{e}_{\Bar{1}}^{\pm P}T$ is
\setlength{\unitlength}{12pt}

\begin{center}
\begin{picture}(4,1)

\put(0,0){\line(0,1){1}}
\put(1,0){\line(0,1){1}}
\put(2,0){\line(0,1){1}}
\put(0,0){\line(1,0){4}}
\put(0,1){\line(1,0){4}}

\put(0,0){\makebox(1,1){$1^{\prime}$}}
\put(1,0){\makebox(1,1){$2$}}
\put(2,0){\makebox(2,1){$\cdots$}}

\end{picture}.
\end{center}

In both cases, we have that $s_{2}^{\prime}=+$.
\end{itemize}

\item[(ii):]
$\boldsymbol{a}_{1}\neq\emptyset$.
In this case, $s_{2}=-$.
It is clear that $s_{1}^{\prime}=s_{1}$ and $s_{2}^{\prime}=0$ when $\left\vert \boldsymbol{a}_{2}\right\vert =1$.
When $\left\vert \boldsymbol{a}_{2}\right\vert \geq2$, it is also clear that $s_{1}^{\prime}=s_{1}$.
We claim that $s_{2}^{\prime}=+$.
Since $s_{2}=-$ and $k^{(2)}=1$, the configuration of the first row of $T$ is
\setlength{\unitlength}{12pt}

\begin{center}
\begin{picture}(9,2)

\put(0,0){\line(0,1){1}}
\put(1,0){\line(0,1){1}}
\put(2,0){\line(0,1){1}}
\put(4,0){\line(0,1){1}}
\put(5,0){\line(0,1){1}}
\put(6,0){\line(0,1){1}}
\put(7,0){\line(0,1){1}}
\put(0,0){\line(1,0){9}}
\put(0,1){\line(1,0){9}}

\put(0,0){\makebox(1,1){$x$}}
\put(1,0){\makebox(1,1){$1$}}
\put(2,0){\makebox(2,1){$\cdots$}}
\put(4,0){\makebox(1,1){$1$}}
\put(5,0){\makebox(1,1){$2^{\prime}$}}
\put(6,0){\makebox(1,1){$2$}}
\put(7,0){\makebox(2,1){$\cdots$}}

\put(5,1){\makebox(1,1){\small $r+1$}}
\end{picture},
\end{center}
where $r\geq 1$ and $x$ is either $1^{\prime}$ or $1$.
If $s_{2}^{\prime}=-$, then the configuration of the first row of $T^{\prime}$ is
\setlength{\unitlength}{12pt}

\begin{center}
\begin{picture}(8,2)

\put(0,0){\line(0,1){1}}
\put(1,0){\line(0,1){1}}
\put(2,0){\line(0,1){1}}
\put(4,0){\line(0,1){1}}
\put(5,0){\line(0,1){1}}
\put(6,0){\line(0,1){1}}
\put(0,0){\line(1,0){8}}
\put(0,1){\line(1,0){8}}

\put(0,0){\makebox(1,1){$x$}}
\put(1,0){\makebox(1,1){$1$}}
\put(2,0){\makebox(2,1){$\cdots$}}
\put(4,0){\makebox(1,1){$1$}}
\put(5,0){\makebox(1,1){$2^{\prime}$}}
\put(6,0){\makebox(2,1){$\cdots$}}

\put(4,1){\makebox(1,1){\small $r+1$}}
\end{picture}.
\end{center}
This contradicts the previous configuration because the configuration of the first row of $\Tilde{e}_{\Bar{1}}^{\pm P}T$ is
\setlength{\unitlength}{12pt}

\begin{center}
\begin{picture}(8,2)

\put(0,0){\line(0,1){1}}
\put(1,0){\line(0,1){1}}
\put(2,0){\line(0,1){1}}
\put(4,0){\line(0,1){1}}
\put(5,0){\line(0,1){1}}
\put(6,0){\line(0,1){1}}
\put(0,0){\line(1,0){8}}
\put(0,1){\line(1,0){8}}

\put(0,0){\makebox(1,1){$x$}}
\put(1,0){\makebox(1,1){$1$}}
\put(2,0){\makebox(2,1){$\cdots$}}
\put(4,0){\makebox(1,1){$1$}}
\put(5,0){\makebox(1,1){$2$}}
\put(6,0){\makebox(2,1){$\cdots$}}

\put(4,1){\makebox(1,1){\small $r+1$}}
\end{picture}.
\end{center}
Consequently, we have that $s_{2}^{\prime}=+$.
\end{itemize}

The verification of the rule of Theorem~\ref{thm:eF} is now complete. 
\end{proof}

\subsection*{Acknowledgements}
The author would like to express his gratitude to Professor Susumu Ariki for suggesting the problem and helpful discussions.
He is also grateful to Professor Sami Assaf for kindly pointing out reference~\cite{AO1,AO2} and an anonymous referee for pointing out numerous errors, which was of great help to improve this paper.

\end{document}